 \definecolor{col1}{rgb}{0.7, 0.86, 0.96}
\definecolor{col2}{rgb}{0.95, 0.95, 0.65}
\definecolor{col3}{rgb}{0.8, 0.9, 0.5}
\definecolor{col4}{rgb}{0.91,0.94, 0.53}
\definecolor{col5}{rgb}{0.98,0.99,0.6}
\definecolor{texted}{rgb}{0.1, 0.6, 0.3}
\numberwithin{equation}{section}
\newtheorem{thm}{Theorem}[section]
\newtheorem{cor}[thm]{Corollary}
\newtheorem{lem}[thm]{Lemma}
\newtheorem{prop}[thm]{Proposition}
\newtheorem{defn}[thm]{Definition}
\newtheorem{notation}[thm]{Notation}
\newcommand{\la}{\lambda}
\newcommand{\tO}{\tilde{\Omega}}
\newcommand{\BeG}{\begin{align*}}
\newcommand{\EnD}{\end{align*}}
\newcommand{\bae}{\begin{align}}
\newcommand{\eae}{\end{align}}
\newcommand{\Q}{\mathbb{Q}}
\renewcommand{\P}{\mathbb{P}}
\newcommand{\F}{\mathcal{F}}
\newcommand{\ind}{\mathbbm{1}}
\newcommand{\bp}{\begin{proof}}
\newcommand{\ep}{\end{proof}}
\def\bal#1\eal{\begin{align*}#1\end{align*}}
\newcommand{\Zc}{\mathcal{Z}}
\renewcommand{\S}{\mathcal{S}}
\newcommand{\tF}{\tilde{\mathcal{F}}}
\DeclareMathOperator{\Leb}{Leb}
\title[]{The coalescent structure of uniform and Poisson samples from multitype branching processes}
\author[S.G.G. Johnston]{Samuel G. G. Johnston}
\address{Samuel G. G. Johnston: Department of Mathematical Sciences, University of Bath, United Kingdom.}
\email{sgj22@bath.ac.uk}
\author[A. Lambert]{Amaury Lambert}
\address{Amaury Lambert: Laboratoire de Probabilit\'es, Statistique \& Mod\'elisation,
Sorbonne Universit\'e, France.} \email{amaury.lambert@sorbonne.universite.fr}
\keywords{Uniform sampling, continuous-state branching processes, coalescent processes, $\Lambda$-coalescents, Poissonization}
\subjclass[2010]{Primary: 60G09, 60J80 Secondary: 60G55.}
\begin{document}
\maketitle

\begin{abstract}
We introduce a Poissonization method to study the coalescent structure of uniform samples from branching processes. This method relies on the simple observation that a uniform sample of size $k$ taken from a random set with positive Lebesgue measure may be represented as a mixture of Poisson samples with rate $\lambda$ and mixing measure $k \mathrm{d} \lambda/ \lambda$. We develop a multitype analogue of this mixture representation, and use it to characterise the coalescent structure of multitype continuous-state branching processes in terms of random multitype forests. Thereafter we study the small time asymptotics of these random forests, establishing a correspondence between multitype continuous-state branching proesses and multitype $\Lambda$-coalescents. 
\end{abstract}

\section{Introduction}

\subsection{Poissonization} \label{sec pois}
Suppose under a probability measure $P$ we have a branching process with continuous states. Consider the following problem:
\begin{quote}
Under an extension $\P^{k,T}$ of $P$,  sample uniformly and independently $k$ members of the population at time $T$. What can we say about the ancestors of the sampled  particles? What does their genealogical tree look like? What spatial position or type did these ancestors have?
\end{quote}
It turns out the problem of describing the ancestral tree of the $k$ uniformly chosen particles is difficult to attack directly, and it is useful to take an indirect approach. Consider instead the alternative problem:
\begin{quote}
Under a different extension $\Q^{\lambda,T}$ of $P$, we sample particles from the time $T$ population according to a Poisson process of rate $\lambda$. What can we say about the ancestors of these sampled particles?
\end{quote}
The behaviour of the ancestors of sampled particles is far easier to describe under $\Q^{\lambda,T}$ than it is under $\P^{k,T}$, due to the fact that under $\Q^{\lambda,T}$ the Poissonian structure of the sampling in some sense agrees with the branching structure of the process. 
Indeed, we will see later that under $\Q^{\lambda,T}$, the ancestors of sampled particles living at earlier times $t < T$ themselves occur within the time $t$ population according to a Poisson process.

The central idea of this paper is an integral formula allowing us to tackle the former fixed sample size problem involving $\mathbb{P}^{k,T}$ in terms of the latter Poissonian problem involving $\mathbb{Q}^{\lambda,T}$. To this end, let $\mathcal{S}$ be the number of particles sampled (so that under $\P^{k,T}$ we have $\P^{k,T} ( \mathcal{S} = k) = 1$, but under $\Q^{\lambda,T}$, the size $\mathcal{S}$ of the sample is random). It is clear that for events $A$, the map
\begin{align*}
A \mapsto \Q^{\lambda,T}( A, \mathcal{S} = k)
\end{align*}
is a measure supported on the event $\{\S = k\}$. Theorem \ref{po thm} is a somewhat informal statement of the one-dimensional special case of our main Poissonization result, Theorem \ref{thm:po c}, stating that $\P^{k,T}$ may be realised as a mixture of the measures $\mathbb{Q}( \cdot , \mathcal{S} = k )$.

\begin{thm} \label{po thm}
For any event $A$ which depends on the process and the ancestry of the sampled particles, we have
\begin{align} \label{po eq}
\P^{k,T} (A )= \int_0^\infty \frac{ k  \mathrm{d} \lambda}{ \lambda} \Q^{\lambda,T} \left( A, \mathcal{S} = k \right).
\end{align}
\end{thm}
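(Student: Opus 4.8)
The plan is to prove the mixture identity (\ref{po eq}) by verifying it on a generating class of events and invoking the key structural fact—promised in the introduction—that under $\Q^{\lambda,T}$ the sample size $\S$ has a Poisson distribution whose mean is proportional to $\lambda$. Write the paper's informal prose (uniform sampling from a set of positive Lebesgue measure represented as a Poisson mixture) into a concrete statement: conditionally on the underlying branching process and its ancestry (call this $\sigma$-algebra $\G$), the quantity being sampled from is some random ``mass'' $M$ (the time-$T$ population measure), and under $\Q^{\lambda,T}$ the sample is a Poisson process of rate $\lambda$ driven by $M$. Thus, conditionally on $\G$, the sample size $\S$ is Poisson with parameter $\lambda M$, and $\P^{k,T}$ amounts to conditioning this Poisson sampling on $\{\S=k\}$ while the $k$ sampled points are placed uniformly (exchangeably) among the population.

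**First I would** fix an event $A\in\G$ (i.e. $A$ depends only on the process and the ancestry of whichever particles get sampled, not on their multiplicity) and compute the right-hand side by conditioning on $\G$. Since $\Q^{\lambda,T}(\S=k\mid \G)=e^{-\lambda M}(\lambda M)^k/k!$, Tonelli's theorem lets me interchange the $\lambda$-integral and the expectation:
\begin{align*}
\int_0^\infty \frac{k\,\d\lambda}{\lambda}\,\Q^{\lambda,T}(A,\S=k)
=\E\!\left[\ind_A\int_0^\infty \frac{k\,\d\lambda}{\lambda}\,e^{-\lambda M}\frac{(\lambda M)^k}{k!}\right].
\end{align*}
The inner integral is a Gamma integral: substituting $u=\lambda M$ gives $\int_0^\infty k\,e^{-u}u^{k-1}/k!\,\d u=\Gamma(k)\,k/k!=1$, independent of the value of $M$ (on $\{M>0\}$). **The key step**—and the reason the mixing measure is exactly $k\,\d\lambda/\lambda$ rather than $\d\lambda$—is precisely this normalization: the factor $k/\lambda$ is engineered so that the $\lambda$-integral of the Poisson$(\lambda M)$ probability of hitting $k$ is identically one. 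Hence the right-hand side collapses to $\E[\ind_A]=\P^{k,T}(A)$, provided $\P^{k,T}$ is the law in which the underlying process is distributed as under $P$ and, given $\G$, the $k$ sampled particles are placed uniformly and independently in the time-$T$ population.

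**The main obstacle** I anticipate is not the computation above but pinning down the correct joint law on both sides so that the equality holds for events $A$ recording the \emph{ancestry} of the sampled particles, not merely for events measurable with respect to the process alone. I would argue that for a fixed sample size $k$, the spatial/type locations and resulting ancestry of the sample under $\Q^{\lambda,T}(\,\cdot\mid\S=k)$ coincide, \emph{for every} $\lambda$, with $k$ uniform independent picks from the population—this is the defining property of a Poisson process conditioned on its total count, whose points are exchangeable and, given the count, uniformly distributed according to the intensity $M$. Because this conditional law is independent of $\lambda$, it factors out of the integral and matches the uniform-sampling prescription of $\P^{k,T}$, so the ancestry-dependence is handled by the same identity. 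To make this rigorous I would first establish (\ref{po eq}) for a $\pi$-system of events generating the relevant $\sigma$-algebra—products of a process-event and a prescribed labelled ancestral configuration—and then extend to all $A$ by a monotone class argument; the multitype refinements are deferred to the full statement in Theorem \ref{thm:po c}.
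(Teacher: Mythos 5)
Your proposal is correct and follows essentially the same route as the paper's proof of the multidimensional version (Theorem \ref{thm:po c}): first verify the identity for events measurable with respect to the process alone, by conditioning to get the Poisson probability $e^{-\lambda M}(\lambda M)^k/k!$ and observing that the mixing measure $k\,\mathrm{d}\lambda/\lambda$ is exactly calibrated so the gamma integral equals one on $\{M>0\}$, and then extend to ancestry-dependent events using the fact that a Poisson sample conditioned on having $k$ points is a uniform $k$-sample, a conditional law independent of $\lambda$. The paper formalizes this last extension with a tower-property argument rather than your $\pi$-system/monotone class phrasing, but this is an inessential difference.
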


The value of Theorem \ref{po thm} lies in the fact that in many applications $\Q^{\lambda,T}(A, \mathcal{S} = k)$ is far easier to calculate than $\P^{k,T}(A)$, and furthermore, the integrated expression is the most simplified expression possible. 

Before discussing our results further, in the next section we look at a quick concrete example: What is the probability that $k$ uniformly chosen particles from a continuous-state branching process are descended from the same time $0$ ancestor?

\subsection{A quick example} \label{sec csbp ex}
A one-dimensional continuous-state branching process (CSBP) is a $\mathbb{R}_{ \geq 0}$-valued Markov process $(Z(t))_{t \geq 0}$ enjoying the branching property: if $Z^x$ is a copy of the process starting from $x$ and $Z^y$ is an independent copy of the process starting from $y$, then $Z^x + Z^y$ has the same law as $Z^{x+y}$. The branching property implies the existence of a function $u(t,\lambda)$ such that if $P_x$ is the law of the process starting from $x  > 0$, then
\begin{align*}
P_x [ e^{ - \lambda Z(t) } ] = e^{ - x u(t, \lambda) }.
\end{align*}
CSBPs may be endowed with a notion of genealogy, so that we may associate each interval $(0,Z(t))$ with a set $\mathcal{Z}(t)$ of `particles alive at time $t$', in such a way that whenever $s < t$, each particle in $\mathcal{Z}(t)$ has a unique ancestor particle in $\mathcal{Z}(s)$. Consider now the following problem. Under a probability measure $\mathbb{P}_x^{k,T}$, sample $k$ particles uniformly and independently from $(0,Z(T))$. We would like to calculate $\mathbb{P}_x^{k,T}(A)$, where 
\begin{align*}
A := \{ \text{The $k$ sampled points of $(0,Z(T))$ are descended from the same ancestor in $(0,Z(0))$} \}.
\end{align*}
It turns out it is difficult to compute $\mathbb{P}_x^{k,T}(A)$ directly. Under $\mathbb{Q}_x^{\lambda,T}$ however, by utilising the branching property in conjunction with the independent increments of a Poisson process, it is possible to show that the probability that a Poisson process of rate $\lambda$ on $(0,Z(T))$ has $k$ points, and that all of these points are descended from the same ancestor in $(0,Z(0))$, is given by 
\begin{align} \label{eq:pok}
\mathbb{Q}_x^{\lambda,T} \left( \mathcal{S} =k , A \right) =  (-1)^{k-1} \lambda^k \frac{x e^{ - x u(T,\lambda)}}{k!}  \frac{ \partial^k}{ \partial \lambda^k }   u(T,\lambda).  
\end{align}
The formula \eqref{eq:pok} for the `Poissonized' version of the common ancestor problem gives us an immediate integral formula for $\mathbb{P}_x^{k,T}(A)$. Indeed, plugging \eqref{eq:pok} into Theorem \ref{po thm}, we obtain
\begin{align} \label{k mrca}
\P_x^{k,T} \left( A  \right) = \frac{ (-1)^{k-1}x }{ k! }  \int_0^\infty  \frac{  k  \mathrm{d} \lambda}{ \lambda} \lambda^k e^{ - x u(T,\lambda)} \frac{ \partial^k}{ \partial \lambda^k }  u(T,\lambda).  
\end{align}
Equation \eqref{k mrca} is new, extending the special case $k=2$ appearing in Corollary 1 of Lambert \cite{lam}. We believe the integrated form  in \eqref{k mrca} to be the most simplified equation for this probability, which we hope the reader will regard as a testament to the suitability of this indirect approach. In fact, \eqref{k mrca} is a special case of the far more general Theorem \ref{thm:CSBP2}.

\subsection{The coalescent structure of CSBPs: two approaches}

The focal point of the present paper is the application of Poissonization methods to the coalescent structure of multidimensional CSBPs. Before stating our results in full in Sections 2, 3 and 4, we take a moment to outline two different approaches to understanding these coalescent structures:
\begin{itemize}
\item The first approach is the \emph{distributional} approach, where we seek to obtain explicit formulas for the finite dimensional distributions of the coalescent processes associated with samples of particles chosen at a time $T$. This approach is overviewed in Section \ref{subsec:distributional} 
\item The second approach is the \emph{local} approach, where we seek to understand coalescent processes through their event  rates, in particular drawing on connections with the Pitman-Sagitov $\Lambda$-coalescents. Here the particles are sampled at a very small time $t$, and we look at the $t \to 0$ asymptotics for probabilities of the various events that may happen in the small time window $[0,t]$. This approach is overviewed in Section \ref{subsec:local}.
\end{itemize}

\subsection{The distributional approach to the  coalescent structure of CSBPs} \label{subsec:distributional} 
Suppose under a probability measure $P_x$ we have a $d$-dimensional continuous-state branching process $\left(Z(t) \right)_{t \geq 0}$ starting from $x \in \mathbb{R}_{ \geq 0}^d$, and run until time $T$. We think of the vector $Z(t)$ as capturing the size of a multitype population at time $t$, with the $i^{\text{th}}$ component $Z_i(t)$ representing the size of the $i^{\text{th}}$ type.

Under a probability measure $\mathbb{P}_x^{k,T}$ extending $P_x$, consider taking a sample $\textbf{b} := \left( b_{i,j} : 1 \leq i \leq d , 1 \leq j \leq k_i \right)$ of particles from the time $T$ population, where $k = (k_1,\ldots,k_d)$ is a vector of non-negative integers, and for each $i$, the row $(b_{i,1},\ldots,b_{i,k_i} )$ constitutes a sample of $k_i$ particles taken uniformly and independently from the type $i$ population at time $T$. In other words, for each pair $(i,j)$, $b_{i,j}$ is a real number uniformly distributed on the interval $(0,Z_i(T))$, where $Z_i(T)$ is the size of the type $i$ population at time $T$.

Let $\mathsf{t} := (t_0,t_1,\ldots,t_m)$ be a mesh of $[0,T]$ (that is  $ = t_0 < t_1 < \ldots < t_{m-1} < t_m = T$), and consider that for each $0 \leq \ell \leq m-1$, each particle $b_{i,j}$ had an ancestor of some type at time $t_\ell$. This ancestry creates an `ancestral forest' $\mathrm{For}_{\mathsf{t}}(\textbf{b})$ of $\textbf{b}$ across the mesh $\mathsf{t}$. We give a full description of the forest in Section 3, but let us just say here that the roots of the trees in the forest correspond to time $0$ ancestors of the sampled particles, and the vertices of the tree carry labels in $\{1,\ldots,d\}$ corresponding to the types of particles they represent.

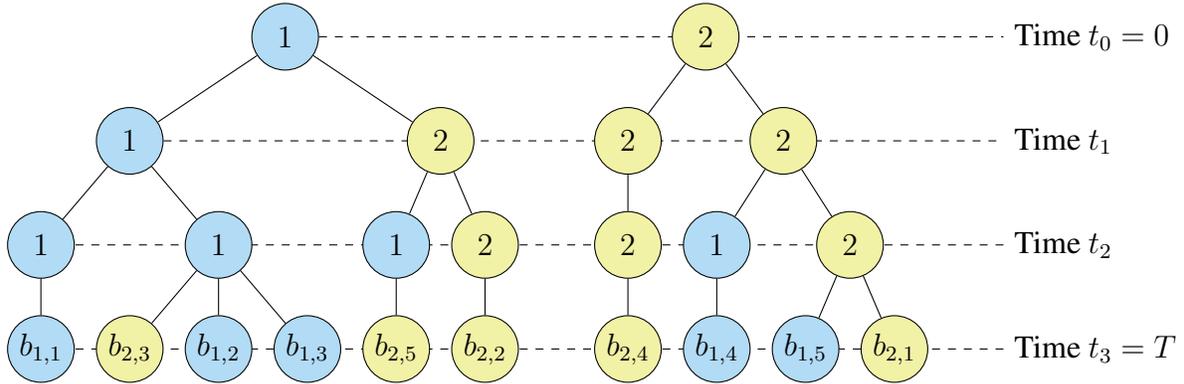
\begin{figure}[ht] 

\begin{forest}
[, phantom, s sep = 1cm
 [$1$,fill=col1,name=s,
[$1$,fill=col1,name=r,
[$1$,fill=col1, name=r2,
[$b_{1,1}$,fill=col1,name=r3,]
]
[$1$,fill=col1
[$b_{2,3}$,fill=col2]
[$b_{1,2}$,fill=col1]
[$b_{1,3}$,fill=col1]
]
]
[$2$,fill=col2,
[$1$,fill=col1
[$b_{2,5}$,fill=col2]
]
[$2$,fill=col2
[$b_{2,2}$,fill=col2]
]
]
]
 [$2$,fill=col2, name=g0,
[$2$,fill=col2
[$2$,fill=col2, 
[$b_{2,4}$,fill=col2]
]
]
[$2$,fill=col2, name=g1,
[$1$,fill=col1
[$b_{1,4}$,fill=col1]
]
[$2$,fill=col2, name=g2,
[$b_{1,5}$,fill=col1]
[$b_{2,1}$,fill=col2,name=g24,]
]
]
]
]
     \node (b0) [right=of g0 -| g24.east] {Time $t_0 = 0$};
    \node (b) [right=of g1 -| g24.east] {Time $t_1$};
    \begin{scope}[on background layer]
        \draw[dashed,] (r) -- (b);
    \end{scope}
\node(c) [above=of r,left=of s] {};
  \begin{scope}[on background layer]
        \draw[dashed,] (s) -- (b0);
    \end{scope}
\node(b2) [right=of g2 -| g24.east] {Time $t_2$};
  \begin{scope}[on background layer]
        \draw[dashed,] (b2) -- (r2);
    \end{scope}
\node(b3) [right=of g24 -| g24.east] {Time $t_3 = T$};
  \begin{scope}[on background layer]
        \draw[dashed,] (b3) -- (r3);
    \end{scope}
\end{forest}

  \caption{A $k = (5,5)$ sample of particles from a two-dimensional continuous-state branching process at a time $T$, and the ancestral forest of this sample across the mesh $(t_0,t_1,t_2,t_3)$. The particles $b_{2,3}, b_{1,2}$ and $b_{1,3}$ are all descendants of the same ancestor of type $1$ in the $t_2$ population.
 }
\label{fig:tree1}
\end{figure}

Our goal is to understand the law of the random forest $\mathrm{For}_{\mathsf{t}}(\textbf{b})$ under the probability measure $\mathbb{P}_x^{k,T}$. Without going into too much detail at this stage, we find it easier initially to understand the law of these random forests under a Poissonized version $\mathbb{Q}_x^{\lambda,T}$ of the measure $\mathbb{P}_x^{k,T}$, where the type $i$ particles are sampled according to a Poisson process of rate $\lambda_i$. We ultimately find in Theorem \ref{thm:CSBP1} that under such measures we have 
\begin{align} \label{eq:Qmention}
\Q_x^{\lambda,T} \left( \mathrm{For}_{\mathsf{t}}(\textbf{b})  = \mathcal{H} \Big| \mathcal{S} = k \right) = \frac{ \mathsf{E}_{x,\mathsf{t}} \left( \mathcal{H} , \lambda \right)  }{ \sum_{ \mathcal{I} \in \mathbb{H}^k(m) } \mathsf{E}_{x,\mathsf{t}} \left( \mathcal{I} , \lambda \right) },
\end{align}
where $\mathbb{H}^k(m)$ is the set of possible ancestral forests with leaves $\left(b_{i,j} : 1 \leq i \leq d, 1 \leq j \leq k_i \right)$ and $m$ generations, and $\mathsf{E}_{x,\mathsf{t}}( \cdot , \lambda)$ is a non-negative `energy' function on labelled forests depending on the initial population size $x$, the mesh $\mathsf{t}$, and the sampling rate $\lambda \in \mathbb{R}_{\geq 0}^d$. This energy function has an explicit form in terms of a product over nodes of the forest in terms involving the derivatives of the Laplace exponent $u(t,\lambda)$ (see \eqref{eq:energy}).

By combining Theorem \ref{thm:CSBP1} with a multidimensional version of the Poissonization result seen in the introduction, we obtain Theorem \ref{thm:CSBP2}, which states that there is a probability measure $ \Pi^k_T ( \mathrm{d} \lambda) $ on $\mathbb{R}_{ \geq 0}^d$ such that 
\begin{align*}
\mathbb{P}_x^{k,T} \left(\mathrm{For}_{\mathsf{t}}(\textbf{b}) = \mathcal{H}   \right) = \int_{\mathbb{R}_{ \geq 0}^d } \Pi^k_T ( \mathrm{d} \lambda)  \frac{ \mathsf{E}_{x,\mathsf{t}} \left( \mathcal{H} , \lambda \right)  }{ \sum_{ \mathcal{I} \in \mathbb{H}^k(m) } \mathsf{E}_{x,\mathsf{t}} \left( \mathcal{I} , \lambda \right) }.  
\end{align*}

One of the key tools in our approach here is a connection with a recent multi-variate version of Fa\`a di Bruno's formula appearing in Johnston and Prochno \cite{JP}, giving a formula for the higher order derivatives of composition chains
\begin{align*}
\frac{ \partial^{k_1} }{ \partial \lambda_1^{k_1} } \ldots \frac{ \partial^{k_d} }{ \partial \lambda_1^{k_d} } f \circ F^{(1)} \circ \ldots \circ F^{(m)} (\lambda)  = \sum_{ \mathcal{H} \in \mathbb{H}^k(m) } \mathsf{E}_{f,F^{(i)}}( \mathcal{H} , \lambda),
\end{align*} 
in terms of a partition function over labelled forests, where $f: \mathbb{R}^d \to \mathbb{R}$ and $F^{(i)} : \mathbb{R}^d \to \mathbb{R}^d$ are smooth functions. This multi-variate Fa\`a di Bruno formula may be used in conjunction with the semigroup identity for the Laplace exponent $u(t,\lambda)$ to obtain a compact expression for the denominator occuring in \eqref{eq:Qmention}.

\subsection{The local coalescent structure of CSBPs} \label{subsec:local}

We now discuss the local approach to coalescence in CSBPs. For the sake of clarity, and to motivate ideas, we begin by giving a full overview of the special one-dimensional case, $d=1$, where our local result is already implicit in the work of Labb\'e \cite{labbe}. Thereafter we sketch how this statement may be used to deduce prior work on specific CSBPs by Bertoin and Le Gall \cite{BLG}, Donnelly and Kurtz \cite{DK}, and  Birkner et al. \cite{birknerplus}. 

\subsubsection{The one-dimensional case}
Every one-dimensional CSBP is characterised by a branching mechanism $\psi$, a convex function of the form 
\begin{align} \label{cmech}
\psi(\lambda) = - \kappa \lambda + \beta \lambda^2 + \int_0^\infty (e^{ - \lambda r} - 1 + \lambda r \ind_{r \leq 1} ) \nu(\mathrm{d}r), 
\end{align}
where $\kappa \in \mathbb{R}$, $\beta \geq 0$, and $(1 \wedge r^2) \nu(\mathrm{d}r)$ is a finite measure on $[0,\infty)$. The branching mechanism $\psi$ determines the Laplace exponent $u(t,\lambda)$, and hence the law of the process, via the partial differential equation
\begin{align*}
\frac{ \partial}{ \partial t } u(t,\lambda) + \psi\left( u(t,\lambda) \right ) = 0, \qquad u(0,\lambda )= \lambda.
\end{align*}
Recall that $\mathcal{Z}(t)$ denotes the set of particles alive at time $t$, and for positive integers $k$ let $\mathcal{P}_k$ be the set of partitions of $\{1,\ldots,k\}$. Following Bertoin and Le Gall \cite{BLG}, we may define a $\mathcal{P}_k$-valued process $(\pi_u)_{0 \leq u \leq t}$ associated with the CSBP $(Z(u))_{0 \leq u \leq t}$ as follows. Conditional on $\{ Z(t) > 0 \}$, pick $k$ particles $b_1,\ldots,b_k$ uniformly and independently from $\mathcal{Z}(t)$. For times $0 \leq u \leq t$, we define the random partition $\pi_u$ of $\{1,\ldots,k\}$ corresponding to the equivalence relation
\begin{align*}
\text{ $i \sim_{\pi_u} j$} \iff \text{$b_i$ and $b_j$ are descended from the same particle in $\Zc(t-u)$.}
\end{align*}
The resulting process $(\pi_u)_{0 \leq u \leq t}$ is an exchangeable coalescent process, meaning that its law is invariant under permutations of $\{1,\ldots,k\}$ and that the blocks of the process merge as time passes.

An important class of exchangeable and Markovian coalescent processes are the $\Lambda$-coalescents \cite{pit,sag}, which are characterised in terms of finite measures $\Lambda$ on $[0,1]$ through the property that when the process has $k$ blocks, any $j$ of these blocks are merging to form a single new block at rate
\begin{align*}
\int_0^1  s^{j-2} (1-s)^{k-j} \Lambda(\mathrm{d}s) .
\end{align*}

The coalescent process $(\pi_u)_{0 \leq u \leq t}$  associated with the general continuous-state branching process is non-Markovian, and therefore cannot have a direct representation as a $\Lambda$-coalescent. However, the process $(\pi_u)_{0 \leq u \leq t}$ \emph{is} exchangeable, and we now give a result stating that $(\pi_u)_{0 \leq u \leq t}$ has a local representation in terms of a $\Lambda$-coalescent, where the merger measure $\Lambda^\psi(x, \mathrm{d}s)$ depends on the population size and branching mechanism. 

Before describing this correspondence we need the following definition relating measures on $[0,\infty)$ to those on $[0,1)$. For $x > 0$ let $T_x : \mathbb{R}_{ \geq 0} \to [0,1)$ be the map $T_x(r) = \frac{r}{x+r}$. Given a measure $\nu$ on $[0,\infty)$, we define the pushforward $T_x^\# \nu$ of $\nu$ to be the measure on $[0,1)$ given by 
\begin{align*}
T_x^\# \nu( A) =\nu \left( T_x^{ -1}(A) \right). 
\end{align*}
Theorem \ref{single type thm} is the special one-dimensional case of our main result on the local coalescent structure of multidimensional CSBPs, providing a dynamic link between continuous-state branching processes and $\Lambda$-coalescents. The merger measure at each time depends on the population size $x$ and the ingredients in the L\'evy-Khintchine representation \eqref{cmech} through the constant $\beta > 0$ and the pushforward of $\nu$ onto $[0,1]$ through $T_x$ but is independent of the deterministic growth parameter $\kappa$. 

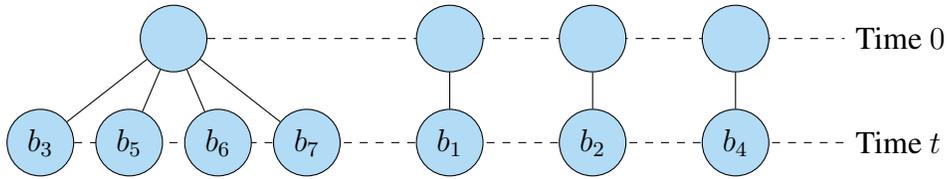
\begin{figure}[ht]

\begin{forest}
[, phantom, s sep = 1cm
[,fill=col1, name=a0,
[$b_3$,fill=col1, name=b0,]
[$b_5$,fill=col1,]
[$b_6$,fill=col1,]
[$b_7$,fill=col1,]
]
[,fill=col1,
[$b_1$,fill=col1,]
]
[,fill=col1,
[$b_2$,fill=col1,]
]
[,fill=col1,name=a,
[$b_4$,fill=col1,name=b,]
]
]
     \node (a1) [right=of a] {Time $0$};
    \node (b1) [right=of b] {Time $t$};
    \begin{scope}[on background layer]
        \draw[dashed,] (a0) -- (a1);
    \end{scope}
    \begin{scope}[on background layer]
        \draw[dashed,] (b0) -- (b1);
    \end{scope}
\end{forest}

  \caption{A sample of $k = 7$ particles taken from a one-dimensional continuous-state branching process at a small time $t$, of which $j=4$ are recently descended from the same ancestor at time $0$.
 }\label{fig:tree short}
\end{figure}

\begin{thm}[Labb\'e \cite{labbe}] \label{single type thm}
Suppose under a probability measure $P_x$ we have a continuous-state branching process starting from $x > 0$. Then for small times $t$, under a probability measure $\P_x^{k,t}$ extending $P_x$, pick $k$ particles uniformly from the population of the process at time $t$, and let $\left( \pi_u \right)_{0 \leq u \leq t}$ be the associated partition process. For $j \geq 2$, let $\gamma_{k,j}$ be a partition of $\{1,\ldots,k\}$ with one block of size $j$ and the remaining blocks all singletons. Then
\begin{align*}
\lim_{t \to 0} \frac{1}{t} \P^{k,t}_x \left( \pi_t = \gamma_{k,j} \right) = \int_0^1 s^{ j - 2} (1-s)^{ k- j} \Lambda^\psi( x,\mathrm{d}s),
\end{align*}
where the merger measure $\Lambda^\psi(x, \mathrm{d}s)$ is given by
\begin{align} \label{csbp lambda}
\Lambda^\psi( x ,  \mathrm{d}s)  = \frac{ 2 \beta}{ x} \delta_0 (\mathrm{d}s) + x s^2 T_x^\# \nu(\mathrm{d}s),
\end{align}
where $\beta$ is the Brownian component of the CSBP and $\nu$ is its L\'evy measure, so that its L\'evy-Khintchine representation is as in \eqref{cmech}.
\end{thm}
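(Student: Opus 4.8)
The plan is to prove the statement through the Poissonization identity of Theorem \ref{po thm}, which reduces the fixed-sample computation to a Poissonized one that factorises cleanly, and then to extract the $t\to 0$ asymptotics. Writing $\gamma_{k,j}$ for the partition whose unique non-trivial block is $\{1,\dots,j\}$ (exchangeability of $(\pi_u)$ makes this choice immaterial), Theorem \ref{po thm} gives
\[
\P_x^{k,t}(\pi_t=\gamma_{k,j}) = \int_0^\infty \frac{k\,\mathrm{d}\lambda}{\lambda}\,\Q_x^{\lambda,t}\bigl(\S=k,\ \pi_t=\gamma_{k,j}\bigr),
\]
so everything rests on the Poissonized probability on the right.

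For the Poissonized computation I would exploit the subordinator representation of the Laplace exponent: for fixed $t$, $\lambda\mapsto u(t,\lambda)$ is a Bernstein function, so $u(t,\lambda)=\mathsf{d}_t\lambda+\int_0^\infty(1-e^{-\lambda r})\mu_t(\mathrm{d}r)$, and correspondingly, under $P_x$, the time-$t$ population decomposes by time-$0$ ancestor into a ``dust'' part of total mass $x\mathsf{d}_t$ (a continuum of distinct infinitesimal ancestors, hence always contributing singletons) together with a Poisson collection of macroscopic families whose sizes have intensity $x\mu_t(\mathrm{d}r)$. Under $\Q_x^{\lambda,t}$ a rate-$\lambda$ Poisson sample colours each family independently: a family of size $r$ receives $\mathrm{Poisson}(\lambda r)$ points, and the dust contributes $\mathrm{Poisson}(\lambda x\mathsf{d}_t)$ singletons. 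The event $\{\S=k,\ \pi_t=\gamma_{k,j}\}$ asks that exactly one family receive $j$ points (the block) and that the other $k-j$ points be singletons. Poisson thinning makes the numbers of families carrying a prescribed count independent Poisson variables with means $xc_m=x\int_0^\infty\frac{(\lambda r)^m}{m!}e^{-\lambda r}\mu_t(\mathrm{d}r)$; using $\sum_{m\ge1}c_m+\lambda\mathsf{d}_t=u(t,\lambda)$, $\,c_1+\lambda\mathsf{d}_t=\lambda\,\partial_\lambda u$ and $c_m=\tfrac{(-1)^{m-1}\lambda^m}{m!}\partial_\lambda^m u$ for $m\ge2$, and dividing by $\binom{k}{j}$ to pass from the unlabelled configuration to the fixed labelled partition, one assembles
\[
\Q_x^{\lambda,t}\bigl(\S=k,\ \pi_t=\gamma_{k,j}\bigr)=\frac{(-1)^{j-1}}{k!}\,x^{k-j+1}\lambda^{k}\bigl(\partial_\lambda u(t,\lambda)\bigr)^{k-j}\,\partial_\lambda^{j}u(t,\lambda)\,e^{-xu(t,\lambda)}.
\]
Specialising to $j=k$ recovers \eqref{eq:pok}, a reassuring check.

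Substituting into the Poissonization identity and using $u(0,\lambda)=\lambda$ together with the flow $\partial_t u=-\psi(u)$, so that $u(t,\lambda)=\lambda-t\psi(\lambda)+o(t)$ and hence, for $j\ge2$, $\partial_\lambda^j u(t,\lambda)/t\to-\psi^{(j)}(\lambda)$, $\partial_\lambda u(t,\lambda)\to1$ and $e^{-xu(t,\lambda)}\to e^{-x\lambda}$, I would obtain
\[
\lim_{t\to0}\frac1t\P_x^{k,t}(\pi_t=\gamma_{k,j})=\frac{(-1)^{j}x^{k-j+1}}{(k-1)!}\int_0^\infty\lambda^{k-1}\psi^{(j)}(\lambda)\,e^{-x\lambda}\,\mathrm{d}\lambda.
\]
Now $\psi^{(j)}(\lambda)=2\beta\,\ind_{\{j=2\}}+(-1)^j\int_0^\infty r^j e^{-\lambda r}\nu(\mathrm{d}r)$, and evaluating the Gamma integrals $\int_0^\infty\lambda^{k-1}e^{-(x+r)\lambda}\mathrm{d}\lambda=(k-1)!/(x+r)^k$ produces a $\delta_0$-contribution $\frac{2\beta}{x}\ind_{\{j=2\}}$ and a jump-contribution $x^{k-j+1}\int_0^\infty r^j(x+r)^{-k}\nu(\mathrm{d}r)$. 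Recognising $r/(x+r)=T_x(r)$ and $x/(x+r)=1-T_x(r)$, the jump term equals $x\int_0^\infty T_x(r)^j(1-T_x(r))^{k-j}\nu(\mathrm{d}r)=\int_0^1 s^{j-2}(1-s)^{k-j}\,x s^2 T_x^\#\nu(\mathrm{d}s)$, and the two contributions combine into exactly $\int_0^1 s^{j-2}(1-s)^{k-j}\Lambda^\psi(x,\mathrm{d}s)$, as claimed.

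The main obstacle is the interchange of $\lim_{t\to0}t^{-1}$ with the $\lambda$-integral. The pointwise limits above are immediate, but dominated convergence requires uniform-in-$t$ control of the integrand, and the delicate regime is $\lambda\to\infty$, where $u(t,\lambda)$ may saturate rather than decay, so that $e^{-xu(t,\lambda)}$ offers no help. I would obtain the needed bounds from the flow itself: differentiating $\partial_t u=-\psi(u)$ in $\lambda$ gives $\partial_\lambda u(t,\lambda)=\exp\bigl(-\int_0^t\psi'(u(s,\lambda))\,\mathrm{d}s\bigr)\in(0,1]$ and linear ODEs for the higher derivatives, whose integration yields $0\le(-1)^{j-1}\partial_\lambda^j u(t,\lambda)\le C\,t\,g(\lambda)$ with a $\lambda$-integrable profile $g$, uniformly for small $t$; this legitimises the passage to the limit and completes the proof.
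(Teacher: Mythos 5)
Your strategy coincides with the paper's: Poissonize (Theorem \ref{po thm}), compute the Poissonized probability in closed form as a product of derivatives of $u$, divide by $t$ and pass to the limit under the $\lambda$-integral, then evaluate gamma integrals and recognise the pushforward $T_x^\#\nu$. This is exactly how the paper proceeds, since it deduces Theorem \ref{single type thm} by setting $d=1$ in Theorem \ref{thm:rate}, whose proof runs through Theorem \ref{thm:CSBP1}, Lemma \ref{lem:swap} and the gamma identity \eqref{eq:gamma fact}. Your closed form
\begin{align*}
\Q_x^{\lambda,t}\bigl(\S=k,\ \pi_t=\gamma_{k,j}\bigr)=\frac{(-1)^{j-1}}{k!}\,x^{k-j+1}\lambda^{k}\bigl(\partial_\lambda u(t,\lambda)\bigr)^{k-j}\partial_\lambda^{j}u(t,\lambda)\,e^{-xu(t,\lambda)}
\end{align*}
is correct and is precisely the $d=1$, $m=1$ case of \eqref{eq:mesh}; you derive it from the subordinator representation of $u(t,\cdot)$ and Poisson thinning, where the paper uses its marked Galton--Watson forest together with Fa\`a di Bruno's formula to fix the combinatorial constant --- a cosmetic difference in one dimension. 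The subsequent algebra (differentiating the L\'evy--Khintchine form, gamma integrals, $r/(x+r)=T_x(r)$) is also correct.

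The genuine gap is in your final paragraph --- the very step you identify as the main obstacle. The bound you propose, $0\le(-1)^{j-1}\partial_\lambda^j u(t,\lambda)\le C\,t\,g(\lambda)$ with $g$ integrable, uniformly in small $t$, is impossible whenever $\beta>0$ and $j=2$: by your own computation $(-1)\partial_\lambda^2 u(t,\lambda)/t\to\psi''(\lambda)\ge 2\beta$ pointwise as $t \to 0$, so any admissible $g$ must satisfy $Cg(\lambda)\ge2\beta$ for every $\lambda$, and no such $g$ is integrable on $(0,\infty)$. Concretely, for the Feller diffusion $u(t,\lambda)=\lambda/(1+\beta t\lambda)$ one has $(-1)\partial_\lambda^2u=2\beta t(1+\beta t\lambda)^{-3}$, whose $t$-uniform envelope is the constant $2\beta$ (and the full integrand carries a further factor $\lambda^{k-1}$, making matters worse). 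Your dismissal of $e^{-xu(t,\lambda)}$ is also backwards: the limiting integral $\int_0^\infty \lambda^{k-1}\psi^{(j)}(\lambda)e^{-x\lambda}\,\mathrm{d}\lambda$ converges only because of the exponential factor, so no domination argument can avoid using it. What is true is that for fixed $t$ the exponential need not vanish as $\lambda\to\infty$ (it tends to $e^{-xu(t,\infty)}$, which is positive when $u(t,\infty)<\infty$), but $u(t,\infty)\to\infty$ as $t\to0$, and a bound joint in $(t,\lambda)$ on the whole integrand does work: for instance, for Feller with $k=j=2$, writing $v=1+\beta t\lambda$, the integrand divided by $t$ equals $2\beta\lambda v^{-3}e^{-x\lambda/v}$, and maximising over $v\ge1$ gives the $t$-uniform integrable bound $C\min(\lambda,\lambda^{-2})$. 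So the repair is to dominate the full quantity $\lambda^{k-1}(\partial_\lambda u)^{k-j}\,|\partial_\lambda^j u|\,e^{-xu}/t$ jointly in $(t,\lambda)$, exponential included, rather than $\partial_\lambda^j u$ alone --- or to argue as in the paper's Lemma \ref{lem:swap}, which differentiates under the integral at $t=0$; that lemma is itself stated tersely, so your instinct to supply an estimate here is sound, but the particular estimate you propose provably fails.
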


Theorem \ref{single type thm} connects a single type CSBP with a $\Lambda$-coalescent in such a way that the merger measure $\Lambda$ depends on $\beta$ and $\nu$ but is independent of the deterministic growth parameter $\kappa$. This independence is easily interpreted, since the deterministic growth has no effect on the local coalescent structure of the process (though it does effect the size of the process in the long run). As mentioned above, Theorem \ref{single type thm} is implicit in the work of Labb\'e \cite[Section 3]{labbe}, though the precise formulation given here in terms of the $t \to  0$ limit may be deduced by setting $d=1$ in Theorem \ref{thm:rate}, which we state and prove in the sequel. 

Several preceeding connections between specific CSBPs and $\Lambda$-coalescents have already 
appeared in the literature. Arguably the simplest CSBP is Feller's diffusion, the CSBP with branching mechanism given by $\psi(\lambda) = \frac{1}{2} \lambda^2$, and corresponding to the case where $\beta = \frac{1}{2}$ and $\nu = 0$ in the L\'evy-Khintchine representation \eqref{cmech}. Here, the formula \eqref{csbp lambda} tells us that the associated merger measure is given by 
\begin{align} \label{kinlambda}
\Lambda^\psi(x,\mathrm{d}s) = \frac{ 1}{ x} \delta_0(\mathrm{d}s) .
\end{align}
The $\Lambda$-coalescent corresponding to the case where $\Lambda = \delta_0$ is known as Kingman's coalescent, and is characterised by only having pairwise mergers. Indeed, \eqref{kinlambda} ties in with a result by Donnelly and Kurtz \cite[Theorem 5.1]{DK}, which loosely speaking states that the coalescent process $(\pi_t)_{0 \leq t \leq T}$ associated with the Feller diffusion is a Kingman coalescent run at a speed reciprocal to $Z_{T-t}$ at time $t$. In Corollary \ref{cor:DK} we will encounter a multitype analogue of this result.

When $\psi(\lambda) = \lambda \log \lambda$, we call the CSBP a \emph{Neveu process}. This corresponds to the case where $\beta=0$ and $\nu(\mathrm{d}r) = r^{-2}$ in \eqref{cmech}. Plugging $\beta = 0$ and $\nu(\mathrm{d}r) = r^{-2}$ into \eqref{csbp lambda} we obtain
\begin{align} \label{neveubs}
\Lambda^\psi(x,\mathrm{d}s) = \mathrm{d}s. 
\end{align} 
We emphasise that this quantity is independent of the population size $x$, and that \eqref{neveubs} leads us to a result by Bertoin and Le Gall \cite{BLG}, who showed that the coalescent process $(\pi_t)_{0 \leq t \leq T}$ associated with Neveu's CSBP is given by the \emph{Bolthausen-Sznitman coalescent} \cite{BS}, the $\Lambda$-coalescent corresponding to the case where $\Lambda$ is the Lebesgue measure on $[0,1]$.

A priori, there is a surprising qualitative difference in these two previous results regarding the coalescent structure of the Feller diffusion and of the Neveu process: the coalescent process associated with the former has strong dependence on the population size, whereas that of the latter process is independent of the population size. Birkner et al \cite{birknerplus} gave an explanation for this qualitative discrepancy by bridging the gap between these two results, establishing a connection between the so-called $\alpha$-stable CSBP with L\'evy measure $\nu(\mathrm{d}r) = C_\alpha \mathrm{d}r/r^{ 1 + \alpha}$ and the Beta coalescents, whose merger measure is given by $\Lambda_\alpha(\mathrm{d}s) := \tilde{C}_\alpha x^{1 - \alpha} \left( \frac{s}{1-s} \right)^{ 1 - \alpha} \mathrm{d}s$. Loosely speaking, this connection interpolates between the aforementioned results of Donnelly and Kurtz \cite{DK} and Bertoin and Le Gall \cite{BLG}, with the case $\alpha = 2$ corresponding to the Feller diffusion and Kingman's coalescent and $\alpha =1$ corresponding to Neveu's CSBP and the Bolthausen-Sznitman coalescent. 

Let us also refer the reader to Berestycki, Berestycki and Schweinsberg \cite{BBS}, who gain a great deal of information about the small time information of Beta coalescents by embedding them into alpha-stable CSBPs, as well as Berestycki, Berestycki and Limic \cite{BBL}, who use a variation of the Donnelly and Kurtz lookdown construction (\cite{DK}) to establish small-time connections between general CSBPs and $\Lambda$-coalescents.

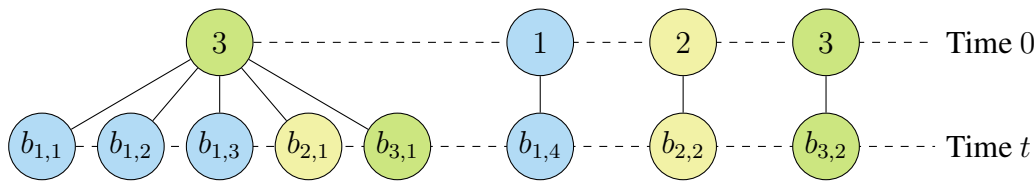
\begin{figure}[ht!]

\begin{forest}
[, phantom, s sep = 1cm
[$3$,fill=col3, name=a0,
[$b_{1,1}$,fill=col1, name=b0,]
[$b_{1,2}$,fill=col1,]
[$b_{1,3}$,fill=col1,]
[$b_{2,1}$,fill=col2,]
[$b_{3,1}$,fill=col3,]
]
[$1$,fill=col1,
[$b_{1,4}$,fill=col1,]
]
[$2$,fill=col2,
[$b_{2,2}$,fill=col2,]
]
[$3$,fill=col3,name=a,
[$b_{3,2}$,fill=col3,name=b,]
]
]
     \node (a1) [right=of a] {Time $0$};
    \node (b1) [right=of b] {Time $t$};
    \begin{scope}[on background layer]
        \draw[dashed,] (a0) -- (a1);
    \end{scope}
    \begin{scope}[on background layer]
        \draw[dashed,] (b0) -- (b1);
    \end{scope}
\end{forest}

  \caption{A sample of $k = (4,2,2)$ particles taken from a three-type CSBP at a small time $t$. At time zero, a collection of $j = (3,1,1)$ particles were recently descended from a type $c=3$ ancestor.
 }\label{fig:tree2}
\end{figure}

As stated above, we in fact work in the general multidimensional setting, giving  an expression for the merger rates associated with multidimensional CSBPs in terms of multidimensional $\Lambda$-coalescents. Theorem \ref{thm:rate}, which is the multidimensional version of Theorem \ref{single type thm} above, states that when a sample of $(k_1,\ldots,k_d)$ individuals is taken just after a moment when the population has size $x \in \mathbb{R}_{\geq 0}^d$, the probability that a group of blocks with types $(\alpha_1,\ldots,\alpha_d)$ merges to form a single block of type $c$ is given by 
\begin{align*}
\sum_{ j = 1, j \neq c}^d \ind_{ \alpha = \mathbf{e}_j} \kappa_{c,j} \frac{x_c}{x_j} +  \ind_{ \alpha = 2 \mathbf{e}_c} \frac{ 2 \beta_c }{x_c} + x_c \int_{ [0,1]^d} s^\alpha ( 1- s)^{ k- \alpha} T_x^{\#} \nu_c( \mathrm{d} s).
\end{align*}
where $s^{ \alpha } (1 - s)^{k - \alpha } := \prod_{ i = 1}^d s_i^{\alpha_i}  (1 - s_i)^{k_i - \alpha_i}$, $T_x^{\# } \nu_c$ is a pushforward of the $c^{\text{th}}$ component of the L\'evy measure $\nu_c$ on $[0,\infty)^d$ to $[0,1)^d$, and $\beta_c$ and $\kappa_{c,j}$ are parameters associated with the Brownian and deterministic parts of the process respectively.

Finally, let us discuss the application of Theorem \ref{thm:rate} to the multidimensional Feller diffusion, the $d$-dimensional continuous-state branching process with branching mechanism 
\begin{align} \label{feller mech}
\psi_i( \lambda ) = - \sum_{ j = 1}^d \kappa_{i,j} \lambda_j + \beta_i \lambda_i^2 \qquad 1 \leq i \leq d,
\end{align}
where $\beta_i \geq 0$, and the $\kappa_{i,j}$ are required to be non-negative whenever $i \neq j$. This continuous-state branching process has a version with continuous paths, and may be understood as a strong solution to the stochastic differential equation
\begin{align} \label{feller SDE}
\mathrm{d} Z_j(t) =  \sqrt{ 2  \beta_j Z_j(t) } \mathrm{d}B_j(t)  + \sum_{ i = 1}^d \kappa_{i,j} Z_i(t)  \mathrm{d}t, 
\end{align}
where $\left( B_i(t) : 1 \leq i \leq d \right)$ are independent standard Brownian motions. From \eqref{feller SDE} it is straightforward to interpret the branching structure of the process: the type $j$ population varies according to its own internal branching structure with variance $\beta_j$, and mass from the population of type $i$ continuously creates new mass of type $j$ at rate $\kappa_{i,j}$ per unit of type $i$ mass.

We now give a brief verbal account of Corollary \ref{cor:DK}, which amounts to an application of Theorem \ref{thm:rate} to the $d$-dimensional Feller diffusion. Consider sampling various particles from the different populations of a multidimensional Feller diffusion at some time $T$. Then going backwards in time, ancestral lines coalesce according to the following rates when the population size is given by $x \in \mathbb{R}_{ \geq 0}^d$:
\begin{itemize}
\item two type $i$ ancestral lines coalesce to form a single ancestral line of type $i$ at rate $2 \beta_i / x_i$,
\item a single ancestral line of type $i$ changes type to type $c$ at a rate $\kappa_{c,i} x_c/x_i$. 
\end{itemize}

In the next section we overview related literature.

\subsection{Further discussion of related work}
Many authors have studied the problem of describing the ancestry of $k$ uniformly chosen particles from the one-dimensional Bienaym\'e-Galton-Watson tree. 
The case $k=2$ appears in \cite{buhler:super,lam,oconnell:genealogy_mrca,zubkov:mrca}, with $k=3$ also considered in \cite {durrett:genealogy}. The general $k$ case has only been looked at more recently, by Grosjean and Huillet \cite{grosjean_huillet:coalescence}, Hong \cite{hong}, Le \cite{le:coalescence_GW}, and the first author and coauthors \cite{HJR,J17}. Let us point out in particular that Theorem 3.1 of \cite{J17} appears to play the role of a one-dimensional analogue to Theorem \ref{thm:CSBP1} of the present paper --- though in the setting of Galton-Watson trees as opposed to continuous-state branching processes. Popovic \cite{popovic:asymptotic_genealogy_critical_bp} looked at critical Bienaym\'e-Galton-Watson trees, taking a different viewpoint.
Namely, she considered the genealogy of a sample containing a proportion $p \in (0,1]$ of the entire population in a tree conditioned to survive until a large time, relating the coalescence times to a point process. Popovic's sampling here corresponds to the one-dimensional measure $\mathbb{Q}^p$ we define in the discrete Poissonization result, Theorem \ref{thm:po d}. Let us also mention other work in this vein by Popovic and Rivas \cite{pr}, as well as by the second author and his coauthors \cite{lambert:coalescent_branching_trees,lambert:genealogy_binary_branching_process,lambert_stadler:birth_death_cpp}. 

As for the branching processes with continuous-states, the rigorous construction of the genealogy dates back to Le Gall and Le Jan \cite{LGLJ}, and ever since various related problems have been tackled. Labb\'e \cite{labbe} studied the so-called `Eve property' of CSBPs - the existence of an ancestor from which the entire population descends asymptotically, the second author looked at CSBPs with immigration in \cite{lambert:genealogy_csbp_imm}, and Bertoin, Fontbona and Martinez \cite{bertoin_fontbona_martinez} considered `prolific individuals' associated with the process. The reader is also referred to Section 4 of the set of lecture notes \cite{lambert dynamics} by the second author.

Finally, we would like to highlight the recent work of Foucart, Ma, and Mallein \cite{FMM}, which is of particular relevance. The authors of \cite{FMM} studied certain flows associated with CSBPs going backwards in time, in particular those associated with Poisson samples on the populations of one-dimensional CSBPs. Namely, consider running a rate $\lambda$ Poisson process on the population of a one-dimensional CSBP at time $T$, and declare a particle in the time zero population to have \emph{outdegree $j$} if they have exactly $j$ descendants at time $T$ who are points of the Poisson process. Paraphrasing slightly, Foucart, Ma and Mallein \cite[Section 5]{FMM} observed that particles with outdegree $j$ also occur within the time zero population according to a Poisson process, this time with rate
\begin{align*}
r^j(T,\lambda) := (-1)^{j-1} \frac{ \lambda^j}{ j!} \frac{ \partial^j }{ \partial \lambda^j } u(T,\lambda),
\end{align*}
where $u(T,\lambda)$ is the Laplace exponent of the CSBP. In Section 6 of the present article, we make a multidimensional analogue of this observation which is crucial in our study of the coalescent structure of multidimensional CSBPs. That concludes the discussion of related literature.

We now provide an overview of the remainder of the paper.

\subsection{Overview}

\begin{itemize}
\item In Section \ref{sec:po}, we state our Poissonization results in full detail, which provide integral transforms allowing one to `Poissonize' uniform samples from both continuous and discrete multidimensional populations.

\item
In Section \ref{sec:distributional} we state our results on the distributional approach to the coalescent structure of continuous-state branching processes. Theorem \ref{thm:CSBP1} shows that the random ancestral forests occuring under $\mathbb{Q}_x^{\lambda,T}$ may be understood in terms of partition functions.  By combining Theorem \ref{thm:CSBP1} with the multidimensional Poissonization result \ref{thm:po c}, we immediately obtain Theorem \ref{thm:CSBP2}, which gives an explicit integral formula for the law of a random ancestral forest chosen under $\mathbb{P}_x^{k,T}$.

\item
In Section \ref{sec:local} we state our results on the local approach to the coalescent structure of continuous-state branching processes in terms of multidimensional $\Lambda$-coalescents. 

\item
The remaining sections, Sections \ref{sec:po proof}, \ref{sec:distributional proof} and Sections \ref{sec:local proof}, are dedicated to proving the results stated in Sections \ref{sec:po}, \ref{sec:distributional} and \ref{sec:local} respectively.

\end{itemize}

\section{The Poissonization theorems} \label{sec:po}

In this section we state our Poissonization theorems, which give integral formulas allowing one to pass between difficult problems involving uniform sampling to easier problems involving 
Poissonized sampling. The first result, Theorem \ref{thm:po c}, is a multidimensional version of the theorem stated in the introduction. The latter result, Theorem \ref{thm:po d} is a discrete version of Theorem \ref{thm:po c}, suitable for sampling from discrete populations like multidimensional Galton-Watson trees or branching Brownian motion. In the discrete version, the `Bernoulli sample' plays the role of the `Poisson sample', where a Bernoulli sample refers to independently including each particle in the sample with probability $p$.

\subsection{Continuous Poissonization} \label{sec:cpoi}
A (continuous) $d$-type population is any subset of $\{1,\ldots,d\} \times (0,\infty)$ of the form
\begin{align*}
\mathcal{Z}  = \bigcup_{ i =1}^d \left( \{ i \} \times \mathcal{Z}_i \right),
\end{align*}
where each $\mathcal{Z}_i$ is a (possibly empty) measurable subset of $(0,\infty)$ with finite Lebesgue measure. We define the Lebesgue measure $\mathrm{Leb}(\mathcal{Z})$ to be the vector
\begin{align*}
\mathrm{Leb}(\mathcal{Z} ) = \left( \mathrm{Leb}(\mathcal{Z}_1),\ldots,\mathrm{Leb}(\mathcal{Z}_d) \right) \in \mathbb{R}_{ \geq 0}^d,
\end{align*}
where of course $\mathrm{Leb}(\mathcal{Z}_i)$ denotes the Lebesgue measure of $\mathcal{Z}_i$. By canonically associating $ \{ i \} \times \mathcal{Z}_i $ with $\mathcal{Z}_i$, there are natural notions of Lebesgue measure, uniform distribution, and Poisson process on $\{ i \} \times \mathcal{Z}_i$. 

We now define two different types of sampling on a continuous $d$-type population $\mathcal{Z}$. 
\begin{defn}[$k$-sample on $\mathcal{Z}$] \label{def:k}
Let $k$ be an element of $\mathbb{Z}_{ \geq 0}^d$. A $k$-sample on a $d$-type population $\mathcal{Z}$ is a random collection $\textbf{b} = (b_{i,j} : 1 \leq i \leq d, 1 \leq j \leq \mathcal{S}_i )$ of elements of $\mathcal{Z}$, such that independently for each $i$, if the Lebesgue measure of $\mathcal{Z}_i$ is non-zero we set $\mathcal{S}_i = k_i$ and sample $k_i$ points $b_{i,1},\ldots,b_{i,k_i}$ uniformly from $\{ i \} \times \mathcal{Z}_i$, whereas if the Lebesgue measure of $\mathcal{Z}_i$ is zero we set $\mathcal{S}_i = 0$ so that the $i^{\text{th}}$ row of $\textbf{b}$ is the empty array. 
\end{defn}

\begin{defn}[$\lambda$-sample on $\mathcal{Z}$] \label{def:lambda}
Let $\lambda$ be an element of $\mathbb{R}_{ \geq 0}^d$. A $\lambda$-sample on a $d$-type population $\mathcal{Z}$ is a random collection $\textbf{b} = (b_{i,j} : 1 \leq i \leq d, 1 \leq j \leq \mathcal{S}_i )$ of elements of $\mathcal{Z}$ such that independently for each $i$, $(b_{i,1},\ldots,b_{i,\S_i})$ are the points of a rate $\lambda_i$ Poisson process on $\{i\} \times \mathcal{Z}_i$ labelled according to a uniformly chosen permutation of $\{1,\ldots,\S_i\}$. 
\end{defn}

(While technically, there is some ambiguity in the two preceding definitions --- in that an element $\lambda$ of $\mathbb{R}_{\geq 0}^d$ may also be an element of $\mathbb{Z}_{\geq 0}^d$ --- it will always be clear from context what we mean when we say $\lambda$-sample.)

Suppose now we have a measurable space $(\Omega,\mathcal{F})$ carrying a random $d$-type population $\mathcal{Z}$.
Let $(\Omega', \mathcal{F}')$ be another measurable space carrying a random vector $\S = (\S_1,\ldots,\S_d)$ of non-negative integers and a collection of $\{1,\ldots,d\} \times (0,\infty)$-valued random variables $(b_{i,j} : 1 \leq i \leq d, 1 \leq j \leq \S_i )$. 

Let $P$ be a probability measure on $(\Omega,\mathcal{F})$, and consider the following two extensions of $P$ onto the product space $(\tO, \tF) := (\Omega \otimes  \Omega', \mathcal{F} \otimes \mathcal{F}')$.
\begin{itemize}
\item For $k \in \mathbb{Z}_{ \geq 0}^d$, define the extension $\mathbb{P}^k$ of $P$ by letting $\textbf{b}$ be a $k$-sample on $\mathcal{Z}$. 
\item For $\lambda \in \mathbb{R}_{ \geq 0}^d$, define the extension $\mathbb{Q}^\lambda$ of $P$ by letting $\textbf{b}$ be a $\lambda$-sample on $\mathcal{Z}$.
\end{itemize}
The measures $\P^k$ and $\Q^\lambda$ are extensions of $P$ in the sense that for all $A \in \mathcal{F}$ we have $\P^k (A \times \Omega') = \Q^\lambda( A \times \Omega' ) = P(A)$.

Finally, for non-negative integers $j$, define the measure $\pi^j( \mathrm{d} \lambda)$ on $[0,\infty)$ by
\begin{align} 
\pi^j( \mathrm{d} \lambda) :=
\begin{cases}
j  \mathrm{d} \lambda/ \lambda, \qquad  \text{if $j > 0$},\\
\delta_0( \mathrm{d} \lambda), \qquad \text{if $j = 0$}. 
\end{cases} \label{solo def}
\end{align}
The following theorem is our main Poissonization result and a multidimensional version of Theorem \ref{po thm} seen in the introduction, stating that $\P^k$ may be realised as a mixture of the measures $\Q^\lambda$. 
 
\begin{thm} \label{thm:po c}
For non-zero $k \in \mathbb{Z}^d_{ \geq 0}$, let $\pi^k( \mathrm{d} \lambda)$ be the product measure $\pi^k := \pi^{k_1} \otimes \ldots \otimes \pi^{k_d}$. Let $Z := \mathrm{Leb}(\mathcal{Z})$, and let $\{ Z \succ k \} $ denote the event that sampling is possible from the population, that is
\begin{align} \label{J def}
\{ Z \succ k \}  := \{ \text{The Lebesgue measure of $\mathcal{Z}_i$ is positive for each $i$ such that $k_i > 0$} \}.
\end{align}
Then for every $A$ in $\tilde{\mathcal{F}}$ 
\begin{align} \label{c multi eq}
\P^k (A, Z \succ k  ) = \int_{\mathbb{R}_{\geq 0}^d} \pi^k( \mathrm{d} \lambda) \Q^{\lambda}(A, \mathcal{S} = k ).
\end{align}
Moreover, $\pi^k( \mathrm{d} \lambda)$ is the unique measure with this property.
\end{thm}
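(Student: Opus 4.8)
The plan is to prove the identity by conditioning on the random population $\mathcal{Z}$ and exploiting the conditional uniformity of Poisson processes, which reduces the multidimensional statement to a one-dimensional computation carried out separately in each of the $d$ types. Since both extensions $\P^k$ and $\Q^\lambda$ agree with $P$ on $\F$, and since Definitions \ref{def:k} and \ref{def:lambda} build the sample $\mathbf{b}$ independently across the $d$ coordinates, the conditional law of $\mathbf{b}$ given $\mathcal{Z}$ factorizes as a product over $i$ under each measure. First I would record the elementary but crucial fact that, conditionally on $\mathcal{Z}$ with $\Leb(\mathcal{Z}_i) > 0$, the points of a rate-$\lambda_i$ Poisson process on $\{i\}\times\mathcal{Z}_i$, conditioned on $\{\S_i = k_i\}$ and relabelled by a uniform permutation, have exactly the law of a uniform $k_i$-sample; this is the conditional uniformity of the Poisson process combined with the exchangeability of i.i.d.\ uniform points. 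Consequently, writing $Z_i = \Leb(\mathcal{Z}_i)$, on the event $\{Z \succ k\}$ one has the conditional factorization
\[
\Q^\lambda(A, \S = k \mid \mathcal{Z}) = \P^k(A \mid \mathcal{Z}) \prod_{i=1}^d \Q^{\lambda_i}(\S_i = k_i \mid \mathcal{Z}), \qquad \Q^{\lambda_i}(\S_i = k_i \mid \mathcal{Z}) = e^{-\lambda_i Z_i}\frac{(\lambda_i Z_i)^{k_i}}{k_i!},
\]
the key point being that the conditional law of $A$ given $\{\S = k\}$ no longer depends on $\lambda$.

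Next I would integrate this identity against $\pi^k(\mathrm{d}\lambda) = \pi^{k_1}\otimes\cdots\otimes\pi^{k_d}$. The product structure separates the integral into a product of one-dimensional integrals, and for each $i$ with $k_i > 0$ the normalizing factor is
\[
\int_0^\infty \frac{k_i\,\mathrm{d}\lambda_i}{\lambda_i}\, e^{-\lambda_i Z_i}\frac{(\lambda_i Z_i)^{k_i}}{k_i!} = \frac{k_i Z_i^{k_i}}{k_i!}\int_0^\infty \lambda_i^{k_i-1} e^{-\lambda_i Z_i}\,\mathrm{d}\lambda_i = 1,
\]
a Gamma integral which is finite precisely because $Z_i > 0$ on $\{Z \succ k\}$; for $k_i = 0$ the factor $\pi^0 = \delta_0$ trivially contributes $1$. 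Hence $\int \pi^k(\mathrm{d}\lambda)\,\Q^\lambda(A,\S=k\mid\mathcal{Z}) = \P^k(A\mid\mathcal{Z})$ on $\{Z\succ k\}$. Taking $P$-expectations over $\mathcal{Z}$ restricted to $\{Z\succ k\}$, and swapping the expectation with the $\lambda$-integral by Tonelli (all integrands being nonnegative), yields $\int \pi^k(\mathrm{d}\lambda)\,\Q^\lambda(A,\S=k, Z\succ k) = \P^k(A, Z\succ k)$. Finally, since a rate-$\lambda_i$ Poisson process on a Lebesgue-null set has no points almost surely, the event $\{\S=k\}$ forces $Z_i > 0$ for every $i$ with $k_i>0$, so $\{\S=k\}\subseteq\{Z\succ k\}$ up to a $\Q^\lambda$-null set and the restriction to $\{Z\succ k\}$ may be dropped, giving \eqref{c multi eq}.

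For uniqueness, suppose a measure $\mu$ on $\mathbb{R}_{\geq 0}^d$ satisfies \eqref{c multi eq} in place of $\pi^k$ in the general framework, for every $A$. I would test against the deterministic populations $\mathcal{Z}_i = (0,z_i)$, with $z \in (0,\infty)^d$ arbitrary and $A = \tO$ the whole space: there $\Q^\lambda(\S=k) = \prod_i e^{-\lambda_i z_i}(\lambda_i z_i)^{k_i}/k_i!$ and $\P^k(Z\succ k) = 1$, so the hypothesized identity reads $\int \mu(\mathrm{d}\lambda)\prod_i e^{-\lambda_i z_i}(\lambda_i z_i)^{k_i}/k_i! = 1$ for all $z$. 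Equivalently, the measure $\tilde{\mu}(\mathrm{d}\lambda) := \prod_i(\lambda_i^{k_i}/k_i!)\,\mu(\mathrm{d}\lambda)$ has multidimensional Laplace transform $\int \tilde{\mu}(\mathrm{d}\lambda)\,e^{-\langle\lambda,z\rangle} = \prod_i z_i^{-k_i}$ on $(0,\infty)^d$. By injectivity of the Laplace transform on measures on $[0,\infty)^d$, $\tilde{\mu}$ is uniquely determined, and a direct check shows it coincides with the one produced by $\pi^k$ (whose $i$-th marginal has density $\lambda_i^{k_i-1}/(k_i-1)!$). Since the weight $\prod_i \lambda_i^{k_i}$ annihilates any mass $\mu$ might place on a hyperplane $\{\lambda_i = 0\}$ with $k_i>0$ --- mass which is in any case invisible to \eqref{c multi eq}, as such $\lambda$ give $\Q^\lambda(\S=k)=0$ --- this forces $\mu = \pi^k$ among measures not charging these null hyperplanes, which is the content of the uniqueness claim.

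The genuinely delicate points, rather than the routine Gamma and Laplace computations, are the measure-theoretic bookkeeping: justifying the regular conditional distributions and the factorization of the sample given $\mathcal{Z}$ across types, and making the conditional-uniformity identity precise for a possibly irregular random set $\mathcal{Z}_i$ of positive Lebesgue measure. For uniqueness, the main subtlety is selecting a test family of populations rich enough to separate measures --- deterministic populations of every size accomplish this via Laplace injectivity --- together with the observation that mass on the hyperplanes $\{\lambda_i=0\}$ with $k_i>0$ is invisible to the formula and must therefore be understood as excluded from the uniqueness statement.
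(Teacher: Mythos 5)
Your proposal is correct, and its existence half is essentially the paper's own argument with the steps reordered: the paper first establishes \eqref{c multi eq} for events in the subalgebra $\mathcal{F}_0 = \{B \times \Omega' : B \in \mathcal{F}\}$ via Fubini and the gamma identity \eqref{eq:gamma id}, and only then invokes conditional uniformity of Poisson points given $\{\S = k\}$ together with the tower property to pass to all of $\tilde{\mathcal{F}}$; you instead invoke conditional uniformity at the outset to get the factorization $\Q^\lambda(A,\S=k\mid\cdot) = \P^k(A\mid\cdot)\prod_i e^{-\lambda_i Z_i}(\lambda_i Z_i)^{k_i}/k_i!$ and then integrate. The ingredients (conditional Poisson counts, conditional uniformity, gamma integral, Tonelli, and the observation that $\{\S=k\}\subseteq\{Z\succ k\}$ up to $\Q^\lambda$-null sets) are identical; the only point to tidy is that for general $A \in \tilde{\mathcal{F}}$ one should condition on $\mathcal{F}_0$ rather than on $\sigma(\mathcal{Z})$, though your version follows by the tower property since the Poisson-count factor is $\sigma(\mathcal{Z})$-measurable. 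Where you genuinely diverge is uniqueness, and there your route is arguably sharper than the paper's. The paper keeps $P$ general and argues that density of the exponentials $e^{-\langle\theta,\cdot\rangle}$ in $\mathcal{C}_0(\mathbb{R}_{\geq 0}^d)$ upgrades \eqref{theta eq} to the pointwise identity \eqref{z eq}; strictly, that step only yields \eqref{z eq} for almost every $z$ under the law of $Z$, so it implicitly quantifies over all admissible $P$ --- a quantification your choice of deterministic test populations $\mathcal{Z}_i = (0,z_i)$ makes explicit and rigorous. Moreover, you correctly flag that any mass placed on a hyperplane $\{\lambda_i = 0\}$ with $k_i > 0$ is invisible to \eqref{c multi eq} (since there $\Q^\lambda(\S=k)=0$), so uniqueness can only hold among measures not charging such hyperplanes; the paper's concluding phrase ``and hence determines $\rho$'' elides exactly this point, since knowing $\hat{\rho}(\mathrm{d}\lambda) = \prod_i \lambda_i^{k_i}\rho(\mathrm{d}\lambda)$ determines $\rho$ only off those hyperplanes. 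So your proof buys a cleaner, honest uniqueness statement, while the paper's version buys brevity at the cost of these two glossed-over points.
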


The following result is an alternative statement of Theorem \ref{thm:po c}.

\begin{thm} \label{thm:c}
In the set up of Theorem \ref{thm:po c}, provided $P( Z \succ k ) > 0$ we may write
\begin{align*}
\mathbb{P}^k ( A | Z \succ k ) = \int_{\mathbb{R}^d_{ \geq 0}} \Pi^k(  \mathrm{d} \lambda) \Q^\lambda \left( A | \mathcal{S} = k \right),
\end{align*}
where 
\begin{align} \label{eq:laprob}
\Pi^k( \mathrm{d} \lambda) := P\left[ \frac{ (\lambda Z)^k }{ k!} e^{ - \langle \lambda, Z \rangle} \Big| Z \succ k \right]  \pi^k(  \mathrm{d} \lambda)  
\end{align}
is a probability measure on $\mathbb{R}_{ \geq 0}^d$, $Z := \mathrm{Leb}(\mathcal{Z})$, $\langle \lambda , Z \rangle := \sum_{i=1}^d \lambda_i Z_i$, and $\frac{(\lambda Z)^k}{k!} := \prod_{ i =1}^k (\lambda_i Z_i)^{k_i}/k_i!$.
\end{thm}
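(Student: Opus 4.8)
The plan is to deduce Theorem \ref{thm:c} from Theorem \ref{thm:po c} by computing the Poisson sampling probability $\Q^\lambda(\S = k)$ and then reorganising the integral identity \eqref{c multi eq} into conditional form. First I would compute this probability. Under $\Q^\lambda$ the population $\Zc$ has marginal law $P$, and conditionally on $\Zc$ the count $\S_i$ of sampled type-$i$ points is the number of points of a rate $\lambda_i$ Poisson process on a set of Lebesgue measure $Z_i$, hence Poisson distributed with mean $\lambda_i Z_i$. By independence across types,
\begin{align*}
\Q^\lambda\left( \S = k \mid \F \right) = \prod_{i=1}^d \frac{(\lambda_i Z_i)^{k_i}}{k_i!} e^{ - \lambda_i Z_i} = \frac{(\lambda Z)^k}{k!} e^{ - \langle \lambda, Z \rangle},
\end{align*}
and taking $P$-expectations gives $\Q^\lambda(\S = k) = P\big[ \frac{(\lambda Z)^k}{k!} e^{ - \langle \lambda, Z \rangle} \big]$.

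Next I would observe that this integrand vanishes off the event $\{Z \succ k\}$: if $Z_i = 0$ for some $i$ with $k_i > 0$, then the factor $(\lambda_i Z_i)^{k_i}$ is zero. Hence the conditioning in the definition \eqref{eq:laprob} of $\Pi^k$ contributes only the constant $1/P(Z \succ k)$, so that
\begin{align*}
\Pi^k( \mathrm{d} \lambda) = \frac{ \Q^\lambda(\S = k) }{ P(Z \succ k) } \, \pi^k( \mathrm{d} \lambda).
\end{align*}
To assemble the result, I would factor $\Q^\lambda(A, \S = k) = \Q^\lambda(A \mid \S = k)\, \Q^\lambda(\S = k)$ inside \eqref{c multi eq}, and divide both sides by $P(Z \succ k) > 0$. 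Since $\{Z \succ k\}$ is an $\F$-event and $\P^k$ extends $P$, the left-hand side becomes $\P^k(A \mid Z \succ k)$, while the right-hand side becomes $\int_{\R^d_{\geq 0}} \Pi^k(\mathrm{d}\lambda)\, \Q^\lambda(A \mid \S = k)$, as claimed. Taking $A = \tO$ (equivalently, applying \eqref{c multi eq} with $A = \tO$) then yields $\Pi^k(\R^d_{\geq 0}) = 1$, confirming that $\Pi^k$ is a probability measure.

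The derivation is short because all the genuine probabilistic content already sits in Theorem \ref{thm:po c}; the only point requiring care is the measure-theoretic bookkeeping around $\Q^\lambda(A \mid \S = k)$, which is undefined exactly when $\Q^\lambda(\S = k) = 0$. At such $\lambda$, however, the $\pi^k$-density of $\Pi^k$ also vanishes, so these values carry no $\Pi^k$-mass and the conditional probability may be assigned arbitrarily there without affecting the integral. I would also record the boundary conventions ($0^0 = 1$, and $\lambda_i = 0$ forcing $\S_i = 0$) to verify that the Poisson formula above remains valid in all degenerate cases.
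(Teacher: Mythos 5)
Your proof is correct, and the main chain of reasoning---computing $\Q^\lambda(\S = k)$ via the conditional Poisson structure, factoring $\Q^\lambda(A, \S = k) = \Q^\lambda(A \mid \S = k)\,\Q^\lambda(\S = k)$ inside \eqref{c multi eq}, and dividing through by $P(Z \succ k)$---is exactly the paper's proof. The one place you genuinely diverge is the verification that $\Pi^k$ has unit mass: the paper proves this in a separate preliminary lemma by a direct computation (Fubini plus the gamma-integral identity \eqref{eq:gamma id}, which evaluates $\int_{\mathbb{R}_{\geq 0}} \pi^{k_i}(\mathrm{d}\lambda_i)\,\frac{(\lambda_i z_i)^{k_i}}{k_i!}e^{-\lambda_i z_i}$ as $\ind_{k_i = 0} + \ind_{k_i > 0,\, z_i > 0}$), whereas you obtain it for free by setting $A = \tO$ in the identity you have just derived. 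Your route is slightly more economical, since the gamma-integral computation is already embedded in the proof of Theorem \ref{thm:po c} and need not be repeated; the paper's route has the merit of establishing that $\Pi^k$ is a probability measure as a self-contained statement before it is used as a mixing measure. Two smaller points in your favour: you make explicit the observation that $\frac{(\lambda Z)^k}{k!}e^{-\langle\lambda, Z\rangle}$ vanishes off $\{Z \succ k\}$, which is precisely what converts the conditional expectation in \eqref{eq:laprob} into $\Q^\lambda(\S = k)/P(Z \succ k)$ and is left implicit in the paper's instruction to ``divide through''; and you address the set of $\lambda$ where $\Q^\lambda(\S = k) = 0$ and the conditional probability $\Q^\lambda(\,\cdot \mid \S = k)$ is undefined, noting that the $\pi^k$-density of $\Pi^k$ vanishes there, so these $\lambda$ carry no mass and the integral is unaffected.
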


Theorem \ref{thm:c} has the interpretation that in order to take a uniform $k$-sample from a random $d$-type population, we may first choose a `random' $\lambda$ from $\mathbb{R}_{ \geq 0}^d$ according to the probability measure \eqref{eq:laprob}, and then condition on a $\lambda$-sample having $\{ \S = k \}$.

Theorem \ref{thm:po c} and Theorem \ref{thm:c} are proved in Section \ref{sec:cproofs}. In the next section we turn to discussing discrete Poissonization.
\subsection{Discrete Poissonization} \label{disc poiss}
In this section we introduce discrete Poissonization, a discrete analogue of the continuous Poissonization procedure suitable for sampling from discrete populations.

A discrete $d$-type population $\mathcal{N}$ is a finite set of the form
\begin{align} \label{eq:dunion}
\mathcal{N} = \bigcup_{ i = 1}^d \left(  \{i \} \times \mathcal{N}_i  \right)
\end{align}
where each $\mathcal{N}_i$ is a subset of $\{1,2,\ldots,\}$. We write $N_i$ for the number of elements of $\mathcal{N}_i$. 

We now consider two different ways of sampling from a discrete $d$-type population, first involving uniform sampling and then Bernoulli sampling.

\begin{defn}[$k$-sample on $\mathcal{N}$]
Let $k$ be an element of $\mathbb{Z}_{ \geq 0}^d$. A $k$-sample on a discrete $d$-type population $\mathcal{N}$ is a random collection $\textbf{b} = (b_{i,j} : 1 \leq i \leq d, 1 \leq j \leq \mathcal{S}_i )$ of elements of $\mathcal{N}$, such that independently for each $i$, if $N_i < k_i$ we set $\S_i = 0$, and if $N_i \geq k_i$, we set $\mathcal{S}_i = k_i$ and sample $k_i$ distinct points $b_{i,1},\ldots,b_{i,k_i}$ uniformly (i.e. uniformly without replacement) from $\{ i \} \times \mathcal{N}_i$.
\end{defn}

\begin{defn}[$p$-sample on $\mathcal{N}$]
Let $p$ be an element of $[0,1]^d$. A $p$-sample on a discrete $d$-type population $\mathcal{N}$ is a random collection $\textbf{b} := (b_{i,j} : 1 \leq i \leq d, 1 \leq j \leq \mathcal{S}_i )$ of elements of $\mathcal{N}$ generated as follows. Independently for each $(i,j)$, each element $(i,j)$ of $\mathcal{N}$ is included in the sample with probability $p_i$, and not included with probability $1-p_i$. Let $\mathcal{S}_i$ be the number of elements of the form $(i,j)$ included in the sample for some $j$. Then the elements $(i,j_1),\ldots,(i,j_{\S_i} )$ are labelled $b_{i,1},\ldots,b_{i,\S_i}$ according to a uniformly chosen permutation of $\{1,\ldots, \S_i\}$. 
\end{defn}

Now suppose we have a measurable space $(\Omega,\mathcal{F})$ carrying a random discrete $d$-type population. Let $(\Omega', \mathcal{F}')$ be another measurable space carrying a random vector $(\S_1,\ldots,\S_d)$ of non-negative integers and a collection of $\{1,\ldots,d\} \times \{1,2,\ldots\}$-valued random variables $(b_{i,j} : 1 \leq i \leq d, 1 \leq j \leq \S_i )$. 

Let $P$ be a probability measure on $(\Omega,\mathcal{F})$, and consider the following two extensions of $P$ onto the product space $(\tO, \tF) := (\Omega \otimes  \Omega', \mathcal{F} \otimes \mathcal{F}')$.
\begin{itemize}
\item For $k \in \mathbb{Z}_{ \geq 0}^d$, define the extension $\mathbb{P}^k$ of $P$ by letting $\textbf{b}$ be a $k$-sample on $\mathcal{N}$. 
\item For $p \in [0,1]^d$, define the extension $\mathbb{Q}^p$ of $P$ by letting $\textbf{b}$ be a $p$-sample on $\mathcal{N}$.
\end{itemize}
The measures $\P^k$ and $\Q^p$ are extensions of $P$ in the sense that for all $A \in \mathcal{F}$ we have $\P^k (A \times \Omega') = \Q^p( A \times \Omega' ) = P(A)$.

Finally, for non-negative integers $j$, define the measure $\bar{\pi}^j(dp)$ on $[0,1]$ by 
\begin{align*}
\bar{\pi}^j(d p) :=
\begin{cases}
j dp/ p, \qquad  \text{if $j > 0$},\\
\delta_0(d p), \qquad \text{if $j = 0$}.
\end{cases}
\end{align*}
The following theorem is a discrete analogue of Theorem \ref{thm:po c}, stating that $\P^k$ may be realised as a mixture of the measures $\Q^p$.

\begin{thm} \label{thm:po d}
For non-zero $k \in \mathbb{Z}^d_{ \geq 0}$, let $\bar{\pi}^k(dp)$ be the product measure $\bar{\pi}^k := \bar{\pi}^{k_1} \otimes \ldots \otimes \bar{\pi}^{k_d}$ on $\mathbb{R}_{\geq 0}^d$.  Let $\{ N \geq k \}  := \{ N_i \geq k_i \text{ for all $i$} \}$ denote the event that sampling is possible from the population. 
Then for every $A$ in $\tilde{\mathcal{F}}$
\begin{align} \label{d po eq}
\P^k (A, N \geq k) = \int_{[0,1)^d} \bar{\pi}^k(d p ) \Q^{p}(A, \mathcal{S} = k ).
\end{align}
Moreover, $\bar{\pi} ^k(d p)$ is the unique measure with this property.
\end{thm}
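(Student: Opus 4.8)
The plan is to condition on the population to reduce to a deterministic $\mathcal{N}$, exploit the fact that a Bernoulli sample conditioned on its size is a uniform sample, and then collapse a Beta integral to the constant $1$; uniqueness will follow from a moment argument. \textbf{Reduction to a deterministic population.} Conditioning both sides of \eqref{d po eq} on $\mathcal{F}$ and noting that $\bar{\pi}^k$ does not depend on the population, Tonelli's theorem (all integrands are non-negative) permits interchanging $\int_{[0,1)^d}\bar{\pi}^k(dp)$ with $\E_P[\,\cdot\mid\mathcal{F}]$. It therefore suffices to prove, for $P$-a.e.\ realization of $\mathcal{N}$, that
\begin{align*}
\ind_{N\geq k}\,\P^k(A\mid\mathcal{F}) = \int_{[0,1)^d}\bar{\pi}^k(dp)\,\Q^{p}(A,\S=k\mid\mathcal{F}),
\end{align*}
and since conditioning on $\mathcal{F}$ fixes the population, we may treat $\mathcal{N}$ as deterministic with sizes $N=(N_1,\dots,N_d)$ and regard $A$ as depending only on the sample $\textbf{b}$ and $\S$.

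\textbf{The core identity.} Under $\Q^p$ each element is retained independently, so $\S_i\sim\Bin(N_i,p_i)$ independently across $i$, and conditionally on $\{\S=k\}$ the retained elements form a uniform subset placed in uniform random order --- which is exactly the law of $\textbf{b}$ under $\P^k$, and crucially independent of $p$. Hence for a product event $A=\bigcap_i A_i$,
\begin{align*}
\Q^{p}(A,\S=k)=\Big(\prod_{i=1}^d\binom{N_i}{k_i}p_i^{k_i}(1-p_i)^{N_i-k_i}\Big)\P^k(A).
\end{align*}
Integrating coordinatewise against $\bar{\pi}^k=\bigotimes_i\bar{\pi}^{k_i}$: for a coordinate with $k_i\geq1$ the Beta integral gives $\int_0^1 k_i\binom{N_i}{k_i}p_i^{k_i-1}(1-p_i)^{N_i-k_i}\,dp_i=k_i\binom{N_i}{k_i}\tfrac{(k_i-1)!(N_i-k_i)!}{N_i!}=\ind_{N_i\geq k_i}$, while for $k_i=0$ the atom $\bar{\pi}^0=\delta_0$ yields $\Q^{0}(\S_i=0)=1=\ind_{N_i\geq0}$. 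The product of these factors is $\ind_{N\geq k}$, so the displayed identity holds for all product events. As both sides are finite measures in $A$ (the total-mass computation just performed bounds the right-hand side by $1$) and the product events form a generating $\pi$-system for the sample $\sigma$-algebra, Dynkin's $\pi$--$\lambda$ theorem extends the identity to all $A\in\tilde{\mathcal{F}}$, proving existence.

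\textbf{Uniqueness.} If $\mu$ on $[0,1)^d$ satisfies \eqref{d po eq} for all $A$ across the class of setups, then specializing to deterministic single-type populations of each size and taking $A$ to be the whole space gives, after the factorization above, the one-dimensional constraints $\int_{[0,1)}p^k(1-p)^{N-k}\,\mu(dp)=\binom{N}{k}^{-1}$ for all $N\geq k$. Setting $\nu(dp):=p^k\mu(dp)$, these read $\int_{[0,1)}(1-p)^m\,\nu(dp)=\binom{k+m}{k}^{-1}$ for every $m\geq0$; since the functions $(1-p)^m$ span a dense subspace of $C[0,1]$, the Hausdorff moment problem determines the finite measure $\nu$ uniquely, forcing $\nu(dp)=k\,p^{k-1}\,dp$ and hence $\mu=\bar{\pi}^k$ on $(0,1)$.

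I expect the uniqueness step to be the delicate point. The transform $\mu\mapsto\int\mu(dp)\,\Q^p(\cdot,\S=k)$ cannot detect mass placed on the hyperplanes $\{p_i=0\}$ with $k_i\geq1$, where $\Q^p(\S=k)=0$, so the assertion must be read as uniqueness among measures not charging these invisible boundary sets; on the complementary region the moment argument pins $\mu$ down coordinatewise. The existence computation, by contrast, is routine once the $p$-independence of the conditional sample law has been isolated, that observation being the conceptual heart of the argument.
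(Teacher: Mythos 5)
Your proof is correct and is essentially the paper's own argument: both hinge on the observation that a Bernoulli sample conditioned to have size $k$ is a uniform sample without replacement, both collapse the mixture via the beta integral $\int_0^1 k\,p^{k-1}\binom{N}{k}(1-p)^{N-k}\,dp = \ind_{N \geq k}$ after conditioning on the population, and both settle uniqueness by a moment/generating-function argument (your Hausdorff moment formulation is just the completed version of the paper's one-line sketch). Your closing caveat --- that the transform cannot see mass placed on $\{p_i = 0\}$ for coordinates with $k_i \geq 1$, so uniqueness must be read modulo such mass --- is a legitimate refinement rather than an error: the paper's uniqueness claim (in both the discrete and continuous theorems) is stated without this qualification and is subject to exactly the same caveat.
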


We remark that is possible to derive a discrete analogue of Theorem \ref{thm:c}. We leave the details to the reader.

In the next section, we turn to discussing the distributional approach to the coalescent structure of CSBPs.

\section{The distributional approach to the coalescent structure of CSBPs} \label{sec:distributional}

\subsection{Multi-type CSBPs} \label{sec:csbpintro}
 For multi-indices $\alpha$ and elements $\lambda$ of $\mathbb{R}_{\geq 0}^d$, we write $|\alpha| := \alpha_1 + \ldots + \alpha_d$, $\alpha! := \alpha_1! \ldots \alpha_d!$ and $\lambda^\alpha := \lambda_1^{\alpha_1} \ldots \lambda_d^{\alpha_d}$. We define the differential operator $D^\alpha: \mathcal{C}^\infty( \mathbb{R}^d) \to \mathcal{C}^\infty( \mathbb{R}^d)  $ by
\begin{align*}
D^\alpha f(\lambda) := \frac{ \partial^{\alpha_1}}{\partial \lambda_1^{\alpha_1} } \ldots  \frac{ \partial^{\alpha_d}}{\partial \la_d^{\alpha_d} } f(\la).
\end{align*}
Moreover, whenever $F:\mathbb{R}^d \to \mathbb{R}^d$ with components $(F_1,\ldots,F_d)$ where $F_i : \mathbb{R}^d \to \mathbb{R}$, we find it will be clearer notationally to write
\begin{align*}
D_i^\alpha [F] (\lambda) := D^\alpha F_i (\lambda).
\end{align*}
\subsubsection{Laplace exponents}
We now introduce finite dimensional continuous-state branching processes, following the set up in Kyprianou, Palau and Ren \cite{KPR}. A $d$-dimensional continuous-state branching process is a $\mathbb{R}^d_{\geq 0}$-valued Markov process enjoying the branching property: if $Z^x$ is a copy of the process starting from $x$ and $Z^y$ is a copy of the process starting from $y$, then $Z^x + Z^y$ has the same law as $Z^{x+y}$. The branching property implies the existence of a Laplace exponent $u: \mathbb{R}_{\geq 0} \times \mathbb{R}_{ \geq 0}^d \to \mathbb{R}_{\geq 0}^d$ such that if $P_x$ is the law of the process starting from $x \in \mathbb{R}_{ \geq 0}^d$ then
\begin{align} \label{branch exp}
P_x \left[ e^{ - \langle \theta , Z(t) \rangle } \right] = e^{ - \langle x, u(t,\theta) \rangle}
\end{align}
where $\langle x,y \rangle = \sum_{ i =1}^d x_i y_i$ denotes the Euclidean inner product on $\mathbb{R}^d$. It is straightforward to show using \eqref{branch exp} and the Markov property that the Laplace exponent satisfies the semigroup property
\begin{align} \label{semigroup}
u\left(t,u(s,\theta) \right) = u(t + s, \theta).
\end{align}
The Laplace exponent (and by extension the law of the continuous-state branching process) is determined by a branching mechanism $\psi: \mathbb{R}^d\to \mathbb{R}^d$ through the differential equation
\begin{align} \label{csbp pde multi}
\partial_t u(t,\theta) + \psi( u(t,\theta) ) = 0.                                 
\end{align}
As in the one-dimensional case, the branching mechanism $\psi:\mathbb{R}^d \to \mathbb{R}^d$ has a L\'evy-Khintchine representation, so that the $c^{\text{th}}$ component of $\psi$ is given by
\begin{align} \label{eq:lkrep}
\psi_c(\la) = - \sum_{ j = 1}^d \kappa_{c,j} \lambda_j + \beta_c \lambda_c^2 + \int_{\mathbb{R}_{\geq 0}^d} \left( e^{ - \langle \la , r \rangle } - 1 +  \lambda_c r_c \ind_{ r_c \leq 1 } \right) \nu_c(\mathrm{d}r),
\end{align}                                                                                                                                                                                                                                                                                                                                                                                                                                                                                                                                                                                                                                                                                                                                                                                                                                                                                                                                                                                                                                                                                                                                                                                                                                                                                                                                                                                                                                                                                                                                                                                                                                                                                                                                                                                                                                                                                                                                                                                                                                                                                                                                                                                                                                                                                                                                                                                                                                                                                                                                                                                                                                                                                                                                                                                                                                                                                                                                                                                                                                                                                                                                                                                                                                                                                                                                                                                                                                                                                                                                                                                                                                                                                                                                                                                                                                                                                                                                      
where $\kappa_{c,j}$ are real numbers with $\kappa_{c,j} \geq 0$ for $j \neq c$, $\beta_c \geq 0$ and the $\nu_c$ is a measure on $\mathbb{R}_{ \geq 0}^d$ satisfying the integrability condition 
\begin{align} \label{eq:integrability}
\int_{\mathbb{R}_{\geq 0}^d}\left\{ (r_c^2 \wedge 1 ) + \sum_{ j \neq c} (r_j \wedge 1) \right\}  \nu_c(\mathrm{d}r) < \infty.
\end{align} 
In summary, the distribution of the $d$-dimensional continuous-state branching process is determined
by a collection of real numbers $(\kappa_{c,j})_{ 1 \leq c,j \leq d}$ controlling the deterministic growth of the process, a collection of non-negative reals $(\beta_c)_{1 \leq c \leq d}$ controlling the Brownian fluctuations in each component, and a collection $(\nu_c)_{1 \leq c \leq d}$ of jump measures on $\mathbb{R}_{ \geq 0}^d$ determining the rates and sizes of discontinuities of the process. We refer the reader to Section 1 of \cite{KPR} for further information.

\subsubsection{Genealogy}
Multidimensional CSBPs may be endowed with a notion of genealogy, so that in the terminology of Section \ref{sec:po} we may think of the process $(Z(t))_{t \geq 0}$ as a collection of $d$-type populations $\left( \mathcal{Z}(t) \right)_{t \geq 0}$ endowed with an \emph{ancestry relation}, so that for certain pairs of particles $b_1 \in \mathcal{Z}(t_1)$ and $b_2 \in \mathcal{Z}(t_2)$, we write $b_1 \leq b_2$ and we say $b_1$ is an ancestor of $b_2$ and $b_2$ is a descendant of $b_1$. 

The pair $\left( \cup_t \mathcal{Z}(t) , \leq \right)$ has the following properties
\begin{itemize}
\item Ancestry is transitive. That is, if $b_1 \leq b_2$ and $b_2 \leq b_3$ then $b_1 \leq b_3$. Moreover, ancestry is reflexive and antisymmetric, so that $b_1 \leq b_2$ and $b_2 \leq b_1$ occurs if and only if $b_1 = b_2$. In particular, the pair $\left( \cup_t \mathcal{Z}(t) , \leq \right)$ is a partially ordered set.
\item For each pair of times $t_1 < t_2$, and each $b_2 \in \mathcal{Z}(t_2)$, there exists a unique $b_1 \in \mathcal{Z}(t_1)$ such that $b_1 \leq b_2$. 
\end{itemize}

We will always take our CSBPs over a filtered space $(\Omega,\mathcal{F},\left( \mathcal{F}_t )_{t \geq 0 } \right)$ in such a way that each $\mathcal{F}_t$ measures the poset $\left( \cup_{ 0 \leq s \leq t} \mathcal{Z}(s) , \leq \right)$ in addition to $(Z_s)_{0 \leq s \leq t}$. 

In the present article we choose not to delve too deeply into technical issues surrounding the genealogy of CSBPs. The interested reader is directed to Bertoin and Le Gall \cite{BLG}.

\subsection{Labelled forests} \label{sec:forestdef} 
\subsubsection{The set of labelled forests}
When a sample of particles are taken from a $d$-dimensional CSBP at some time $T$, by studying their ancestry at a sequence of earlier times $0 = t_0 < t_1 < \ldots < t_m = T$, we obtain a multidimensional forest with generations $0$ through $m$. We now take a moment to identify the collection of possible forests, borrowing notation from \cite{JP}.

Recall that a graph $(V,E)$ is a tree if it is connected and contains no cycles. A tree is \emph{rooted} if there is a designated \emph{root} vertex $v^{(0)} \in V$. A finite graph $(V,E)$ is a rooted forest if it contains $n$ components $\left( \mathcal{T}_1,\ldots, \mathcal{T}_n \right)$, each of which is a rooted tree. For $1 \leq i \leq n$, let $v_i^{(0)}$ denote the root of the tree $\mathcal{T}_i$. For $\ell \geq 1$,  we write $V_i^{(\ell)}$ for the set of vertices a graph distance $\ell $ from each $v_i^{(0)}$. We note that for distinct pairs $(i, \ell)$, the sets $V_i^{(\ell)}$ are disjoint. We collectively refer to the disjoint union $V^{(\ell)} := \bigcup_{ i= 1}^n  V_i^{(\ell)}$ as generation $\ell$ of the forest. Each vertex $w$ in some $V_i^{(\ell+1)}$ has a unique parent vertex $v \in V_i^{(\ell)}$, and in this case we say $w$ is the child of $v$. We say a vertex is a leaf if it has no children, and internal if it does have children. We note every vertex in generation $\ell + j$ has a unique ancestor in generation $\ell$. 

The following definition encapsculates the set of possible ancestral forests associated with sample $\textbf{b} := (b_{i,j} : 1 \leq i \leq d, 1 \leq j \leq k_i )$ of particles from a $d$-dimensional CSBP across a mesh $0 = t_0 < t_1 < \ldots < t_m = T$. 

\begin{defn}
For $k \in \mathbb{Z}_{ \geq 0}^d$ and $m \geq 1$, let $\mathbb{H}^k(m)$ denote the set of quadruplets $\mathcal{H} = (V,E,\phi,\tau)$ such that 
\begin{itemize}
\item The underlying graph $(V,E)$ is a rooted forest $(\mathcal{T}_1,\ldots,\mathcal{T}_n)$.
\item Every leaf of the forest lies in generation $m$.
\item The function $\phi$ is a bijection between the leaves of $(V,E)$ and $[k]$, where
\begin{align*}
[k] := \left\{ (i,j ) : 1 \leq i \leq d, 1 \leq j \leq k_i \right\}.
\end{align*}
\item The labelling function $\tau:V \to [d]$ is any function labelling the vertices such that $\tau$ \emph{respects} $\phi$ in the sense that whenever $\phi(v) = (i,j)$ for some $j$ we have $\tau(v) = i$. There are no constraints on the values taken by $\tau$ on the internal vertices. We call $\tau(v)$ the type of the vertex $v$.
\end{itemize}
\end{defn}
The forest in Figure \ref{fig:tree1} is an example element of $\mathbb{H}^k(m)$ where $k = (5,5)$ and $m = 3$. 

\subsubsection{The ancestral forest associated with a sample}
Suppose we have a set $b = (b_{i,j} : 1 \leq j \leq k_i )$ of particles within the time $T$ population of the continuous-state branching process, such that for each $(i,j)$, the particle $b_{i,j}$ has type $i$. Now suppose we have a mesh $\mathsf{t} := (t_0,\ldots,t_m)$ of times satisfying $t_0 = 0$ and $t_m = T$. Then we have the following definition:

\begin{defn}
For elements $\textbf{b} = (b_{i,j} : 1 \leq j \leq k_i )$ of $\mathcal{Z}(T)$ such that each $b_{i,j} \in \mathcal{Z}_i(T)$, and a mesh $\mathsf{t} = (t_0,\ldots,t_m)$ of $[0,T]$, the ancestral forest $\mathrm{For}_{\mathsf{t}}( \textbf{b} )$ associated with $b$ is the element of $\mathbb{H}^k(m)$ defined as follows:
\begin{itemize}
\item Each leaf of $\mathrm{For}_{\mathsf{t}}(\textbf{b})$ corresponds to some $b_{i,j}$. The leaf $v$ corresponding to $b_{i,j}$ is labelled $\phi(v) = (i,j)$. 
\item For each $0 \leq l \leq m-1$, each vertex of $V^{(\ell)}$ correspond to a time $t_\ell$ ancestor of at least one $b_{i,j}$. The vertex $v$ corresponding to an ancestor of type $i$ is typed $\tau(v) = i$. 
\item A vertex $v$ in some $V^{(\ell)}$ is connected by an edge to some $w$ to $V^{(l+1)}$ if $v$ is an ancestor of $w$ in the CSBP.
\end{itemize}
\end{defn}

The goal of the distributional approach to the coalescent structure of multidimensional CSBPs is to understand the law of the random forest $\mathrm{For}_{\mathsf{t}}( b_{i,j} )$ when the $(b_{i,j})$ are a uniform $k$-sample from $\mathcal{Z}(T)$. To this end, we use a Poissonization set up.

Suppose we have a filtered probability space $\left(\Omega,\mathcal{F}, \left( \mathcal{F}_t \right)_{t \in [0,T]} , P_x \right)$ carrying a CSBP $(Z_t)_{t \in [0,T]}$ starting from $x \in \mathbb{R}_{ \geq 0}^d$ and run until time $T$. Again, we emphasise that the $\sigma$-algebras $\mathcal{F}_t$ contain all of the information about the genealogy of the process until time $t$. 

Now consider the following two extensions of this probability space:
\begin{itemize}
\item Under the first extension $\mathbb{P}_x^{k,T}$ of $P_x$, conditionally on $\mathcal{Z}(T)$ we take a $k$-sample from the time $T$ population, as in Definition \ref{def:k}. 
\item Under the second extension $\mathbb{Q}_x^{\lambda,T}$ of $P_x$, conditionally on $\mathcal{Z}(T)$ we take a $\lambda$-sample from the time $T$ population, as in Definition \ref{def:lambda}.
\end{itemize}

In the next section we discuss the laws of the random ancestral forests associated with samples taken according to these measures.

\subsection{The laws of random ancestral forests} \label{sec:ancestral}

Given a labelled forest $\mathcal{H}$ in  $\mathbb{H}^k(m)$ and a non-leaf vertex $v$, we define the \emph{outdegree} $\mu(v)$ of $v$ to be the multi-index whose $i^{\text{th}}$ component is given by 
\begin{align*}
\mu(v)_i := \# \{ w \in \mathcal{V} : \text{$w$ is a child of $v$ and $\tau(w) = i$} \}.
\end{align*}
Similarly, we define the \emph{rootdegree} $\rho(\mathcal{H})$ of the forest $\mathcal{H}$ to be the multi-index whose $i^{\text{th}}$ component counts the number of roots of trees in the forest who have type $i$, that is
\begin{align*}
\rho(\mathcal{H})_i := \# \{ 1 \leq z \leq n  : \tau (v_z^0)  = i  \}.
\end{align*}
Now given a forest $\mathcal{H}$ of $\mathbb{H}^k(m)$, a mesh $\mathsf{t} = (t_0,\ldots,t_m)$ of times, and an element $x$ of $\mathbb{R}_{ \geq 0}^d$, we define the energy of the forest by 
\begin{align} \label{eq:energy}
\mathsf{E}_{\mathsf{t},x} \left( \mathcal{H} , \lambda \right) := (-1)^{k - \rho} x^\rho \prod_{ \ell = 0}^{m-1} \prod_{ v \in V^{(\ell)} } D_{\tau(v)}^{\mu(v)}[ u] \left( \Delta t_\ell , u( T-t_{\ell+1} , \lambda) \right),
\end{align}
where $\Delta t_\ell := t_{\ell+1 } - t_\ell$ and $\rho = \rho(H)$ is the rootdegree of $\mathcal{H}$. We clarify that $D_{\tau(v)}^{\mu(v)}[ u]\left( \Delta t_\ell , u( T-t_{\ell+1} , \lambda) \right)$ refers to the $\mu(v)^{\text{th}}$ derivative of the $\tau(v)^{\text{th}}$ component of $u$ with respect to the \emph{second} variable, evaluated in the second variable at $\theta = u( T-t_{\ell+1} , \lambda)$. Although not obvious from the definition, for every $\mathcal{H}$ in $\mathbb{H}_k(m)$, the quantity $\mathsf{E}_{t,x} \left( \mathcal{H} , \lambda \right)$ is non-negative. 

The following theorem states that, conditional on the event $\{ \mathcal{S} = k \}$, the probabilities of certain forests under $\mathbb{Q}_x^{\lambda,T}$ have a natural interpretation in terms of a partition function.

\begin{thm} \label{thm:CSBP1}
Suppose under a probablity measure $\mathbb{Q}_x^{\lambda,T}$ we have a $d$-dimensional CSBP run until time $T$, and we take a $\lambda$-sample $\textbf{b} = \left(b_{i,j} : 1 \leq j \leq \mathcal{S}_i \right)$ of particles from the time $T$ population. Let $\mathsf{t} := (t_0,\ldots,t_m)$ be a mesh of times and let $\mathrm{For}_{\mathsf{t}}(\textbf{b})$ be the ancestral forest of the sample $b$ across $\mathsf{t}$. Then the probability that a $\lambda$-sample has $\{ \S = k \}$, and the resulting ancestral forest $\mathcal{H}$ occuring under $\mathbb{Q}_x^{\lambda,T}$ across the mesh $\mathsf{t}$ is some forest $\mathcal{H}$ is given by 
\begin{align} \label{eq:mesh}
\mathbb{Q}_x^{\lambda,T} \left( \S = k , \mathrm{For}_{\mathsf{t}}(\textbf{b}) = \mathcal{H} \right) =  \frac{ \lambda^k}{k!} e^{ - \langle x, u(T,\lambda) \rangle} \mathsf{E}_{\mathsf{t},x}  \left( \mathcal{H} , \lambda \right). 
\end{align}
Consequently, the conditional probability of an ancestral forest $\mathcal{H}$ occuring  given that the sample has size $k$ is given by  
\begin{align} \label{eq:gibbs}
\Q_x^{\lambda,T} \left( \mathrm{For}_{\mathsf{t}} ( b ) = \mathcal{H} \Big| \mathcal{S} = k \right) = \frac{ \mathsf{E}_{\mathsf{t},x} \left( \mathcal{H} , \lambda \right)  }{ \sum_{ \mathcal{I} \in \mathbb{H}^k(m) } \mathsf{E}_{\mathsf{t},x} \left( \mathcal{I} , \lambda \right) }.  
\end{align}
\end{thm}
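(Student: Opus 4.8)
The plan is to prove the joint identity \eqref{eq:mesh} and then read off \eqref{eq:gibbs} for free: summing \eqref{eq:mesh} over all $\mathcal{I} \in \mathbb{H}^k(m)$ gives $\Q_x^{\la,T}(\S = k)$, so dividing \eqref{eq:mesh} by this sum cancels the common prefactor $\frac{\la^k}{k!}e^{-\langle x, u(T,\la)\rangle}$ and leaves exactly the Gibbs expression in \eqref{eq:gibbs}. I would prove \eqref{eq:mesh} by induction on the number of generations $m$, keeping $x$, $T$ and the mesh arbitrary so that the Markov property is available at intermediate times.

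\emph{Base case $m=1$.} Here the mesh is $(0,T)$ and I would use the multidimensional analogue of the Foucart--Ma--Mallein observation. Marking each time-$0$ particle by its \emph{outdegree} $j \in \Z_{\geq 0}^d$ (the vector recording how many time-$T$ descendants of each type it contributes to the sample), the branching property shows that an infinitesimal type-$c$ clump of mass $\eps$ has outdegree probability generating function $e^{-\eps\, u_c(T, \la \odot (1-\xi))}$, where $(\la\odot(1-\xi))_i = \la_i(1-\xi_i)$. Extracting the coefficient of $\xi^j$ and Taylor expanding $u_c(T,\cdot)$ about $\la$ shows that, per unit type-$c$ mass, particles of outdegree $j\neq 0$ occur at Poisson rate $r_c^j(T,\la) = \frac{(-1)^{|j|-1}\la^j}{j!}D^j[u_c](T,\la)$, independently across $(c,j)$. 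Crucially $\sum_{j\neq 0} r_c^j(T,\la) = u_c(T,\la)$ (compare $\xi=\mathbf{1}$ with $\xi=\mathbf{0}$, using $u_c(T,0)=0$), which produces the void factor $e^{-\langle x, u(T,\la)\rangle}$. Writing $n_c^j$ for the number of type-$c$ roots of $\mathcal{H}$ with outdegree $j$ and accounting for the uniform relabelling of the sampled points, the Poisson count probabilities $\prod_{c,j}\frac{(x_c r_c^j)^{n_c^j}}{n_c^j!}e^{-x_c r_c^j}$ combine with the labelling factor $\frac{(\prod_{c,j}n_c^j!)(\prod_{\text{roots } v}\mu(v)!)}{\prod_i k_i!}$; the $\prod n_c^j!$ cancel, each $\mu(v)!\,r^{\mu(v)}_{\tau(v)}(T,\la)$ collapses to $(-1)^{|\mu(v)|-1}\la^{\mu(v)}D^{\mu(v)}_{\tau(v)}[u](T,\la)$, and collecting $x^\rho$, $\la^k$, $(-1)^{|k|-|\rho|}$ and $1/k!$ gives \eqref{eq:mesh} with the $m=1$ energy \eqref{eq:energy}.

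\emph{Inductive step.} Assuming the claim for $m-1$ generations, I would condition on $\mathcal{F}_{t_1}$ and split $\mathcal{H}$ into its top part $\mathcal{H}'$ (generations $1$ through $m$, rooted at the time-$t_1$ ancestors) and its generation-$0$ structure. By the Markov property the evolution on $[t_1,T]$ is a fresh CSBP from $\mathcal{Z}(t_1)$, so the inductive hypothesis applied to $\mathcal{H}'$ produces the factor $\frac{\la^k}{k!}e^{-\langle y, u(T-t_1,\la)\rangle}(-1)^{|k|-|\rho'|}y^{\rho'}\prod_{\ell=1}^{m-1}\prod_{v\in V^{(\ell)}}D^{\mu(v)}_{\tau(v)}[u](\Delta t_\ell, u(T-t_{\ell+1},\la))$, with $y=Z(t_1)$ and $\rho'$ the number of generation-$1$ vertices of each type. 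Since, given their number and types, the time-$t_1$ ancestors are uniform in $\mathcal{Z}(t_1)$, imposing the prescribed generation-$0$ grouping replaces $y^{\rho'}$ by $\prod_{v\in V^{(0)}}\prod_d y_{v,d}^{\mu(v)_d}$, where $y_{v,d}$ is the mass of type-$d$ time-$t_1$ descendants of the time-$0$ ancestor corresponding to the generation-$0$ vertex $v$. Taking $P_x$-expectation and using independence of the sub-populations spawned by distinct time-$0$ ancestors, the identity $\E[\prod_d y_d^{\mu_d}e^{-\langle y,\theta\rangle}] = \eps(-1)^{|\mu|-1}D^\mu[u_c](t_1,\theta)+O(\eps^2)$ for an $\eps$-clump (from differentiating $\E[e^{-\langle y,\theta\rangle}]=e^{-\eps u_c(t_1,\theta)}$) gives, with $\theta=u(T-t_1,\la)$, a per-root factor $(-1)^{|\mu(v)|-1}D^{\mu(v)}_{\tau(v)}[u](\Delta t_0, u(T-t_1,\la))$ and, after integrating the root positions, the factor $x^\rho$, while the remaining mass contributes $e^{-\langle x, u(t_1, u(T-t_1,\la))\rangle}=e^{-\langle x, u(T,\la)\rangle}$ by the semigroup identity \eqref{semigroup}. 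The signs telescope as $(-1)^{|k|-|\rho'|}(-1)^{|\rho'|-|\rho|}=(-1)^{|k|-|\rho|}$ and the $\Delta t_0$-term slots in as the $\ell=0$ factor of \eqref{eq:energy}, closing the induction.

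The main obstacle is the continuum and combinatorial bookkeeping in the inductive step. One must justify rigorously that, conditionally on their types and multiplicities, the time-$t_1$ ancestors are uniform in $\mathcal{Z}(t_1)$ (the Poissonian ancestral structure of $\Q^{\la,T}$), track the positions and descendant masses $y_{v,d}$ against the $\mathcal{F}_{t_1}$-genealogy, and check that every combinatorial factor arising from relabelling the sample and from matching concrete ancestors to the abstract vertices of $\mathcal{H}$ cancels to leave coefficient exactly $1$. The algebraic heart---why the nested arguments $u(T-t_{\ell+1},\la)$ and the prefactor appear---is driven entirely by \eqref{semigroup}; the delicate part is making the Poisson/uniform statements precise inside the CSBP genealogy, which I would isolate as the multidimensional Foucart--Ma--Mallein lemma, prove once, and reuse at every level.
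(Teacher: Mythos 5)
Your proposal is correct in outline and rests on the same core fact as the paper's proof --- the Poissonian ancestral structure of $\Q^{\lambda,T}_x$, i.e.\ that conditionally on $\mathcal{F}_s$ the ancestors of the sample with a given outdegree form independent Poisson processes with rates $r_i^\alpha$ (Corollary \ref{cor:poisson}, Lemma \ref{lem:ancestors}, Proposition \ref{prop:summary}) --- but it assembles the theorem along a genuinely different route. The paper does not induct on $m$: it shows that the marked ancestry is a time-inhomogeneous multitype Galton--Watson process (Lemma \ref{lem:GW}, obtained by setting $\xi = u(T-t,\lambda)$ in Proposition \ref{prop:summary}), deduces $\mathbb{Q}_x^{\lambda,T}\left( \mathrm{For}_{\mathsf{t}}(\textbf{b}) = \mathcal{H}\right) = C_{\mathcal{H}}\,\lambda^k e^{-\langle x,u(T,\lambda)\rangle}\mathsf{E}_{\mathsf{t},x}(\mathcal{H},\lambda)$ with an \emph{unidentified} combinatorial constant $C_{\mathcal{H}}$ (Corollary \ref{cor:tree}, via the same telescoping of $u(T-t_\ell,\lambda)$ factors you exploit), and then pins down $C_{\mathcal{H}}=1/k!$ indirectly, by computing $\mathbb{Q}_x^{\lambda,T}(\mathcal{S}=k)$ in two ways and invoking the multivariate Fa\`a di Bruno formula of \cite{JP} to identify the partition function. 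You obtain the constant directly instead: your $m=1$ base case (Poisson count probabilities times the relabelling factor $\prod_{c,j}n_c^j!\,\prod_v \mu(v)!/k!$) is a correct counting argument producing exactly $1/k!$ --- this is in fact the ``more direct'' alternative the paper mentions but declines to pursue before Corollary \ref{cor:tree} --- and your induction on $m$ (conditioning on $\mathcal{F}_{t_1}$, Markov property, replacing $y^{\rho'}$ by the clump masses $y_{v,d}$, semigroup identity giving both the nested arguments and the prefactor $e^{-\langle x,u(T,\lambda)\rangle}$) closes correctly, with the signs telescoping as you say. What your route buys is self-containedness (no appeal to \cite{JP}) and a transparent combinatorial explanation of the $1/k!$; what it costs is precisely the bookkeeping you flag: the inductive step needs the strengthened statement that, conditionally on the \emph{whole} top-part event $\{\mathrm{For}_{(t_1,\ldots,t_m)}(\textbf{b})=\mathcal{H}'\}$ (not merely on the number and types of generation-$1$ ancestors), the ancestor locations are i.i.d.\ uniform per type --- this is true because in the marked-Poisson description the descendant structures and the labelling permutations are independent of positions, but it must be isolated and proved, as you propose --- together with the $\varepsilon$-clump limit arguments. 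One last remark: the paper's Fa\`a di Bruno identity \eqref{eq:partition} is reused later to give the closed form of the denominator in \eqref{eq:gibbs} that feeds into Theorem \ref{thm:CSBP2}, so your route relocates rather than eliminates that ingredient if one wants the partition function in closed form; for Theorem \ref{thm:CSBP1} itself your argument suffices, since \eqref{eq:gibbs} follows from \eqref{eq:mesh} by summing over $\mathcal{I}\in\mathbb{H}^k(m)$ exactly as you say.
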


We now relate this formula to the $\mathbb{P}_x^{k,T}$ case. In particular, in Section \ref{sec:distributional proof} we use a recent generalisation of Fa\`a di Bruno's formula by Johnston and Prochno \cite{JP} to obtain the partition function of the tree energies. Namely, we show that the partition function $ \sum_{ \mathcal{I} \in \mathbb{H}^k(m) } \mathsf{E} \left( \mathcal{I} , \lambda \right)$ satisfies the relation 
\begin{align*}
 \sum_{ \mathcal{I} \in \mathbb{H}^k(m) } \mathsf{E}_{x , \mathsf{t}} \left( \mathcal{I} , \lambda \right) = \frac{ P_x \left[ Z(T)^k e^{ - \langle \lambda, Z(T) \rangle } \right] }{ P_x \left[ e^{ - \langle \lambda, Z(T) \rangle } \right]}.
\end{align*}
By combining this observation with Theorem \ref{thm:CSBP1} with the continuous Poissonization  Theorem \ref{thm:po c}, we immediately obtain the following result characterising the law of the random labelled forest $\mathcal{H}$ under the uniform measure $\mathbb{P}_x^{k,T}$. Recall the measure $\pi^k(\mathrm{d}\lambda)$ defined in Theorem \ref{thm:po c} and the preceding definition \eqref{solo def}, as well as the definition of the event $\{Z(T) \succ k \} := \{ \text{$Z_i(T) > 0$ for each $i$ such that $k_i > 0$} \}$. 

\begin{thm} \label{thm:CSBP2}
Suppose under a probability measure $\mathbb{P}_x^{k,T}$ we have a $d$-dimensional CSBP run until time $T$, and on the event $\{ Z_T \succ k \}$ we take a $k$-sample $\textbf{b} = \left( b_{i,j} : 1 \leq i \leq d, 1 \leq j \leq \mathcal{S}_i \right)$ of particles from the time $T$ population. Then the probability under $\mathbb{P}_x^{k,T}$ that the ancestral forest of this sample across a mesh $\mathsf{t} = (0,t_1,\ldots,t_{m-1},T)$ is equal to some forest $\mathcal{H}$ is given by 
\begin{align*}
\mathbb{P}_x^{k,T} \left( \mathrm{For}_{\mathsf{t}}( \textbf{b} ) = \mathcal{H} , \mathcal{Z}(T) \succ k \right) = \int_{\mathbb{R}_{ \geq 0}^d } \pi^k( \mathrm{d} \lambda) \frac{\lambda^k}{k!} e^{ - \langle x, u(T,\lambda) \rangle} \mathsf{E}_{\mathsf{t},x} \left( \mathcal{H} , \lambda \right).
\end{align*}

\end{thm}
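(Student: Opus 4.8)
The plan is to obtain this result as a direct consequence of the continuous Poissonization formula (Theorem \ref{thm:po c}) combined with the explicit $\mathbb{Q}$-formula of Theorem \ref{thm:CSBP1}; the only genuine work is to verify that the genealogical setup of the CSBP instantiates the abstract Poissonization framework of Section \ref{sec:cpoi}.

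First I would identify the abstract random $d$-type population with the time-$T$ population $\mathcal{Z}(T)$ of the CSBP, so that $Z = \mathrm{Leb}(\mathcal{Z}(T)) = Z(T)$ and the event $\{Z \succ k\}$ is precisely $\{\mathcal{Z}(T) \succ k\}$. The filtered space carrying the CSBP plays the role of $(\Omega,\mathcal{F})$, with $\mathcal{F}$ containing $\mathcal{F}_T$ and hence measuring the genealogy, while the auxiliary space carries the sampled points $b_{i,j}$ and the counts $\mathcal{S}_i$. With these identifications the extensions $\mathbb{P}_x^{k,T}$ and $\mathbb{Q}_x^{\lambda,T}$ defined just before Theorem \ref{thm:CSBP1} are exactly the extensions $\mathbb{P}^k$ and $\mathbb{Q}^\lambda$ of Theorem \ref{thm:po c}.

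Next I would take the event $A := \{\mathrm{For}_{\mathsf{t}}(\textbf{b}) = \mathcal{H}\}$ and note it is $\tilde{\mathcal{F}}$-measurable: given the ancestry relation of the process (encoded in $\mathcal{F}_T \subseteq \mathcal{F}$) and the sampled points $\textbf{b}$ (encoded in $\mathcal{F}'$), the forest $\mathrm{For}_{\mathsf{t}}(\textbf{b})$ is a deterministic function of the ancestry restricted to the sampled leaves across the mesh $\mathsf{t}$, so $A$ lies in the product $\sigma$-algebra. Applying Theorem \ref{thm:po c} with this $A$ gives
\begin{align*}
\mathbb{P}_x^{k,T}\!\left( \mathrm{For}_{\mathsf{t}}(\textbf{b}) = \mathcal{H}, \mathcal{Z}(T) \succ k \right) = \int_{\mathbb{R}_{\geq 0}^d} \pi^k(\mathrm{d}\lambda)\, \mathbb{Q}_x^{\lambda,T}\!\left( \mathrm{For}_{\mathsf{t}}(\textbf{b}) = \mathcal{H}, \mathcal{S} = k \right).
\end{align*}
I would then substitute the explicit expression from Theorem \ref{thm:CSBP1}, namely $\mathbb{Q}_x^{\lambda,T}(\mathcal{S}=k, \mathrm{For}_{\mathsf{t}}(\textbf{b}) = \mathcal{H}) = \frac{\lambda^k}{k!} e^{-\langle x, u(T,\lambda)\rangle} \mathsf{E}_{\mathsf{t},x}(\mathcal{H},\lambda)$, yielding exactly the claimed formula.

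The only obstacle is the bookkeeping of the first two steps, since once the framework is matched the result is a single substitution. I would also record the minor but clarifying observation that, because $\mathcal{H} \in \mathbb{H}^k(m)$ has its leaves in bijection with $[k]$, the forest-equality event $A$ can only occur on a complete sample of the correct sizes; thus under $\mathbb{Q}^\lambda$ the events $A$ and $\{A, \mathcal{S}=k\}$ coincide and match the event appearing in Theorem \ref{thm:CSBP1}, while under $\mathbb{P}^k$ the event $A$ forces $\{\mathcal{Z}(T) \succ k\}$ so that the left-hand side is well defined.
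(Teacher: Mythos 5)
Your proposal is correct and follows exactly the paper's own argument: the paper proves Theorem \ref{thm:CSBP2} precisely by applying Theorem \ref{thm:po c} to the event $A = \{\mathrm{For}_{\mathsf{t}}(\textbf{b}) = \mathcal{H}\}$ and substituting \eqref{eq:mesh} from Theorem \ref{thm:CSBP1}. The bookkeeping you supply (matching the CSBP setup to the abstract Poissonization framework, and noting that $\{\mathrm{For}_{\mathsf{t}}(\textbf{b}) = \mathcal{H}\}$ is a subevent of $\{\mathcal{S} = k\}$, itself a subevent of $\{\mathcal{Z}(T) \succ k\}$) is exactly what the paper leaves implicit, the latter observation appearing verbatim in the Notation remark of Section \ref{sec:distributional proof}.
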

\begin{proof}
Theorem \ref{thm:CSBP2} is an immediate consequence of combining \eqref{eq:mesh} of Theorem \ref{thm:CSBP1} with the continuous Poissonization result, Theorem \ref{thm:po c}.
\end{proof}

\subsection{The special case $m=1$}
We now close this section by studying the special case $m =1$, which amounts to looking at the multidimensional partition induced by the time $0$ ancestors of a sample take at time $T$ (without regard for the ancestral behaviour in the interim period $(0,T)$). When $\mathsf{t} = (t_0,t_1) = (0,T)$, the forest energies take the simple form
\begin{align} \label{eq:special1}
\mathsf{E}_{(0,T),x} ( \mathcal{H} , \lambda ) := (-1)^{k- \rho} x^\rho \prod_{ v \in V^{(0)} } D_{ \tau(v) }^{\mu(v)}[u] (T, \lambda),
\end{align}
where $V^{(0)}$ is simply the set of roots of trees in the forest $\mathcal{H}$. In particular, in the case $ m =1$, Theorem \ref{thm:CSBP2} reads as saying
\begin{align} \label{eq:m1}
\mathbb{P}_x^{k,T} \left( \mathrm{For}_{(0,t)}(\textbf{b}) = \mathcal{H} , \mathcal{Z}(T) \succ k \right) = (-1)^{k- \rho} x^\rho \int_{\mathbb{R}_{ \geq 0}^d } \pi^k( \mathrm{d} \lambda) \frac{ \lambda^k}{k!} e^{ - \langle x, u(T,\lambda) \rangle}   \prod_{ v \in V^{(0)} } D_{ \tau(v) }^{\mu(v)}[u] (T, \lambda).
\end{align}
Writing $t$ in place of $T$, the small-$t$ asymptotics of the formula \eqref{eq:m1} form the starting point of the subject of the next section, in which we study the local coalescent structure of multidimensional continuous-state branching process.

\section{The local coalescent structure of CSBPs} \label{sec:local}

\subsection{Multitype $\Lambda$-coalescents}

We saw in Section 1 that the coalescent structure of one-dimensional CSBPs may be described in terms of the $\Lambda$-coalescents. With a view here to developing an analogous correspondence for higher dimensional CSBPs with coalescent processes we require a notion of a multidimensional coalescent process, as introduced in work in preparation by the first author with Kyprianou and Rogers \cite{JKR}. Short of entering a full discussion of such processes here, we will nonetheless provide a brief outline.

A multidimensional coalescent process is a coalescent process taking values in the set of multidimensional partitions of a set $[k] := \{ (i,j) : 1 \leq i \leq d, 1 \leq j \leq k_i \}$ with the property that each block of the partition itself has a `type' in $\{1,\ldots,d\}$. Briefly, the authors of \cite{JKR} prove a multidimensional analogue of the main result in Pitman \cite{pit}, this multidimensional analogue stating that all `multidimensional exchangeable' coalescent processes with Markovian projections are constructed via the multitype $\Lambda$-coalescents. That is, they are governed by the rule that when the process has $k$ blocks (which is shorthand for saying that for each $i$ there are $k_i$ blocks of type $i$), any $\alpha$ of these blocks are merging to form a single block of type $c$ at a rate 
\begin{align*}
\lambda_{ \alpha, k}^{(c)} := \sum_{ j = 1, j \neq c}^d \ind_{ \alpha = \mathbf{e}_j } \kappa_{c,j}+ \ind_{ \alpha = 2 \mathbf{e}_c} 2 \beta_c + \int_{[0,1]^d} s^\alpha (1-s)^{k-\alpha} Q_c ( \mathrm{d} s),
\end{align*}
where $\kappa_{c,j}$ and $\beta_c$ are non-negative real numbers, and $Q_c$ is a fixed measure. In other words, any exchangeable multidimensional coalescent process with Markovian projections may experience three types of events: single blocks changing type, two blocks of a certain type merging to form a single block of that same type, or large events entailing a random proportion of all extant blocks in the process merging to form a single block of some type.

With the multitype $\Lambda$-coalescents now defined, we will see in  Section 4.3 that the coalescent structures of multidimensional CSBPs may be described in terms of multidimensional $\Lambda$-coalescents, where the merger parameters $\kappa_{c,j}, \beta_c, Q_c$ may be extracted from the characteristic exponent of the CSBP.

\subsection{Special forests in $\mathbb{H}^k(1)$}
Suppose for a small time $t$, under a probability measure $\mathbb{P}_x^{k,t}$ we take a $k$-sample from the time $t$ population of a multidimensional CSBP and study its ancestral forest across the simple mesh $\mathsf{t} = (0,t)$. Since $t$ is small, we expect that it is unlikely that any coalescent events have happened in the small time window $[0,t]$, and hence it is highly likely that each particle sampled at the small time $t$ is descended from an ancestor of the same type at time $0$, and that the ancestors of the different particles are distinct. In other words, we expect that 
\begin{align} \label{eq:trivial}
\lim_{t \to 0} \mathbb{P}_x^{k,t} \left( \mathrm{For}_{(0,t)}(\textbf{b}) = \mathcal{I}^k , \mathcal{Z}(t) \succ k \right) = 1,
\end{align}
where $\mathcal{I}^k$ is the trivial forest in $\mathbb{H}^k(1)$ in which each tree is a \emph{stick}: i.e. each tree has a single leaf $(i,j) \in [k]$, and the root of each tree has the same type as the leaf.

\begin{figure}[ht]

\begin{forest}
[, phantom, s sep = 1cm
[$1$,fill=col1,
[$b_{1,1}$,fill=col1,]
]
[$2$,fill=col2,
[$b_{2,1}$,fill=col2,]
]
[$2$,fill=col2,
[$b_{2,2}$,fill=col2,]
]
[$2$,fill=col2,
[$b_{2,3}$,fill=col2,]
]
[$3$,fill=col3,name=a,
[$b_{3,1}$,fill=col3,name=b,]
]
]
\end{forest}
  \caption{The trivial forest $\mathcal{I}^k$ for $k = (1,3,1)$.
 }\label{fig:trivial tree}
\end{figure}
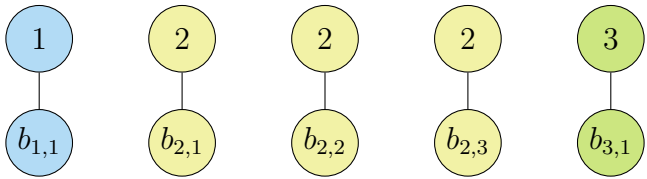
In fact, we will see in Section \ref{sec:smalltime} that given a general forest $\mathcal{H}$ in $\mathbb{H}_k(1)$, for small $t$ we have the asymptotics  
\begin{align} \label{eq:trivial}
\mathbb{P}_x^{k,t} \left( \mathrm{For}_{(0,t)}(\textbf{b}) = \mathcal{H} , \mathcal{Z}(t) \succ k \right) \text{ is of the order }  t^{ \# \{ \text{non-stick trees in $\mathcal{H}$} \} }.
\end{align}
In order to study the rates of the processes, we therefore concern ourselves with the forests in which exactly one tree is not a stick. We define the following special forest with one-non stick tree:

\begin{defn} \label{def:special}
For $\alpha \neq \mathbf{e}_c$, let $\mathcal{H}^k_{c, \alpha}$ be the forest in $\mathbb{H}^k(1)$ with a single non-stick tree of root type $c$ and leaves $\{ (i,j ) : 1 \leq i \leq d , 1 \leq j \leq \alpha_i \}$. 
\end{defn}

For an example of a tree of the form $\mathcal{H}^k_{c,\alpha}$, the reader is invited to consult Figure \ref{fig:tree2}, which displays the forest $\mathcal{H}^k_{c,\alpha}$ in the case $c = 3, \alpha = (3,1,1), k = (4,2,2)$. 

With the $\mathcal{H}^k_{c,\alpha}$ forests now defined, our results on the local coalescent structure of CSBPs amount to identifying the asymptotics  
\[
\lim_{t \to 0} \frac{1}{t} \mathbb{P}_x^{k,t} \left( \mathrm{For}_{(0,t)}(\textbf{b}) = \mathcal{H}^k_{c,\alpha} , \mathcal{Z}(t) \succ k \right).\] 

\subsection{Local coalescence in CSBPs}

The merger measures in the $\Lambda$-coalescents are defined in terms of a pushforward of the L\'evy measures associated with the underlying CSBP. To this end, recall from Section \ref{sec:csbpintro} that every continuous-state branching process is characterised by a branching mechanism $\psi: \mathbb{R}^d \to \mathbb{R}^d$, which is expressed in terms of a L\'evy-Khintchine integral representation involving drift coefficients $(\kappa_{c,j})_{1 \leq c,j \leq d}$, Brownian coefficients $(\beta_c)_{1 \leq c \leq d}$, and L\'evy measures $(\nu_c)_{1 \leq c \leq d}$, where each $\nu_c$ is a measure on $\mathbb{R}_{ \geq 0}^d$ satisfying some integrability conditions . 

Given $x \in \mathbb{R}_{ \geq 0}^d$, define the transformation $T_x: \mathbb{R}_{ >0}^d \to [0,1)^d$ by
\begin{align*}
T_x(r)_j = \frac{r_j}{r_j + x_j},
\end{align*}
and let $T^\#_x  \nu_c $ be the pushforward measure on $[0,1)^d$ of $\nu_c $ under $T_x$. That is, for a Borel subset $A$ of $[0,1)^d$, $T^\#_x  \nu_c (A) := \nu_c  \left( T_x^{-1} (A) \right)$.

We are now ready to state our main result on the local coalescent structure of CSBPs. 

\begin{thm} \label{thm:rate}
Suppose under $\mathbb{P}_x^{k,t}$, we take a $k$-sample from a CSBP at time $t$. Then for a non-zero multi-index $\alpha$ not equal to $\mathbf{e}_c$, the rate at which $\alpha$ particles of the $k$-sample are recently descended from a particle of type $c$ is given by 
\begin{align} \label{eq:local new}
&\lim_{t \to 0} \frac{1}{t} \mathbb{P}_x^{k,t} \left( \mathrm{For}_{(0,t)}(\textbf{b}) = \mathcal{H}^k_{c,\alpha} , \mathcal{Z}(t) \succ k \right)\\
& = \sum_{ j = 1, j \neq c}^d \ind_{ \alpha = \mathbf{e}_j} \kappa_{c,j} \frac{x_c}{x_j} +  \ind_{ \alpha = 2 \mathbf{e}_c} \frac{ 2 \beta_c }{x_c} + x_c \int_{ [0,1]^d} s^\alpha ( 1- s)^{ k- \alpha} T_x^{\#} \nu_c( \mathrm{d} s). \nonumber
\end{align} 
\end{thm}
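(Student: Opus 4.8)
The plan is to start from the explicit $m=1$ formula \eqref{eq:m1} of Theorem \ref{thm:CSBP2} applied to the forest $\mathcal{H}^k_{c,\alpha}$ and extract the leading small-$t$ behaviour of the integrand. For this forest the root set $V^{(0)}$ consists of a single type-$c$ vertex of outdegree $\alpha$ together with $k_i-\alpha_i$ sticks of type $i$ (each a root of outdegree $\mathbf{e}_i$), so its rootdegree is $\rho = k-\alpha+\mathbf{e}_c$, the sign is $(-1)^{k-\rho}=(-1)^{|\alpha|-1}$, and the product over roots in \eqref{eq:m1} factorises as $D_c^\alpha[u](t,\lambda)\prod_{i=1}^d(\partial_{\lambda_i}u_i(t,\lambda))^{k_i-\alpha_i}$. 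Thus I would first rewrite the left-hand side as
\begin{align*}
\mathbb{P}_x^{k,t}\left(\mathrm{For}_{(0,t)}(\textbf{b})=\mathcal{H}^k_{c,\alpha},\,\mathcal{Z}(t)\succ k\right)=(-1)^{|\alpha|-1}x^{k-\alpha+\mathbf{e}_c}\int_{\mathbb{R}_{\geq0}^d}\pi^k(\mathrm{d}\lambda)\frac{\lambda^k}{k!}e^{-\langle x,u(t,\lambda)\rangle}D_c^\alpha[u](t,\lambda)\prod_{i=1}^d(\partial_{\lambda_i}u_i(t,\lambda))^{k_i-\alpha_i}.
\end{align*}

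Next I would Taylor expand in $t$ using the defining PDE $\partial_t u(t,\lambda)=-\psi(u(t,\lambda))$ with $u(0,\lambda)=\lambda$, giving $u_c(t,\lambda)=\lambda_c-t\psi_c(\lambda)+o(t)$. The stick factors $\partial_{\lambda_i}u_i(t,\lambda)$ and the exponential $e^{-\langle x,u(t,\lambda)\rangle}$ converge to $1$ and $e^{-\langle x,\lambda\rangle}$, while the crucial factor obeys $D_c^\alpha[u](t,\lambda)=-t\,D^\alpha\psi_c(\lambda)+o(t)$; here the hypothesis $\alpha\neq\mathbf{e}_c$ is precisely what makes the $O(1)$ term $D^\alpha\lambda_c$ vanish, producing the single power of $t$ that matches the $t^{-1}$ scaling. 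Passing $t^{-1}$ and the limit inside the integral yields
\begin{align*}
\lim_{t\to0}\frac1t\,\mathbb{P}_x^{k,t}(\cdots)=(-1)^{|\alpha|}x^{k-\alpha+\mathbf{e}_c}\int_{\mathbb{R}_{\geq0}^d}\pi^k(\mathrm{d}\lambda)\frac{\lambda^k}{k!}e^{-\langle x,\lambda\rangle}D^\alpha\psi_c(\lambda).
\end{align*}

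The remaining work is algebraic. Differentiating the L\'evy--Khintchine representation \eqref{eq:lkrep} and again using $\alpha\neq\mathbf{e}_c$ (which kills both the compensator $\lambda_c r_c\ind_{r_c\leq1}$ and the $\mathbf{e}_c$-derivative of $\beta_c\lambda_c^2$) gives $(-1)^{|\alpha|}D^\alpha\psi_c(\lambda)=\sum_{j\neq c}\ind_{\alpha=\mathbf{e}_j}\kappa_{c,j}+\ind_{\alpha=2\mathbf{e}_c}2\beta_c+\int_{\mathbb{R}_{\geq0}^d}r^\alpha e^{-\langle\lambda,r\rangle}\nu_c(\mathrm{d}r)$, the sign flip turning the drift and Brownian contributions positive. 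I would then evaluate the $\lambda$-integrals using $\pi^k(\mathrm{d}\lambda)\frac{\lambda^k}{k!}=\prod_{i:k_i>0}\frac{\lambda_i^{k_i-1}}{(k_i-1)!}\mathrm{d}\lambda_i\prod_{i:k_i=0}\delta_0(\mathrm{d}\lambda_i)$, so that the elementary Gamma integral gives $\int\pi^k(\mathrm{d}\lambda)\frac{\lambda^k}{k!}e^{-\langle a,\lambda\rangle}=a^{-k}$ for $a\in\mathbb{R}_{>0}^d$. The constant ($\kappa$ and $\beta$) terms then produce $x^{k-\alpha+\mathbf{e}_c}x^{-k}=x_c/x^\alpha$, reproducing $\kappa_{c,j}x_c/x_j$ and $2\beta_c/x_c$. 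For the jump term I would apply Fubini (all integrands nonnegative) with $a=x+r$ to get $x^{k-\alpha+\mathbf{e}_c}\int r^\alpha(x+r)^{-k}\nu_c(\mathrm{d}r)$, and finally substitute $s=T_x(r)$, under which $r^\alpha(x+r)^{-k}=x^{\alpha-k}s^\alpha(1-s)^{k-\alpha}$, turning this into $x_c\int_{[0,1]^d}s^\alpha(1-s)^{k-\alpha}T_x^\#\nu_c(\mathrm{d}s)$, exactly the claimed merger measure.

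The main obstacle is the analytic justification in the second step: interchanging $\lim_{t\to0}t^{-1}$ with the integral over $\lambda$. This requires a dominated-convergence bound, uniform in small $t$, on $t^{-1}D_c^\alpha[u](t,\lambda)$ and on the stick factors, i.e.\ control of the higher $\lambda$-derivatives of $u(t,\cdot)$ near $t=0$. I expect to obtain such bounds from the semigroup identity \eqref{semigroup} together with the multivariate Fa\`a di Bruno structure already used for Theorem \ref{thm:CSBP1}, the monotonicity and sign properties of the derivatives of $u$, and the integrability condition \eqref{eq:integrability}, which simultaneously guarantees that the limiting integrand is $\pi^k(\mathrm{d}\lambda)\frac{\lambda^k}{k!}e^{-\langle x,\lambda\rangle}$-integrable and that the Fubini swap and the change of variables in the final step are legitimate.
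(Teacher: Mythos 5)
Your proposal is correct and follows essentially the same route as the paper: the paper's Lemma \ref{lem:swap} is exactly your Taylor-expansion step (the paper phrases it as computing $\frac{\partial}{\partial t}H(t)\big|_{t=0}$ by differentiating through the integral, using \eqref{eq:q1}--\eqref{eq:q4} where the hypothesis $\alpha\neq\mathbf{e}_c$ kills the $O(1)$ term), and the concluding algebra --- differentiating the L\'evy--Khintchine representation via Lemma \ref{techlem}, applying Fubini, the gamma identity \eqref{eq:gamma fact}, and the substitution $s=T_x(r)$ --- matches the paper's proof of Theorem \ref{thm:rate} line for line. If anything, you are more candid than the paper about the justification for interchanging $\lim_{t\to 0}t^{-1}$ with the $\lambda$-integral, which the paper dispatches by simply asserting continuous differentiability of the integrand $H(t,\lambda)$.
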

A few remarks interpreting Theorem \ref{thm:rate} are in order. Namely, equation \eqref{eq:local new} states that if the population size is given by $x \in \mathbb{R}_{ \geq 0}^d$ at time $t$, we observe the following events backwards in time during the small time window $[t,t+\mathrm{d}t)$:
\begin{itemize}
\item A single block of type $j$ changes to type $c$ at rate $\kappa_{c,j} \frac{x_c}{x_j}$. Let us highlight that the proportion of type $j$ particles that are recently descended from type $c$ particles is proportional to the ratio $\frac{x_c}{x_j}$ of the two populations. That is, if the type $c$ population is large compared to the type $j$ population at a certain point in time, then many of the type $j$ particles are recent descendents of type $c$ particles.
\item Two blocks of type $c$ merge to form a single block of type $c$ at rate $\frac{ 2 \beta_c }{x_c}$, which are precisely the dynamics observed by Donnelly and Kurtz relating the one-dimensional Feller diffusion to the one-dimensional Kingman coalescent \cite{DK}.
\item Large mergers of various blocks of various types merging to form a single block of type $c$ are governed by the $\Lambda$-coalescent type dynamics through the pushforward measure $T_x^{ \# } \nu_c$. 
\end{itemize}
We now take a moment to relate Theorem \ref{thm:rate} to one-dimensional CSBPs. Here, Theorem \ref{thm:rate} states that
\begin{align} \label{eq:manfred}
&\lim_{t \to 0} \frac{1}{t} \mathbb{P}_x^{k,t}\left( \pi_t = \left\{ \{1,\ldots,j \}, \{j+1\}, \ldots, \{k \} \right\} , Z(t) > 0 \right) \nonumber \\
&= \ind_{ j = 2} \frac{ 2 \beta}{ x} +x \int_{[0,1)} s^{ j } (1-s)^{k - j} T_x^{\#} \nu( \mathrm{d} s).
\end{align}
It is a straightforward calculation that Theorem 1.2 of the introduction is a consequence of neatly packing \eqref{eq:manfred} into a single integral by setting $\Lambda^\psi( x, \mathrm{d}s) = \frac{2 \beta}{ x} \delta_0( \mathrm{d} s ) + x s^2 T_x^{ \# } \nu( \mathrm{d} s)$. 

Finally, we conclude this section by taking a moment to discuss the application of Theorem \ref{thm:rate} to the simpliest higher dimensional CSBP, the multidimensional Feller diffusion, the $d$-dimensional continuous state branching process with $\nu_c = 0$ for each $c \in \{1,\ldots , d\}$, and solution to the stochastic differential equation \eqref{feller SDE}. The components of the branching mechanism of Feller diffusion are given by 
 \begin{align} \label{eq:feller mech}
\psi_c(\la) = - \sum_{ j = 1}^d \kappa_{c,j} \lambda_j + \beta_c \lambda_c^2
\end{align}
In particular, by plugging $\nu_c = 0$ into Theorem \ref{thm:rate}, we recover the following result, which may be regarded as a multidimensional analogue of Theorem 5.1 of Donnelly and Kurtz \cite{DK}.
\begin{cor} \label{cor:DK}
Under a probability measure $\mathbb{P}_x^{k,t}$, suppose we have a Feller diffusion $(Z_u)_{0 \leq u \leq t}$ starting from $x \in \mathbb{R}_{\geq 0}^d$ and with branching mechanism \eqref{eq:feller mech}. Suppose first we sample two particles $b_{c,1}$ and $b_{c,2}$ uniformly and independently from the type $c$ population at time $t$, and let $A(c,c)$ be the event that the $b_{c,1}$ and $b_{c,2}$ are descended from the same ancestor of type $c$ in the time zero population. Then
\begin{align*}
\lim_{t \to 0} \frac{1}{t} \mathbb{P}_x^{k,t} \left( A(c,c) , Z_c(t) > 0 \right) = \frac{2 \beta_c}{ x_c}.
\end{align*}
Alternatively, again under $\mathbb{P}_x^{k,t}$, sample a single particle $b_{j,1}$ of type $j$ from the type $j$ population at time $t$, and let $B(c,j)$ be the event that $b_{j,1}$ is descended from a particle of type $c$ in the time zero population. Then
\begin{align*}
\lim_{t \to 0} \frac{1}{t} \mathbb{P}_x^{k,t} \left( B(c,j) , Z_j(t) > 0 \right) = \frac{x_c}{x_j } \kappa_{c,j}.
\end{align*}
\end{cor}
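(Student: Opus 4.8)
The plan is to derive both limits as immediate specialisations of Theorem \ref{thm:rate}, the only substantive work being to match each of the events $A(c,c)$ and $B(c,j)$ to one of the special forests $\mathcal{H}^k_{c,\alpha}$ of Definition \ref{def:special}, and then to evaluate the right-hand side of \eqref{eq:local new} after substituting $\nu_c = 0$, which holds for every $c$ in the Feller case. The vanishing of each $\nu_c$ kills the integral term in \eqref{eq:local new} outright, so in both claims only the two indicator terms can survive, and the computation reduces to checking which multi-index identities hold.

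For the first claim I would take the sample to consist of exactly the two type-$c$ particles $b_{c,1}, b_{c,2}$, so that $k = 2\mathbf{e}_c$. With only two leaves, both of type $c$, the \emph{unique} non-stick forest realising $A(c,c)$ --- that is, the event that the common ancestor has type $c$ --- is the one whose single non-trivial tree has a type-$c$ root and the two type-$c$ leaves, namely $\mathcal{H}^{2\mathbf{e}_c}_{c,2\mathbf{e}_c}$. Since $\{\mathcal{Z}(t) \succ 2\mathbf{e}_c\} = \{Z_c(t) > 0\}$, the event in the corollary coincides exactly with $\{\mathrm{For}_{(0,t)}(\textbf{b}) = \mathcal{H}^{2\mathbf{e}_c}_{c,2\mathbf{e}_c}, \mathcal{Z}(t) \succ 2\mathbf{e}_c\}$. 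Applying Theorem \ref{thm:rate} with $\alpha = 2\mathbf{e}_c$, the drift indicator $\ind_{\alpha = \mathbf{e}_j}$ is zero for every $j$ because $2\mathbf{e}_c$ is not a unit multi-index, while $\ind_{\alpha = 2\mathbf{e}_c} = 1$; with $\nu_c = 0$ this leaves $2\beta_c/x_c$, as required.

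For the second claim I would take the sample to be the single type-$j$ particle $b_{j,1}$, so $k = \mathbf{e}_j$, and choose $\alpha = \mathbf{e}_j$ (which is legitimate since $\alpha \neq \mathbf{e}_c$ as $j \neq c$). The forest $\mathcal{H}^{\mathbf{e}_j}_{c,\mathbf{e}_j}$ is the single tree with a type-$c$ root and one type-$j$ leaf, which is precisely the realisation of $B(c,j)$, and $\{\mathcal{Z}(t) \succ \mathbf{e}_j\} = \{Z_j(t) > 0\}$. Applying Theorem \ref{thm:rate}, the Brownian indicator $\ind_{\alpha = 2\mathbf{e}_c}$ vanishes, and in the drift sum $\sum_{j' \neq c} \ind_{\mathbf{e}_j = \mathbf{e}_{j'}} \kappa_{c,j'} x_c / x_{j'}$ only the term $j' = j$ survives (it is present because $j \neq c$); with $\nu_c = 0$ this yields $\kappa_{c,j}\, x_c / x_j$.

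The argument involves no genuine obstacle beyond this bookkeeping; the one point deserving a line of care is the assertion that, for the small samples $k = 2\mathbf{e}_c$ and $k = \mathbf{e}_j$, the described coalescence events coincide \emph{exactly} with single special forests rather than merely being implied by them. This is immediate here because with only two (respectively one) leaves there is a unique non-stick forest realising the event once the ancestor type is prescribed, so no summation over compatible forests is needed and Theorem \ref{thm:rate} applies verbatim.
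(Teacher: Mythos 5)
Your proposal is correct and takes essentially the same route as the paper, which states Corollary \ref{cor:DK} as an immediate consequence of plugging $\nu_c = 0$ into Theorem \ref{thm:rate}. Your identification of $A(c,c)$ with the single forest $\mathcal{H}^{2\mathbf{e}_c}_{c,2\mathbf{e}_c}$ and of $B(c,j)$ with $\mathcal{H}^{\mathbf{e}_j}_{c,\mathbf{e}_j}$, together with matching $\{Z_c(t)>0\}$ and $\{Z_j(t)>0\}$ to the events $\{\mathcal{Z}(t) \succ k\}$ for $k = 2\mathbf{e}_c$ and $k = \mathbf{e}_j$ respectively, is exactly the bookkeeping the paper leaves implicit.
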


That completes the section on the local coalescent structure of multidimensional CSBPs. The remainder of the paper is dedicated to proving the results stated so far. We begin in Section \ref{sec:po proof} by proving the results stated in Section \ref{sec:po}. 
\section{Proofs of the Poissonization theorems} \label{sec:po proof}
In this section we prove our main Poissonization results, Theorem \ref{thm:po c} and Theorem \ref{thm:po d}. Throughout this section, for elements $\alpha, \beta$ of $\mathbb{R}_{ \geq 0}^d$ we will use the notation $\alpha \succ \beta$ to denote that $\alpha_i > 0$ for each $i$ such that $\beta_i > 0$. 

\subsection{Continuous Poissonization: Proof} \label{sec:cproofs}
\bp[Proof of Theorem \ref{thm:po c}]
Recall the definitions introduced in Section \ref{sec:cpoi}. Recall that we have the product sigma algebra $\tF := \mathcal{F} \otimes \mathcal{F}'$. We note that $\mathcal{F}$ may be associated with a subalgebra $\mathcal{F}_0$ of the product algebra $\tF := \F \otimes \F'$ by setting 
\begin{align*}
\mathcal{F}_0 := \{ B \times \Omega' : B \in \mathcal{F} \}. 
\end{align*}
For measures $\rho$ on $\mathbb{R}^d_{\geq 0}$, and $k \in \mathbb{Z}_{\geq 0}^d$, define the measure $R_{\rho,k}$ on $\tF$ by
\begin{align*}
R_{\rho,k}(A) := \int_{\mathbb{R}_{\geq 0}^d} \rho( \mathrm{d} \lambda) \Q^{\lambda}(A, \mathcal{S} = k ), ~~~ A \in \tF. 
\end{align*}
Theorem \ref{thm:po c} is equivalent to the statement that  
\begin{align} \label{super}
R_{\rho,k}(A) = \P^k(A, Z \succ k) 
\end{align}
holds for every $A$ in $\tF$ if and only if 
\begin{align} \label{keymixer} 
\rho( \mathrm{d} \lambda ) =  \pi^k( \mathrm{d} \lambda).
\end{align}
The remainder of the proof is structured as follows. First we prove that if $\rho(  \mathrm{d} \lambda) = \pi^k ( \mathrm{d} \lambda)$, then \eqref{super} holds for sets $A$ in the subalgebra $\F_0$ of $\tF$. Then we use properties of Poisson processes to show that when $\rho( \mathrm{d} \lambda) = \pi^k( \mathrm{d} \lambda) $, \eqref{super} holds on all $A$ in $\mathcal{F}$, completing one direction of the proof. We then use a Laplace transform argument to establish the uniqueness of $\pi^k$, proving the other direction.

First we show that with $\rho( \mathrm{d} \lambda) = \pi^k(  \mathrm{d} \lambda)$, \eqref{super} holds for all $A \in \mathcal{F}_0$. To this end, we note that conditionally on $\F_0$,
\begin{align} \label{Q cond}
\Q^\lambda(\mathcal{S} = k | \mathcal{F}_0 ) = \prod_{i=1}^d \frac{( \lambda_i  Z_i )^{k_i}}{k_i!} e^{ - \lambda_i Z_i }.
\end{align}
Now for $A \in \mathcal{F}_0$, using \eqref{Q cond} with the definition of conditional expectation to obtain the second equality below, and the fact that $\Q^\lambda = P$ on $\mathcal{F}_0$ in the third, for any measure $\rho$ on $\mathbb{R}_{ \geq 0}^d$ we have
\begin{align}
R_{\rho,k}[\ind_A]  &:= \int_{\mathbb{R}_{\geq 0}^d}\rho(  \mathrm{d} \lambda) \Q^{\lambda} \left( A,  \mathcal{S} = k \right) \nonumber   \\
&= \int_{\mathbb{R}_{\geq 0}^d}\rho(  \mathrm{d} \lambda) \Q^{\lambda} \left[ \ind_A \prod_{i=1}^d \frac{( \lambda_i  Z_i )^{k_i}}{k_i!} e^{ - \lambda_i Z_i }  \right] \nonumber   \\ 
&= \int_{\mathbb{R}_{\geq 0}^d} \rho(  \mathrm{d} \lambda)  P \left[ \ind_A \prod_{i=1}^d \frac{( \lambda_i  Z_i )^{k_i}}{k_i!} e^{ - \lambda_i Z_i } \right] \nonumber \\
&=  P \left[ \ind_A  \int_{\mathbb{R}_{\geq 0}^d} \rho(  \mathrm{d} \lambda)    \prod_{i=1}^d \frac{( \lambda_i  Z_i )^{k_i}}{k_i!} e^{ - \lambda_i Z_i } \right], \label{mixer}
\end{align}
where in the final equality above we used Fubini's theorem (the integrands are non-negative). Consider plugging $\rho = \pi^k$ into \eqref{mixer}. Indeed, for $z \in \mathbb{R}_{ \geq 0}$, and integer $j$, with $\pi^j( \mathrm{d} \lambda)$ as in \eqref{solo def} we have
\begin{align} \label{eq:gamma id}
\int_{ \mathbb{R}_{ \geq 0 }} \pi^j( \mathrm{d} \lambda) \frac{ ( \lambda z)^j}{ j!} e^{ - \lambda z} = \ind_{ j = 0} + \ind_{ j > 0, z> 0}.
\end{align}
(The case $j = 0$ above is immediate, the case $j > 0$ follows from the gamma integral.) 
It now follows that for $z \in \mathbb{R}_{\geq 0}^d$, using the fact that $\pi^k( \mathrm{d} \lambda) = \pi^{k_1} ( \mathrm{d} \lambda_1) \ldots \pi^{k_d}(  \mathrm{d} \lambda_d)$, we have 
\begin{align}  \label{gamma} 
\int_{\mathbb{R}_{\geq 0}^d} \pi^k(  \mathrm{d} \lambda)    \prod_{i=1}^d \frac{( \lambda_i  z_i )^{k_i}}{k_i!} e^{ - \lambda_i z_i } = \prod_{ i = 1}^d \int_{\mathbb{R}_{\geq 0}}  \pi^{k_i}(  \mathrm{d} \lambda_i) \frac{( \lambda_i  z_i )^{k_i}}{k_i!} e^{ - \lambda_i z_i } = \prod_{ i : k_i > 0 } \ind_{ z_ i > 0 } = \ind_{ Z \succ k },
\end{align}
where $\{ Z \succ k \}$ is given in \eqref{J def}.

Plugging \eqref{gamma} into \eqref{mixer} in the case $\rho = \pi^k$, it follows that for all non-negative $\mathcal{F}_0$ measurable random variables we have 
\begin{align*}
R_{\pi^k,k}(A)  = P \left( A ,  Z \succ k \right),
\end{align*} 
proving that when $\rho= \pi^k$,  \eqref{super} holds for all $A$ in the coarser $\sigma$-algebra $\mathcal{F}_0$.

We now show that with $\rho = \pi^k$, \eqref{super} holds for all $A$ in the richer $\sigma$-algebra $\tF$. Indeed, suppose we take a $\lambda$-sample from a $d$-type population $\mathcal{Z} = \bigcup_{ i = 1}^d \{i \} \times \mathcal{Z}_i$, and let $\mathcal{S}_i$ be the number of points sampled from $\{i\} \times \mathcal{Z}_i$. For each $i$, conditional on the event $\{ \mathcal{S}_i = k_i \}$, these points are uniformly and independently sampled from $\{ i \} \times \mathcal{Z}_i$. Since, for any $\rho$, the measure $R_{\rho,k}$ is supported on the event $\{ \mathcal{S} = k\}$, we have
\begin{align} \label{conditional}
R_{\rho,k}(A | \mathcal{F}_0, \{ \mathcal{S} = k  \} ) =  \P^k(A | \mathcal{F}_0, \{ Z \succ k  \} ).
\end{align}
Now suppose $B$ in $\tF$ such that $B \subseteq \{ Z \succ k  \}$. Now when $\rho = \pi^k$, using the tower property to obtain the first inequality below, \eqref{conditional} as well as the fact that \eqref{super} holds for all $A$ in $\mathcal{F}_0$ to obtain the second, and then the tower property again to obtain the third, and finally the fact that $B$ is a subevent of $\{ Z \succ k \}$ to obtain the fourth, we have
\begin{align*}
R_{\pi^k,k}(B) = R_{\pi^k,k} [ R_{\pi^k,k} (B | \mathcal{F} ) ] =  \P^k[ \P^k(B | \mathcal{F} ) ] = \P^k(B) = \P^k(B, Z \succ k ).  
\end{align*}
proving that \eqref{super} holds for all $B$ in $\tF$. 

It remains to prove the uniqueness of the mixture measure. Suppose for a measure $\rho$ on $\mathbb{R}_{ \geq 0}^d$, \eqref{super} holds for every $A \in \tF$. Then by \eqref{mixer}, for any $\theta \in \mathbb{R}_{ \geq 0}^d$ we must have
\begin{align}
P [ e^{ - \langle \theta, Z \rangle } \ind_{ Z \succ k } ] =P \left[e^{ - \langle \theta, Z \rangle }   \int_{\mathbb{R}_{\geq 0}^d } \rho( \mathrm{d} \lambda)   \prod_{i=1}^d \frac{( \lambda_i  Z_i )^{k_i}}{k_i!} e^{ - \lambda_i Z_i }   \right]~ \text{for every $\theta$ in $\mathbb{R}_{\geq 0}^d$}. \label{theta eq}
\end{align}
Since the span of functions of the form $e^{ - \langle \theta, ~\cdot~ \rangle}$ is dense in $\mathcal{C}_0(\mathbb{R}_{\geq 0}^d)$, \eqref{theta eq} is equivalent to 
\begin{align} \label{z eq}
 \int_{\mathbb{R}_{\geq 0}^d } \rho( \mathrm{d} \lambda)   \prod_{i=1}^d \frac{( \lambda_i  z_i )^{k_i}}{k_i!} e^{ - \lambda_i z_i }  = \ind_{ z \succ k} ~~ \text{for every $z $ in $ \mathbb{R}_{\geq 0}^d$}. 
\end{align}
In particular \eqref{z eq} determines the Laplace transform of the measure $\hat{\rho}( \mathrm{d} \lambda) := \prod_{i=1}^d \lambda_i^{k_i} \rho( \mathrm{d} \lambda)$, and hence determines $\rho$.
\ep

We now work towards a proof of Theorem \ref{thm:c}. We start with a lemma concerning Poissonized probability distributions on $\mathbb{R}_{ \geq 0}^d$. Recall that for $k = (k_1,\ldots,k_d)$ in $\mathbb{Z}_{ \geq 0}^d$ and $x \in \mathbb{R}_{ \geq 0}^d$ we write $x \succ k$ if $x_i > 0$ whenever $k_i > 0$, $k! := k_1! \ldots k_d! $, and whenever $x \succ k$ define
\begin{align*}
x^k := x_1^{k_1}\ldots x_d^{k_d}.
\end{align*}
Finally, for $\lambda, w \in \mathbb{R}_{ \geq 0}^d$ we write $\lambda w$ for the element of $\mathbb{R}_{ \geq 0}^d$ given by $(\lambda w)_i := \lambda_i w_i$. 

\begin{lem}
Let $k = (k_1,\ldots,k_d)$ be a vector of non-negative integers, and under a probability measure $P$ let $Z$ be an $\mathbb{R}_{ \geq 0}^d$-valued random variable satisfying $\mathbb{P} \left( Z \succ k   \right) > 0$. Let $\pi^k( \mathrm{d} \lambda)$ be as in Theorem \ref{thm:po c}. Then
\begin{align*}
\Pi^k( \mathrm{d} \lambda) := P\left[ \frac{ (\lambda Z)^k }{ k!} e^{ - \langle \lambda, Z \rangle} \Big| Z \succ k  \right]  \pi^k(  \mathrm{d} \lambda)  
\end{align*}
is a probability measure on $\mathbb{R}_{ \geq 0}^d$.
\end{lem}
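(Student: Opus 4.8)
The plan is to check the two defining properties of a probability measure: non-negativity and unit total mass. Non-negativity is immediate, since $\Pi^k(\mathrm{d}\lambda)$ is the product of the non-negative measure $\pi^k(\mathrm{d}\lambda)$ with the manifestly non-negative quantity $P[\frac{(\lambda Z)^k}{k!} e^{-\langle \lambda, Z \rangle} \mid Z \succ k]$, which is well-defined precisely because we assume $P(Z \succ k) > 0$. All the work therefore goes into showing that $\int_{\mathbb{R}_{\geq 0}^d} \Pi^k(\mathrm{d}\lambda) = 1$.

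The central observation is that the normalising identity \eqref{gamma}, already established in the proof of Theorem \ref{thm:po c}, does all the heavy lifting. That identity states that for each deterministic $z \in \mathbb{R}_{\geq 0}^d$ one has $\int_{\mathbb{R}_{\geq 0}^d} \pi^k(\mathrm{d}\lambda) \prod_{i=1}^d \frac{(\lambda_i z_i)^{k_i}}{k_i!} e^{-\lambda_i z_i} = \ind_{z \succ k}$. First I would apply this identity with $z$ replaced by the realised value of the random vector $Z$; since the factorised integrand is exactly $\frac{(\lambda Z)^k}{k!} e^{-\langle \lambda, Z \rangle}$, this yields the almost-sure pointwise identity $\int_{\mathbb{R}_{\geq 0}^d} \pi^k(\mathrm{d}\lambda) \frac{(\lambda Z)^k}{k!} e^{-\langle \lambda, Z \rangle} = \ind_{Z \succ k}$.

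It then remains to take the conditional expectation $P[\,\cdot \mid Z \succ k]$ of both sides and to interchange it with the $\lambda$-integral. Because the integrand is non-negative, Tonelli's theorem justifies the interchange between the expectation (an integral over the underlying probability space) and the integral against $\pi^k$, giving $\int_{\mathbb{R}_{\geq 0}^d} \Pi^k(\mathrm{d}\lambda) = P[\ind_{Z \succ k} \mid Z \succ k] = 1$, where the final equality uses $P(Z \succ k) > 0$. The only point demanding any care is this interchange, but since everything in sight is non-negative it is a routine application of Tonelli, so I do not anticipate a genuine obstacle. In effect the lemma is a direct corollary of the computation \eqref{gamma}, repackaged so that the random vector $Z$ plays the role of the dummy variable $z$.
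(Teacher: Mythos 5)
Your proposal is correct and follows essentially the same route as the paper's proof: non-negativity is immediate, and unit mass follows from a Fubini--Tonelli interchange of the conditional expectation with the $\pi^k$-integral combined with the gamma integral identity (you invoke the product form \eqref{gamma}, while the paper factorises and applies the one-dimensional identity \eqref{eq:gamma id}, which is the same computation). No gaps.
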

\begin{proof}
The density $\Pi^k( \mathrm{d} \lambda)$ is clearly non-negative on $\mathbb{R}_{ \geq 0}^d$, so it remains to show it has unit mass. Using Fubini's theorem to obtain the second equality below, 
\begin{align*}
\int_{\mathbb{R}_{ \geq 0}^d } \Pi^k( \mathrm{d} \lambda) &:= \int_{ \mathbb{R}_{ \geq 0 }^d } P\left[ \frac{ (\lambda Z)^k }{ k!} e^{ - \langle \lambda, Z \rangle} \Big| Z \succ k  \right]  \pi^k(  \mathrm{d} \lambda)   \\
 &=   P\left[  \int_{ \mathbb{R}_{ \geq 0 }^d }\frac{ (\lambda Z)^k }{ k!} e^{ - \langle \lambda, Z \rangle}   \pi^k(  \mathrm{d} \lambda)  \Big| Z \succ k  \right]  \\
&=    P\left[  \prod_{ i =1}^d \int_{\mathbb{R}_{ \geq 0}^d} \pi^{k_i}( \mathrm{d} \lambda_i) \frac{ (\lambda_i Z_i )^{k_i} }{k_i!}  e^{ - \lambda_i Z_i }  \Big| Z \succ k  \right]  .
\end{align*}
The proof now follows from using \eqref{eq:gamma id}.
\end{proof}

We are now ready to prove Theorem \ref{thm:c}. Recall from the proof of Theorem \ref{thm:po c} the definition of $\mathcal{F}_0$.

\begin{proof}[Proof of Theorem \ref{thm:c}]
Consider computing $\Q^\lambda( \S = k )$. Using the fact that $\Q^{\lambda} = P$ on the subalgebra $\mathcal{F}_0$ of $\mathcal{F} \otimes \mathcal{F}'$ in the second equality below, and the fact that conditionally on $Z := \mathrm{Leb}(\mathcal{Z}_i)$, the variables $\S_i$ are conditionally Poisson distributed with parameter $\lambda_i Z_i$ to obtain the third, we have 
\begin{align*}
\Q^{\lambda} \left( \mathcal{S} = k \right) &= \Q^{\lambda} \left[ \Q^\lambda \left( \mathcal{S} = k \big| Z_1,\ldots,Z_d \right) \right]\\
&= P \left[ \Q^\lambda \left( \mathcal{S} = k \big| Z_1,\ldots,Z_d \right) \right]\\
&= P \left[ \prod_{ i = 1}^d \frac{ ( \lambda_i Z_i )^{k_i} }{ k_i!} e^{ - \lambda_i Z_i } \right] =  P\left[ \frac{ (\lambda Z)^k }{ k!} e^{ - \langle \lambda, Z \rangle} \right]  .
\end{align*}
Now expand
\begin{align*}
\Q^\lambda \left( A, \S = k  \right)  = \Q^\lambda \left( A \big| \S = k \right) \Q^\lambda \left( \S = k \right)
\end{align*}
in \eqref{c multi eq}, and divide through by $\mathbb{P} \left( Z \succ k  \right)$. 
\end{proof}

\subsection{Discrete Poissonization: Proof}
Since the proof is basically identical to the continuous case -- with the beta integral in the discrete case playing the role of the gamma integral in the continuous case -- we content ourselves with simply outlining the key steps here. 
\bp[Proof of Theorem \ref{thm:po d}]
Recall the definitions from Section \ref{disc poiss}. Let $\rho$ be a measure on $[0,1)^d$ and define the measure $R_{\rho,k}$ on $\tF$ by
\begin{align*}
R_{\rho,k}(A) := \int_{[0,1)^d} \rho(dp) \Q^{p}(A, \mathcal{S} = k ), ~~~ A \in \tF. 
\end{align*}
Our aim is to show that $R_{\bar{\pi}^k,k}(A) = \P^k(A, N \geq k)$ for every $A \in \tF$, and that $\bar{\pi}^k$ is the unique measure with this property.

Again, first we show that $R_{\bar{\pi}^k,k}(A) = \P^k(A)$ for every $A$ in the coarser $\sigma$-algebra $\mathcal{F}_0$. To this end, note that for $p \in [0,1)^d$, $\Q^p( \mathcal{S} = k | \mathcal{F}_0) = \prod_{i=1}^d \ind_{N_i \geq k_i} \binom{N_i}{k_i} p_i^{k_i} (1 - p_i)^{N_i - k_i}$, and hence for any $A \in \mathcal{F}_0$ and any measure $\rho$ on $[0,1)^d$, we have
\begin{align} \label{k prod}
R_{\rho,k} (A)  := P \left[ \ind_A \int_{[0,1)^d} \rho(d p)  \prod_{i=1}^d \ind_{N_i \geq k_i}  \binom{N_i}{k_i} p_i^{k_i} (1 - p_i)^{N_i - k_i} \right] .
\end{align}
Plugging in $\rho(dp) = \bar{\pi}^k(dp)$ in \eqref{k prod}, and using the beta integral $\int_0^1 dp ~ p^x(1-p)^y = \frac{ x! y!}{ (x+ y + 1)!}$ we obtain
\begin{align} \label{k prod 2}
R_{\bar{\pi}^k,k} (A)  := P \left( A , N \geq k \right) ~ \text{for all $A \in \mathcal{F}_0$}. 
\end{align}
Finally, note that given that a labelled Bernoulli process has chosen $k$ points in a set $H$, these $k$ points are sampled uniformly and without replacement from $H$. Since $R^k_{\bar{\pi}^k}$ is supported on $\{\mathcal{S} = k\}$, it follows that
\begin{align} \label{k prod 3}
R_{ \bar{\pi}^k,k}(  A | \mathcal{F}_0) = \P^k ( A | \mathcal{F}_0) ~ \text{for all $A$ in $\mathcal{F}_0$}. 
\end{align}
Combining \eqref{k prod 2} and \eqref{k prod 3} we show that \eqref{d po eq} holds for all $A$ in $\tF$, which completes one direction of the proof.

The uniqueness of $\bar{\pi}^k(dp)$ follows from a probability generating function argument similar to the one used in the continuous case. 
\ep

\section{Proof of Theorem \ref{thm:CSBP1}} \label{sec:distributional proof}

In this section we prove Theorem \ref{thm:CSBP1} concerning the law of the random forests $\mathrm{For}_{\mathsf{t}}(\textbf{b})$ occuring when taking $\lambda$-samples from multidimensional continuous-state branching process. 
\subsection{Uniform and Poisson sampling from multidimensional continuous state branching processes}
Suppose under a probability space $(\Omega,\mathcal{F}_T,P_x)$, we have a $d$-dimensional continuous-state branching process $(Z(t))_{0 \leq t \leq T}$ starting from $x \in \mathbb{R}_{\geq 0}^d$ and with branching mechanism $\psi:\mathbb{R}_{\geq 0}^d \to \mathbb{R}_{ \geq 0}^d$. We recall from Section \ref{sec:csbpintro} that 
\begin{align} \label{branch exp 2}
P_x \left[ e^{ - \langle \lambda, Z(t) \rangle } \right] = e^{ - \langle x , u(t,\lambda) \rangle } 
\end{align}
where $u(T,\lambda)$ is the solution to the partial differential equation
\begin{align*}
\frac{ \partial }{ \partial t} u(t,\lambda) = - \psi \left( u(t,\lambda) \right).
\end{align*}
Recall the probability measure $\mathbb{Q}_x^{\lambda,T}$ extending $P_x$ to a richer measurable space $(\tO, \tF_T)$ carrying random variables $\mathcal{S}$ and $b$, where $\mathcal{S}$ takes values in $\mathbb{Z}_{ \geq 0}^d$, and $ \textbf{b} = \left(b_{i,j} : 1 \leq i \leq d , 1 \leq j \leq \mathcal{S}_i\right)$ is a $\lambda$-sample on $\mathcal{Z}(T)$: a random array such that each $b_{i,j}$ is an element of $\mathcal{Z}_i(T)$.

\begin{notation}
We would like to make a quick remark about notation. The event $\{ \mathrm{For}_{\mathsf{t}}(\textbf{b}) = \mathcal{H} \}$ is a subevent of $\{ \S = k \}$, which itself is a subevent of $\{ \mathcal{Z}(T) \succ k \}$. In particular, in the remainder of the paper we will write $\mathbb{Q}_x^{\lambda,T} \left( \mathrm{For}_{(0,t)}(\textbf{b})= \mathcal{H} \right) = \mathbb{Q}_x^{\lambda,T} \left( \mathrm{For}_{(0,t)}(\textbf{b})= \mathcal{H} , \S = k \right) $ and $\mathbb{P}_x^{k,T} \left( \mathrm{For}_{(0,t)}(\textbf{b})= \mathcal{H} \right) = \mathbb{P}_x^{ k ,T} \left( \mathrm{For}_{(0,t)}(\textbf{b})= \mathcal{H} , \mathcal{Z}(T) \succ k \right) $.
\end{notation}

\subsection{Multi-type Poisson processes and multidimensional CSBPs}
Whenever $\mathcal{W}$ is a subset of $\mathcal{Z}(t)$, we write $\Leb(\mathcal{W})$ for the element of $\mathbb{R}_{\geq 0}^d$ where $\Leb(\mathcal{W})_i = \Leb( \mathcal{W} \cap \mathcal{Z}_i(t) )$. In particular, $\Leb(\mathcal{Z}(0)) = x$ almost-surely under $\Q_x^{\lambda,T}$. Now consider a subset $\mathcal{W}$ of $\mathcal{Z}(0)$. The outdegree of $\mathcal{W}$, written $\mu(\mathcal{W})$, is the $\mathbb{Z}_{\geq 0 }^d$-valued and $\tF_T$-measurable random variable whose $i^{\text{th}}$ component counts the number of type-$i$ descendants of particles in $\mathcal{W}$ in the $\lambda$-sample. That is
\begin{align*}
\mu(\mathcal{W})_i := \# \{ j : \text{$b_{i,j}$ is a descendant of a particle in $\mathcal{W}$}\}.
\end{align*}
The following observation is the central idea of this section, and is an immediate consequence of the branching property at time $0$ together with the Poissonian structure of sampling at time $T$.
\begin{quote} \label{quopte}
Let $\mathcal{W}$ and $\mathcal{W}'$ be disjoint subsets of $\Zc(0)$. Then under $\Q^{\lambda,T}$, the outdegrees $\mu(\mathcal{W})$ and $\mu(\mathcal{W}')$ are independent. 
\end{quote}
In fact, this observation implies the following Corollary. (For further details, we refer the reader to Chapter 3 of \cite{kallenberg}, in particular Theorem 3.17.) 
\begin{cor} \label{cor:poisson}
For $1 \leq i \leq d$ and a non-zero multi-index $\alpha$, particles in $\Zc_i(0)$ who have outdegree $\alpha \in \mathbb{Z}_{\geq 0}^d$ occur in $\Zc_i(0)$ according to a Poisson process of some rate $r_i^\alpha(T,\lambda)$. Moreover, these Poisson processes are independent for different values of $(i,\alpha)$.
\end{cor}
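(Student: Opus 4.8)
The plan is to recognise that Corollary \ref{cor:poisson} is precisely the assertion that the marked point process recording the locations and outdegrees of the (finitely many) time-$0$ ancestors of the $\lambda$-sample is a Poisson process, and to deduce this from the complete-independence property highlighted in the displayed observation via the standard characterisation of Poisson processes by complete independence (Kallenberg \cite{kallenberg}, Theorem 3.17). First I would set up the object: identify $\Zc_i(0)$ with the interval $(0,x_i)$ through the genealogical labelling, and let $\Xi$ be the point process on $\{1,\ldots,d\} \times (0,\infty) \times (\mathbb{Z}_{\geq 0}^d \setminus \{0\})$ placing a point at $(i,z,\alpha)$ whenever the particle at location $z \in \Zc_i(0)$ is an ancestor of at least one sampled particle and has outdegree exactly $\alpha$. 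Since the $\lambda$-sample is a.s.\ finite and each sampled particle has a unique time-$0$ ancestor, $\Xi$ is a.s.\ a finite simple point process, and the process $N_i^\alpha$ in the statement is its restriction to $\{i\} \times (0,\infty) \times \{\alpha\}$, viewed as a point process on $\Zc_i(0)$.

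Second, I would upgrade the scalar independence of the displayed observation to complete independence of $\Xi$. For disjoint Borel subsets $\mathcal{W}_1,\ldots,\mathcal{W}_n$ of $\Zc(0)$, the branching property at time $0$ makes the descendant populations spawned by $\mathcal{W}_1,\ldots,\mathcal{W}_n$ independent, and the restrictions of the rate-$\lambda$ Poisson sample to these disjoint descendant populations are independent as well. Consequently the full configurations of ancestor--outdegree pairs recorded by $\Xi$ over $\mathcal{W}_1,\ldots,\mathcal{W}_n$ are independent; this is the per-particle refinement of the observation, established by the same argument. Thus $\Xi$ enjoys complete independence over disjoint spatial sets.

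Third, I would verify the regularity hypotheses and conclude. No deterministic point $z$ is the time-$0$ ancestor of a Poisson-sampled particle with positive probability, so $\Xi$ has no fixed atoms; being a.s.\ finite and simple with complete independence, it satisfies the hypotheses of the cited characterisation and is therefore Poisson with some intensity measure $M$ on the product space. Because distinct mark values $(i,\alpha)$ occupy disjoint strips of the product space, the restrictions $N_i^\alpha$ of a Poisson process to these strips are automatically \emph{independent} Poisson processes, which yields the independence claimed in the Corollary. To pin down the spatial intensity I would invoke the exchangeability of the CSBP genealogy (invariance of the law under Lebesgue-measure-preserving rearrangements of each $\Zc_i(0)$), which forces $M(\{i\} \times \mathrm{d}z \times \{\alpha\})$ to be a constant multiple $r_i^\alpha(T,\lambda)\,\mathrm{d}z$ of Lebesgue measure on $(0,x_i)$, so that each $N_i^\alpha$ is a homogeneous Poisson process of rate $r_i^\alpha(T,\lambda)$.

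The main obstacle is this third step. One must justify that the per-particle marked process, and not merely the integer-valued total outdegree of a region, inherits complete independence, then check the diffuseness/no-fixed-atom condition carefully enough for the cited Poisson characterisation to apply cleanly, and finally secure spatial homogeneity; the homogeneity in particular rests on the exchangeability of the continuous-state genealogy rather than on the displayed observation itself.
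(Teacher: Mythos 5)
Your proposal is correct and follows essentially the same route as the paper: the paper derives Corollary \ref{cor:poisson} directly from the displayed observation (complete independence of outdegrees over disjoint subsets of $\Zc(0)$, coming from the branching property plus the Poissonian sampling) together with the characterisation of completely independent point processes as Poisson processes in Kallenberg \cite{kallenberg}, Theorem 3.17. Your write-up simply fills in the details the paper leaves implicit --- the marked point process formulation, the upgrade from counts to configurations, the no-fixed-atoms check, and homogeneity of the intensity from the fact that descendant populations of subsets of equal Lebesgue measure are identically distributed.
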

We now turn our discussion to identifying the rates $\{ r_i^\alpha(T,\lambda) : 1 \leq i \leq d, \alpha \in \mathbb{Z}^d_{\geq 0} \text{ nonzero} \}$. To this end, on many occasions in the sequel we will like to interchange the order of differentiation and expectation. The following technical lemma justifies these interchanges. Recall that we write $\lambda \succ \alpha$ if $\lambda_i > 0$ for each $i$ such that $\alpha_i > 0$. We note that $h^\alpha(r) := r^\alpha e^{ - \langle \lambda, r \rangle}$ is a bounded function for $r \in \mathbb{R}_{ \geq 0}^d$ if and only if $\lambda \succ \alpha$. 

\begin{lem} \label{techlem}
Let $\lambda \in \mathbb{R}_{ \geq 0}^d$ and let $J(\mathrm{d}r)$ be a finite measure on $\mathbb{R}_{ \geq 0 }^d$. Then whenever $\lambda \succ \alpha$ we have 
\begin{align*}
D^\alpha \int_{ \mathbb{R}_{ \geq 0}^d } J(\mathrm{d}r) e^{ - \langle \lambda, r \rangle }=   (-1)^{|\alpha|} \int_{ \mathbb{R}_{ \geq 0}^d } J(\mathrm{d}r)   r^\alpha   e^{ - \langle \lambda, r \rangle } .
\end{align*}
In particular, by letting $J$ be the law of $Z(T)$, we see that we may interchange the order of differentiation and expectation in the Laplace transform of the process $(Z(t))_{t \geq 0}$, so that
\begin{align} \label{eq techlem 2}
P_w \left[ Z(T)^\alpha e^{ - \langle \lambda, Z(T) \rangle } \right] = (-1)^{|\alpha|} D^\alpha e^{ - \langle w , u(T,\lambda) \rangle } .
\end{align}
\end{lem}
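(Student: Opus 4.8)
The plan is to prove the first identity by differentiating under the integral sign, justified by dominated convergence, and then to deduce the second identity by applying the first with $J$ equal to the law of $Z(T)$ and substituting the Laplace exponent identity \eqref{branch exp 2}.

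For the first identity, I would peel off the partial derivatives in $D^\alpha = \prod_{i} \partial_{\lambda_i}^{\alpha_i}$ one at a time, so that it suffices to establish the single-variable Leibniz rule
\[
\partial_{\lambda_i} \int_{\mathbb{R}_{\geq 0}^d} J(\mathrm{d}r)\, r^\beta e^{-\langle \lambda, r\rangle} = -\int_{\mathbb{R}_{\geq 0}^d} J(\mathrm{d}r)\, r^{\beta + \mathbf{e}_i} e^{-\langle \lambda, r\rangle}
\]
at each intermediate multi-index $\beta$ arising along the way, where $\beta \leq \alpha$ componentwise and $\beta_i < \alpha_i$. Since the formal $\lambda_i$-derivative of $r^\beta e^{-\langle\lambda,r\rangle}$ is exactly $-r^{\beta+\mathbf{e}_i}e^{-\langle\lambda,r\rangle}$, iterating $\alpha_i$ times in each coordinate and collecting the $|\alpha|$ sign changes produces the factor $(-1)^{|\alpha|}$ and the integrand $r^\alpha e^{-\langle\lambda,r\rangle}$, as claimed.

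The substance of the argument, and the main obstacle, is the domination needed to license each interchange. Fix a point $\lambda^\ast \succ \alpha$. Because $\beta \leq \alpha$ forces $\beta_j = 0$ whenever $\alpha_j = 0$, and because differentiating in coordinate $i$ requires $\beta_i < \alpha_i$ and hence $\alpha_i > 0$, every power of $r$ appearing in $r^{\beta+\mathbf{e}_i}$ sits in a coordinate $j$ with $\alpha_j > 0$, where $\lambda^\ast_j > 0$. Thus on the neighbourhood $U := \{\lambda : \lambda_j > \lambda^\ast_j/2 \text{ for all } j \text{ with } \alpha_j > 0\}$ the derivative is bounded by
\[
r^{\beta+\mathbf{e}_i} e^{-\langle\lambda,r\rangle} \leq \prod_{j : \alpha_j > 0} \Big( r_j^{(\beta+\mathbf{e}_i)_j}\, e^{-(\lambda^\ast_j/2) r_j} \Big),
\]
a product of functions each bounded on $[0,\infty)$, and hence bounded uniformly in $r$ by a finite constant (this is the boundedness fact noted just before the lemma). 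The coordinates with $\alpha_j = 0$ contribute no power of $r_j$ and only the harmless factor $e^{-\lambda_j r_j} \leq 1$, so they require no lower bound on $\lambda_j$; it is precisely the hypothesis $\lambda \succ \alpha$ that reconciles the positivity needed in the differentiated coordinates with the freedom $\lambda_j = 0$ in the others. As $J$ is finite, this constant bound is $J$-integrable, so dominated convergence applies on $U$, and since $\lambda^\ast \succ \alpha$ was arbitrary the identity holds throughout $\{\lambda \succ \alpha\}$.

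Finally, for \eqref{eq techlem 2} I would take $J$ to be the law of $Z(T)$ under $P_w$, which is a probability measure and so finite, whence the first part gives $D^\alpha P_w[e^{-\langle\lambda,Z(T)\rangle}] = (-1)^{|\alpha|} P_w[Z(T)^\alpha e^{-\langle\lambda,Z(T)\rangle}]$. Substituting $P_w[e^{-\langle\lambda,Z(T)\rangle}] = e^{-\langle w, u(T,\lambda)\rangle}$ from \eqref{branch exp 2} and rearranging yields the stated formula.
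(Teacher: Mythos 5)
Your proposal is correct and follows essentially the same route as the paper: peel off one partial derivative at a time and justify each interchange by dominated convergence, using the key fact that $r^{\beta+\mathbf{e}_i}e^{-\langle\lambda,r\rangle}$ is bounded in the differentiated coordinates (where $\lambda\succ\alpha$ gives strict positivity) together with finiteness of $J$, then specialise $J$ to the law of $Z(T)$ and invoke \eqref{branch exp 2}. The only cosmetic difference is that the paper dominates the difference quotients directly via the inequality $\tfrac{1-e^{-q}}{q}<1$ and applies bounded convergence, whereas you dominate the derivative uniformly on a neighbourhood of a fixed $\lambda^\ast\succ\alpha$ and cite the standard differentiation-under-the-integral theorem; both rest on the same estimate.
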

\bp
We proceed by induction on the multi-index $\alpha$. The result is clearly true for $\alpha = (0,\ldots,0)$, so it remains to prove an inductive step. Suppose the result holds for some multi-index $\beta \prec \lambda$, then we show the result also holds for any multi-index $\beta + \mathbf{e}_i \prec \lambda$. To see this, note that
\begin{align*}
- r^{ \beta + \mathbf{e}_i } e^{ - \langle \lambda , r \rangle } = D^{\mathbf{e}_i} r^\beta e^{ - \langle \lambda, r \rangle } = \lim_{ h \to 0} f_h(r),
\end{align*}
where $f_h(r) := r^\beta e^{ - \langle \lambda , r \rangle } \left( \frac{ e^{ - h r_i } - 1 }{h} \right)$.  
Now for every $q > 0$ we have $\frac{ 1 - e^{ - q}}{q} < 1$, and in particular for $h > 0$, $|f_h(r)| \leq  g(r) := r^{ \beta + \mathbf{e}_i}  e^{ - \langle \lambda,  r \rangle }$. Since $\beta + \mathbf{e}_i \prec \lambda$, $g(r)$ is a bounded function of $r$. It follows by the bounded convergence theorem that
\begin{align*}
\int_{ \mathbb{R}_{ \geq 0}^d} J(\mathrm{d}r) \lim_{h \to 0} f_h(r) = \lim_{h \to 0}\int_{ \mathbb{R}_{ \geq 0}^d} J(\mathrm{d}r)  f_h(r) 
\end{align*}
which amounts to the statement of the lemma. 
\ep

The following lemma gives the probability that a subset of the initial population has a certain outdegree. 
\begin{lem} \label{christmas lemma}
Let $\mathcal{W}$ be a measurable subset of $\mathcal{Z}(0)$, and suppose $\Leb(\mathcal{W}) = w \in \mathbb{R}_{ \geq 0}^d$. Then for $\alpha \in \mathbb{Z}_{ \geq 0}^d$
\begin{align} \label{christmas2}
\Q^{\lambda,T}_x \left( \mu(\mathcal{W}) = \alpha | \Leb(\mathcal{W}) = w \right) = \frac{(-1)^{|\alpha|} \lambda^\alpha }{ \alpha!} D^\alpha \exp \left( - \langle w, u(T,\lambda) \rangle \right).
\end{align}
\end{lem}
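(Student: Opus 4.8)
The plan is to condition on the genealogy of the process and combine two structural facts: the branching property at time $0$, which says that the descendants of $\mathcal{W}$ evolve as an independent copy of the CSBP started from the mass vector $w = \Leb(\mathcal{W})$, and the Poissonian nature of the $\lambda$-sample at time $T$, which lets me write the conditional law of $\mu(\mathcal{W})$ as an explicit product of Poisson probabilities. The final step is then a direct appeal to the differentiation-under-the-expectation identity already recorded in Lemma \ref{techlem}.

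First I would isolate the subpopulation $\mathcal{Z}^{\mathcal{W}}(T) \subseteq \mathcal{Z}(T)$ consisting of the time-$T$ descendants of the particles in $\mathcal{W}$, and write $Z^{\mathcal{W}}_i(T) := \Leb\big( \mathcal{Z}^{\mathcal{W}}(T) \cap \mathcal{Z}_i(T) \big)$ for its type-$i$ mass. Because the $\lambda$-sample is, independently for each $i$, a rate $\lambda_i$ Poisson process on $\mathcal{Z}_i(T)$, its restriction to the genealogy-measurable set $\mathcal{Z}^{\mathcal{W}}(T)$ is conditionally Poisson: concretely $\mu(\mathcal{W})_i$ counts the points of the rate-$\lambda_i$ process lying in $\mathcal{Z}^{\mathcal{W}}(T) \cap \mathcal{Z}_i(T)$, so conditionally on the genealogy the $\mu(\mathcal{W})_i$ are independent with $\mu(\mathcal{W})_i$ Poisson of parameter $\lambda_i Z^{\mathcal{W}}_i(T)$. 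This gives
\begin{align*}
\Q_x^{\lambda,T}\left( \mu(\mathcal{W}) = \alpha \mid \mathcal{F}_T \right) = \prod_{i=1}^d \frac{ (\lambda_i Z^{\mathcal{W}}_i(T))^{\alpha_i} }{ \alpha_i! } e^{ - \lambda_i Z^{\mathcal{W}}_i(T) } = \frac{ \lambda^\alpha }{ \alpha! } \, Z^{\mathcal{W}}(T)^\alpha \, e^{ - \langle \lambda, Z^{\mathcal{W}}(T) \rangle }.
\end{align*}

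Next I would take expectations. By the branching property, conditionally on $\Leb(\mathcal{W}) = w$ the vector $Z^{\mathcal{W}}(T)$ has exactly the law of $Z(T)$ under $P_w$, so that
\begin{align*}
\Q_x^{\lambda,T}\left( \mu(\mathcal{W}) = \alpha \mid \Leb(\mathcal{W}) = w \right) = \frac{ \lambda^\alpha }{ \alpha! } \, P_w\left[ Z(T)^\alpha e^{ - \langle \lambda, Z(T) \rangle } \right].
\end{align*}
Applying \eqref{eq techlem 2} of Lemma \ref{techlem} replaces the expectation by $(-1)^{|\alpha|} D^\alpha e^{ - \langle w, u(T,\lambda) \rangle }$, which is exactly the right-hand side of \eqref{christmas2}.

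The main obstacle I anticipate is the rigorous justification of the first display, namely that the restriction of the $\lambda$-sample to the random descendant set $\mathcal{Z}^{\mathcal{W}}(T)$ is genuinely conditionally Poisson with the stated parameter, and that the resulting conditional law depends on $\mathcal{W}$ only through its mass vector $w$. This is precisely the point where the branching property at time $0$ (the descendants of $\mathcal{W}$ form an independent CSBP started from $w$) must be married with the restriction/thinning property of Poisson processes; the independence-of-disjoint-subpopulations observation stated just before Corollary \ref{cor:poisson} is the key input here. Once that display is secured, everything downstream is the short computation above together with the already-established Lemma \ref{techlem}.
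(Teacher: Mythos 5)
Your proposal is correct and takes essentially the same route as the paper's own proof: both pass to the set of time-$T$ descendants of $\mathcal{W}$, use the branching property to identify its mass vector with $Z(T)$ under $P_w$, write the conditional law of $\mu(\mathcal{W})$ as a product of Poisson probabilities, and conclude via the interchange identity \eqref{eq techlem 2} of Lemma \ref{techlem}. The only detail the paper adds, which you should too, is a one-line disposal of the degenerate case where $\lambda_i = 0$ for some $i$ with $\alpha_i > 0$ (there both sides of \eqref{christmas2} vanish trivially), since the hypothesis $\lambda \succ \alpha$ of Lemma \ref{techlem} is needed to justify the final interchange step.
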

 
\bp
Clearly $\Q^{\lambda,T}_x \left( \mu(\mathcal{W}) = \alpha | \Leb(\mathcal{W}) = w \right) $ may only be non-zero if $\alpha \succ \lambda$, for if $\lambda_i= 0$ then no particle of type $i$ is taken in the $\lambda$-sample of $\mathcal{Z}(T)$. Now let $\mathcal{W}_T \subseteq \mathcal{Z}(T)$ be the set of time $T$ descendants of particles in $\mathcal{W}$. Then plainly $\Leb(\mathcal{W}_T)$ has the same law as $Z(T)$ under $P_w$. It follows that for $\alpha \succ \lambda$, using the probabilities of the Poisson distribution we have
\begin{align}
\Q^{\lambda,T}_x \left( \mu(\mathcal{W}) = \alpha | \Leb(\mathcal{W}) = w \right) &= P_w \left[ \prod_{ i = 1}^d \frac{ \left( \lambda_i Z_i(T) \right)^{\alpha_i}  e^{ - \lambda_i Z_i(T) } }{ \alpha_i! } \right] . \label{christmas} 
\end{align}
The equation \eqref{christmas2} now follows from apply the interchange equation \eqref{eq techlem 2} to \eqref{christmas}.
\ep

The following lemma identifies the rates $r_i^\alpha(T,\lambda)$ described in Corollary \ref{cor:poisson}.

\begin{lem} \label{lem:ancestors}
Let $x \in \mathbb{R}_{\geq 0}^d$. Then
\begin{align} \label{gen rates}
\lim_{ h \to 0 } \frac{1}{h} \Q^{\lambda,T}( \mu(\mathcal{W}) = \alpha | \Leb(\mathcal{W}) = h x)  = \frac{(-1)^{|\alpha|+1}\lambda^\alpha  }{ \alpha!} D^\alpha \langle  x , u(T,\lambda) \rangle.
\end{align}
In particular, by letting $x = \mathbf{e}_i$, we have
\begin{align} \label{outdegree rates}
r_i^\alpha(T,\lambda) := \frac{(-1)^{|\alpha|+1}\lambda^\alpha  }{ \alpha!} D^\alpha u_i (T,\lambda) .
\end{align}
\end{lem}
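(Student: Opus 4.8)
The plan is to reduce the statement to the single-point formula from Lemma \ref{christmas lemma} and then perform an elementary expansion in the scaling parameter $h$. Setting $w = hx$ in \eqref{christmas2} gives, for every $h > 0$,
\begin{align*}
\Q^{\lambda,T}\left( \mu(\mathcal{W}) = \alpha \mid \Leb(\mathcal{W}) = hx \right) = \frac{ (-1)^{|\alpha|} \lambda^\alpha}{\alpha!} D^\alpha \exp\left( - h \langle x, u(T,\lambda) \rangle \right),
\end{align*}
so the entire lemma comes down to evaluating $\lim_{h \to 0} \frac{1}{h} D^\alpha \exp(-h g(\lambda))$, where I abbreviate $g(\lambda) := \langle x, u(T,\lambda) \rangle$ and $D^\alpha$ acts in the $\lambda$ variable.

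First I would expand the exponential as a power series in $h$,
\begin{align*}
\exp\left( - h g(\lambda) \right) = 1 - h\, g(\lambda) + \sum_{n \geq 2} \frac{(-h)^n}{n!} g(\lambda)^n,
\end{align*}
and apply $D^\alpha$ term by term. Because $\alpha$ is nonzero we have $D^\alpha 1 = 0$, so the constant term drops out and the unique contribution of order $h$ is $- h\, D^\alpha g(\lambda)$, coming from the linear term. Dividing by $h$ and letting $h \to 0$ then annihilates the remaining terms, which each carry at least a factor $h$, and leaves
\begin{align*}
\lim_{h \to 0} \frac{1}{h} D^\alpha \exp\left( - h g(\lambda) \right) = - D^\alpha g(\lambda).
\end{align*}
Reinstating the prefactor $\frac{(-1)^{|\alpha|}\lambda^\alpha}{\alpha!}$ turns the sign into $(-1)^{|\alpha|+1}$ and yields exactly \eqref{gen rates}. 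The special case \eqref{outdegree rates} is then immediate on taking $x = \mathbf{e}_i$, since $\langle \mathbf{e}_i, u(T,\lambda)\rangle = u_i(T,\lambda)$.

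The step needing care --- and the only real obstacle --- is the justification that the tail $\sum_{n \geq 2} \frac{(-h)^n}{n!} D^\alpha (g(\lambda)^n)$ is $O(h^2)$ as $h \to 0$ at the fixed point $\lambda$, i.e.\ that term-by-term differentiation in $\lambda$ is legitimate and the remainder is uniformly controlled near $h = 0$. This rests on the smoothness of $u(T,\cdot)$ on the region $\{\lambda \succ \alpha\}$, which is precisely what the interchange Lemma \ref{techlem} provides (and the conditional probability vanishes unless $\lambda \succ \alpha$, so we may assume this throughout). A clean way to make the order-of-$h$ bookkeeping rigorous is to apply the multivariate Fa\`a di Bruno formula of \cite{JP} directly to $D^\alpha \exp(-h g)$: this expresses it as a finite sum over set partitions of the underlying multi-index, in which a partition into $m$ blocks contributes a factor $h^m e^{-h g(\lambda)}$ times a product of derivatives of $g$. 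Only the single-block partition contributes at order $h$, giving $-h\, e^{-h g(\lambda)} D^\alpha g(\lambda)$; since $e^{-h g(\lambda)} \to 1$ this recovers the limit above while exhibiting every other term as $O(h^2)$. Finally, the notation $r_i^\alpha$ is warranted because Corollary \ref{cor:poisson} already guarantees that outdegree-$\alpha$ particles form a Poisson process, and this computation identifies its intensity.
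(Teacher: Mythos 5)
Your proposal is correct and takes essentially the same route as the paper: both reduce to Lemma \ref{christmas lemma} with $w = hx$ and then identify the limit as the coefficient of $h$ in $D^\alpha \exp\left( - h \langle x, u(T,\lambda) \rangle \right)$, using that this quantity vanishes at $h=0$ because $\alpha$ is nonzero. The only difference is in how the interchange is justified --- the paper notes that $(h,\lambda) \mapsto e^{-h\langle x, u(T,\lambda)\rangle}$ is jointly smooth and swaps $\frac{\partial}{\partial h}\big|_{h=0}$ with $D^\alpha$, while your finite Fa\`a di Bruno expansion (which cleanly repairs the term-by-term differentiation gap in your initial power-series argument) exhibits the same first-order term directly --- and both justifications are valid.
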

\bp
Consider setting $w = hx$ in Lemma \ref{christmas lemma}, and consider the quantity $f(h) := \frac{(-1)^{|\alpha|} \lambda^\alpha }{ \alpha!} D^\alpha \exp \left( - \langle h x , u(T,\lambda) \rangle \right)$. We note that since $\alpha$ is non-zero that $f(0) = 0$, and in particular 
\begin{align*}
\lim_{ h \to 0 } \frac{1}{h} \frac{(-1)^{|\alpha|}\lambda^\alpha  }{ \alpha!} D^\alpha \exp \left( - \langle h x , u(T,\lambda) \rangle \right)  = \lim_{h \to 0} \frac{1}{h} \left( f(h) - f(0) \right) = \frac{ \partial}{ \partial h}\Big|_{h = 0} f(h) .
\end{align*}
The function $g:\mathbb{R}_{ \geq 0} \times \mathbb{R}^d \to \mathbb{R}$ given by $g(h,\lambda) := \exp \left( - \langle h x , u(T,\lambda) \rangle \right)$ is smooth on $\mathbb{R}^{d+1}$, it follows that we may interchange the order of $ \frac{ \partial}{ \partial h}\Big|_{h = 0}$ and $D^\alpha$, thereby obtaining \eqref{gen rates}. 
\ep

We point out here that Foucart, Ma and Mallein \cite{FMM} study the interaction of Poisson processes with one-dimensional CSBPs, with the one-dimensional case of the formula \eqref{outdegree rates} appearing in Lemma 5.8 of that paper.

For fixed $i$, the sum of the rates $\sum_{ \alpha \neq 0} r_i^\alpha(T, \lambda)$ should characterise the rate of those particles $\Zc_i(0)$ who have \emph{any} positive number of sampled descendants of any type under $\Q_x^{\lambda,T}$. The following lemma characterises this total rate. In order to clarify the possible perspectives on this total rate, we provide two proofs, the first probabilistic and the latter analytic.
\begin{lem} \label{taylor}
Under $\Q_x^{\lambda,T}$, particles in $\Zc_i(0)$ who have at least one sampled descendant of any type at time $T$ occur according to a Poisson process of rate $u_i(T,\lambda)$. In particular, we have the equality 
\begin{align} \label{r eq}
u_i(T,\lambda)  = \sum_{ \alpha \neq 0 } r_i^\alpha(T,\lambda),
\end{align}
where the sum is taken over all non-zero $\alpha$ in $\mathbb{Z}_{\geq 0}^d$.
\end{lem}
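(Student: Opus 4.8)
The plan is to give two independent arguments, matching the promised probabilistic and analytic proofs, both of which identify the total rate $\sum_{\alpha \neq 0} r_i^\alpha(T,\la)$ with $u_i(T,\la)$.

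For the probabilistic route, I would first observe that a particle of $\Zc_i(0)$ has at least one sampled descendant precisely when its outdegree is a \emph{nonzero} multi-index, so that the collection of such particles is the superposition, over all nonzero $\alpha \in \Z_{\geq 0}^d$, of the point processes of particles of outdegree exactly $\alpha$. By Corollary \ref{cor:poisson} these are independent Poisson processes of rates $r_i^\alpha(T,\la)$, and a superposition of independent Poisson processes is Poisson with rate the sum of the rates; this already yields the first assertion of the lemma and shows the total rate is $\sum_{\alpha \neq 0} r_i^\alpha(T,\la)$. It then remains to identify this total rate by a direct computation in the style of Lemma \ref{lem:ancestors}. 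Taking $\alpha = 0$ in Lemma \ref{christmas lemma} (or directly using \eqref{branch exp 2}) gives the void probability $\Q_x^{\la,T}(\mu(\mathcal{W}) = 0 \mid \Leb(\mathcal{W}) = w) = e^{-\langle w, u(T,\la)\rangle}$, since the time-$T$ descendants of $\mathcal{W}$ are distributed as $Z(T)$ under $P_w$ and the Poisson sample on them is empty with probability $P_w[e^{-\langle \la, Z(T)\rangle}]$. Setting $w = h\mathbf{e}_i$ and differentiating at $h = 0$,
\[
\lim_{h \to 0} \frac{1}{h}\,\Q_x^{\la,T}\big(\mu(\mathcal{W}) \neq 0 \mid \Leb(\mathcal{W}) = h\mathbf{e}_i\big) = \lim_{h \to 0}\frac{1 - e^{-h\,u_i(T,\la)}}{h} = u_i(T,\la),
\]
so that the two descriptions of the same Poisson process force $\sum_{\alpha \neq 0} r_i^\alpha(T,\la) = u_i(T,\la)$, which is \eqref{r eq} (and incidentally shows the sum is finite).

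For the analytic route, I would expand $u_i(T,\cdot)$ in a multivariate Taylor series about the point $\la$ and evaluate at the origin. Because $\psi_c(0) = 0$ for every $c$ by the L\'evy--Khintchine representation \eqref{eq:lkrep}, the constant path $0$ solves \eqref{csbp pde multi} with the correct initial condition, whence $u(t,0) \equiv 0$ and in particular $u_i(T,0) = 0$. Writing the Taylor identity $u_i(T,0) = \sum_{\alpha} \frac{(-\la)^\alpha}{\alpha!} D^\alpha u_i(T,\la)$ and isolating the $\alpha = 0$ term $u_i(T,\la)$, the remaining terms are exactly $-\sum_{\alpha \neq 0} r_i^\alpha(T,\la)$ by the definition \eqref{outdegree rates}, giving \eqref{r eq} once more.

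The main obstacle is justifying convergence of this Taylor series to the value at the origin, since $u_i(T,\cdot)$ need not be entire. I would handle this through the sign structure already recorded in \eqref{outdegree rates}: nonnegativity of the rates $r_i^\alpha$ means $(-1)^{|\alpha|+1} D^\alpha u_i(T,\la) \geq 0$, so the single-variable function $H(s) := u_i(T,\la) - u_i(T,(1-s)\la)$ on $s \in [0,1)$ has all derivatives nonnegative, i.e.\ is absolutely monotone, with $H(1) = u_i(T,\la)$ and $n$-th Taylor coefficient $\sum_{|\alpha| = n} r_i^\alpha(T,\la)$. An absolutely monotone function is real-analytic with radius of convergence at least the length of its interval of definition, so the partial sums increase to $H(s)$ for each $s < 1$; letting $s \uparrow 1$ and invoking continuity of $u$ at the boundary (Abel's theorem, together with monotone convergence of the nonnegative terms) yields $\sum_{\alpha \neq 0} r_i^\alpha(T,\la) = H(1) = u_i(T,\la)$. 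The delicate point is exactly this passage to the endpoint $s = 1$, where the argument $(1-s)\la$ degenerates to $0$ and the derivatives of $u_i$ may blow up, so that continuity of $u_i(T,\cdot)$ up to the origin is what legitimizes the limit.
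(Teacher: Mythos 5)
Your proposal is correct and follows essentially the same two-pronged approach as the paper: your probabilistic argument (superposition of the independent Poisson processes from Corollary \ref{cor:poisson}, with the total rate identified via the void probability $e^{-\langle w, u(T,\lambda)\rangle}$ at $w = h\mathbf{e}_i$) is the paper's first proof, and your Taylor expansion of $u_i(T,\cdot)$ about $\lambda$ evaluated at the origin, using $u(T,0)=0$ and the definition \eqref{outdegree rates}, is the paper's second proof. The only difference is that the paper simply asserts analyticity of $u_i(T,\cdot)$ and Taylor-expands globally without justification, whereas your absolute-monotonicity/Bernstein argument, with the monotone passage $s \uparrow 1$ hinging on continuity of $u_i(T,\cdot)$ at the origin (which holds here because the process is $\mathbb{R}^d_{\geq 0}$-valued, hence conservative), supplies the convergence justification the paper omits --- a genuine tightening, since Laplace exponents need not be entire.
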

\bp[Proof 1: Poisson process calculation]
Let $\Leb(\mathcal{W}) = w$. Then the probability under $\Q^{\lambda,T}_x$ that no descendant of a particle in $\mathcal{W}$ is sampled by the rate-$\lambda$ Poisson process on $\mathcal{Z}(T)$ is given by
\begin{align*}
\Q_x^{\lambda,T}\left( \mu (\mathcal{W} ) = 0 | \Leb(\mathcal{W}) = w \right) = P_w \left[ \prod_{i=1}^d \exp( - \lambda_i Z_i(T) ) \right] = \exp \left( - \langle w, u(T,\lambda) \rangle \right),
\end{align*}
where we used \eqref{branch exp 2} to obtain the final equality above. 
Equation \eqref{r eq} follows immediately by letting $w = x \mathbf{e}_i$ for some $x > 0$. 
\ep

\bp[Proof 2: Taylor's expansion]
The $i^{\text{th}}$ component of the Laplace exponent $u (T, \cdot )$ is an analytic function mapping $\mathbb{R}^d $ to $\mathbb{R}$. Taylor expanding $u(T, 0)$ about the point $\lambda$ in $\mathbb{R}^d$, for $y \in \mathbb{R}^n$ we have
\begin{align*}
u(T,\lambda + y  )_i = u(T,\lambda)_i + \sum_{ \alpha \neq 0 \in \mathbb{Z}^d_{\geq 0}  } \frac{ y^\alpha}{ \alpha!} D^\alpha u(T,\lambda)_i. 
\end{align*}
Now set $y = - \lambda$, use the fact that $u(T,0) = 0$, and compare with \eqref{outdegree rates}. 
\ep

The following Proposition is a summary of our work in this section. It is an immediate consequence of replacing $0$ with $s$, $T$ with $t$ and $\lambda$ with $\xi$, as well as using the branching property and the Markov property to combine Corollary \ref{cor:poisson} and Lemma \ref{taylor}.

\begin{prop} \label{prop:summary}
Let $s < t$ be times, and let $\xi$ be an element of $\mathbb{R}_{ \geq 0}^d$. Take a $d$-dimensional continuous state branching process with Laplace exponent $u(t,\xi)$ and run a $\xi$-sample $\textbf{b} = (b_{i,j} : 1 \leq i \leq d, 1 \leq j \leq \mathcal{S}_i )$ on the time $t$ population. We say a particle $a$ in the time $s$ population has outdegree $\alpha$ if for each $i$, $a$ is the ancestor of precisely $\alpha_i$ of the particles $\{ b_{i,1},\ldots,b_{i,\mathcal{S}_i } \}$. Then conditional on $\mathcal{F}_s$, 
\begin{itemize}
\item Particles of type $i$ with outdegree $\alpha$ occur within the time $s$ population according to a Poisson process of rate
\begin{align*}
r_i^\alpha(t-s,\xi) := \frac{ (-1)^{|\alpha| + 1 } \xi^\alpha }{ \alpha! } D^\alpha u_i(t-s,\xi).
\end{align*}
\item Particles of type $i$ with nonzero outdegree equal to any $\alpha \in \mathbb{Z}_{ \geq 0}^d - \{0\}$ occur within the time $s$ population according to a Poisson process of rate $u_i(t-s,\xi)$. 
\end{itemize}
\end{prop}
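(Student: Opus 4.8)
The plan is to obtain Proposition \ref{prop:summary} from the results already established over the interval $[0,T]$ by using the Markov property to shift the time origin from $0$ to $s$, so that the duration $T$ is replaced by $t-s$ and the sampling rate $\lambda$ by $\xi$. Concretely, I would first condition on $\mathcal{F}_s$ and invoke the Markov property of the CSBP: given $\mathcal{F}_s$, the shifted process $(Z(s+u))_{0 \leq u \leq t-s}$ is again a $d$-dimensional CSBP with the same branching mechanism, started now from the population $\mathcal{Z}(s)$ and run for a duration $t-s$. Crucially, the genealogical structure carries over, so the outdegree of a particle $a \in \mathcal{Z}(s)$ (counting its descendants among the $\xi$-sampled points of $\mathcal{Z}(t)$) coincides exactly with the outdegree notion $\mu(\mathcal{W})$ of Section \ref{sec:distributional proof} applied to this shifted process.

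Once this identification is made, both bullet points follow by direct application of the earlier lemmas to the shifted process. For the first bullet I would apply Corollary \ref{cor:poisson} with $T$ replaced by $t-s$ and $\lambda$ by $\xi$, giving that particles of type $i$ in $\mathcal{Z}(s)$ with outdegree $\alpha$ occur according to a Poisson process, and then Lemma \ref{lem:ancestors}, specifically the rate formula \eqref{outdegree rates}, identifies the rate of this Poisson process as $r_i^\alpha(t-s,\xi)$. For the second bullet I would apply Lemma \ref{taylor} to the shifted process, which directly yields that particles of type $i$ with any nonzero outdegree occur according to a Poisson process of rate $u_i(t-s,\xi)$; the compatibility of the two statements is precisely the identity \eqref{r eq}.

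The main obstacle I anticipate is bookkeeping rather than a genuinely hard analytic step: one must verify that conditioning on $\mathcal{F}_s$ interacts cleanly with the $\xi$-sampling carried out at time $t$, i.e. that this sampling may legitimately be regarded as a fresh $\xi$-sample on the time-$(t-s)$ population of the shifted CSBP. This is where the branching property does the real work, exactly as in the argument preceding Corollary \ref{cor:poisson}: disjoint subsets of $\mathcal{Z}(s)$ have conditionally independent descendant populations at time $t$, and the independence of the Poisson sampling across these descendant populations then delivers the conditional independence of outdegrees over disjoint regions of $\mathcal{Z}(s)$, which is the hypothesis underlying the Poissonian conclusion of Corollary \ref{cor:poisson}. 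Provided this is spelled out, the remainder is a routine substitution of parameters.
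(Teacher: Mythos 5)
Your proposal is correct and follows essentially the same route as the paper, which proves Proposition \ref{prop:summary} precisely by the parameter substitution ($0 \to s$, duration $\to t-s$, $\lambda \to \xi$) justified by the Markov and branching properties, combining Corollary \ref{cor:poisson}, Lemma \ref{lem:ancestors} and Lemma \ref{taylor}. Your added care in checking that the $\xi$-sampling at time $t$ behaves as a fresh sample on the shifted process, via conditional independence of outdegrees over disjoint subsets of $\mathcal{Z}(s)$, is exactly the role the paper assigns to the branching property.
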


Recall that $\left( \mathcal{F}_t \right)_{ 0 \leq t \leq T}$ is the filtration that measures the CSBP and its genealogy, and that $\tilde{\mathcal{F}}_T$ is an extension of the filtration $\mathcal{F}_T$ measuring the random variable $b = (b_{i,j} : 1 \leq i \leq d, 1 \leq j \leq \mathcal{S}_i )$. 

For an element $u$ of $\mathcal{Z}(t)$, let $I(u)$ denote the $\tilde{\mathcal{F}}_T$-measurable  random variable
\begin{align*}
I(u) = 
\begin{cases}
1 \qquad &\text{if $u$ is an ancestor to some $b_{i,j}$},\\
0 \qquad &\text{otherwise.}
\end{cases}
\end{align*}
We say an element $u$ of $\mathcal{Z}(t)$ is \emph{marked} if $I(u) = 1$. We now define the extension $\mathcal{F}^*_t$ of $\mathcal{F}_t$ by setting
\begin{align*}
\mathcal{F}^*_t = \sigma \left( \mathcal{F}_t , \{ I(u) : u \in \mathcal{Z}(t) \} \right) .
\end{align*} 
That is $\mathcal{F}^*_t$ is the $\sigma$-algebra measuring the CSBP and its genealogy at time $t$, that also knows which time $t$ particles are ancestors to members of the sample $\textbf{b}$. 
Now given a particle $u$ in $\mathcal{Z}(s)$, define the random variable $\mu_{s,t}(u)$ to be the multi-index whose $j^{\text{th}}$ component counts the number of type $j$ marked descendants of $u$ in the time $t$ population. In other words, the $j^{\text{th}}$ component is given by 
\begin{align*}
\mu_{s,t}( u)_j :=  \# \{ v \in \mathcal{Z}_j(t) : I(v) = 1, u \leq v \}.
\end{align*}

\begin{lem} \label{lem:GW}
Under $\mathbb{Q}_x^{\lambda,T}$, define the stochastic process $\left( N(t) \right)_{0 \leq t \leq T}$ taking values in $\mathbb{Z}_{ \geq 0}^d$ by letting 
\begin{align*}
N_i(t) = \# \{ u \in \mathcal{Z}_i (s) : I(u)  = 1 \}.
\end{align*}
Then under the filtration $\left( \mathcal{F}^*_t \right)_{ 0 \leq t \leq T}$, the process $\left( N(t) \right)_{0 \leq t \leq T}$ is a time inhomogenous multidimensional continuous-time Galton-Watson process with the following probabilities. The process starts with a random number of individuals with law 
\begin{align} \label{eq:starting}
\mathbb{P} \left( N(0) = n \right) = \prod_{ i =1}^d \frac{ ( x_i u_i(T,\lambda))^{n_i} e^{ - x_i u_i(T,\lambda)} }{ n_i!}.
\end{align}
Now suppose we have a marked individual $u$ of type $i$ at time $s$. Then the probability that for each $1 \leq j \leq d$ this individual has $\alpha_j$ descendants of type $j$ at a later time $t$ is given by
\begin{align} \label{eq:transition}
\mathbb{Q}_x^{ \lambda,T} \left( \mu_{s,t}(u) = \alpha | \mathcal{F}^*_s, u \in \mathcal{Z}_i(s), I(u) = 1 \right) = \frac{ r_i^\alpha  \left( t -s , u(T-t, \lambda)   \right) }{ u_i(T-t,\lambda)}.
\end{align}
\end{lem}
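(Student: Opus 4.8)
The plan is to read off both assertions from the Poissonian description in Proposition \ref{prop:summary}, using the branching and Markov properties to organise the marked genealogy into a time-inhomogeneous multitype Galton--Watson process. First I would dispatch the initial law \eqref{eq:starting}. Applying Lemma \ref{taylor} (equivalently the second bullet of Proposition \ref{prop:summary}) on the whole window $[0,T]$, the marked type-$i$ particles in $\Zc_i(0)$---those possessing at least one sampled descendant at time $T$---occur as a Poisson process of rate $u_i(T,\lambda)$. Since $\Leb(\Zc_i(0))=x_i$ almost surely under $\Q_x^{\lambda,T}$, the count $N_i(0)$ is Poisson with mean $x_i u_i(T,\lambda)$, and the independence across types recorded in Corollary \ref{cor:poisson} delivers the product formula \eqref{eq:starting}.

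The crux is the transition law, and the key reduction is to recognise the marked particles at the intermediate time $t$ as an effective sample. By the Markov property, whether a time-$t$ particle is marked is decided by the independent evolution on $[t,T]$; Proposition \ref{prop:summary} then shows that the marked type-$j$ particles occur in $\Zc_j(t)$ as a Poisson process of rate $u_j(T-t,\lambda)$, and Corollary \ref{cor:poisson} gives independence across $j$. Conditionally on $\F_t$ this is precisely the law of a $\xi$-sample on $\Zc(t)$ with $\xi=u(T-t,\lambda)$. I would therefore re-apply Proposition \ref{prop:summary}, now on the window $[s,t]$ and with the role of the sample played by the marked time-$t$ particles, i.e.\ with effective rate $\xi=u(T-t,\lambda)$.

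This produces two intensities in the time-$s$ population: a type-$i$ particle has marked-outdegree $\alpha$ with Poisson rate $r_i^\alpha\!\left(t-s,u(T-t,\lambda)\right)$, while the marked type-$i$ particles (those of nonzero such outdegree, which are exactly the $I(u)=1$ particles) have total Poisson rate $u_i\!\left(t-s,u(T-t,\lambda)\right)$; by the semigroup identity \eqref{semigroup} the latter equals $u_i(T-s,\lambda)$, consistent with applying Lemma \ref{taylor} directly on $[s,T]$. The transition probability \eqref{eq:transition} is then the colouring identity for Poisson processes: among the marked type-$i$ points at time $s$, those of outdegree $\alpha$ form an independent Poissonian thinning, so a typical marked parent carries outdegree $\alpha$ with probability equal to the ratio $r_i^\alpha(t-s,u(T-t,\lambda))/u_i(t-s,u(T-t,\lambda))$ of these rates. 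Every marked parent at time $s$ has at least one marked child at time $t$, so the offspring index ranges over $\alpha\neq 0$, and by \eqref{r eq} these probabilities sum to one.

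The step I expect to be the main obstacle is not either marginal computation but the promotion of these statements to the genuinely branching structure of $(N(t))_{0\le t\le T}$: one must verify that, conditionally on $\F^*_s$, the marked sub-populations descending from distinct marked time-$s$ particles are independent and that each parent's offspring law depends only on its type (so that conditioning on the markings of the other parents leaves $u$'s law unchanged). Here I would invoke the central observation preceding Corollary \ref{cor:poisson}---disjoint subsets of the population generate independent outdegrees---transported from time $0$ to time $s$ by the Markov property. This lets me decompose $N(t)$ as a sum over the $N(s)$ marked parents of independent contributions distributed according to \eqref{eq:transition}, which together with the initial law \eqref{eq:starting} is exactly the assertion that $(N(t))_{0 \le t \le T}$ is a time-inhomogeneous multitype continuous-time Galton--Watson process.
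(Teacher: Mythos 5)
Your proof is correct and follows essentially the same route as the paper's: the initial law via Lemma \ref{taylor} applied at time $0$, and the transition law by viewing the marked time-$t$ particles as an effective $\xi$-sample with $\xi = u(T-t,\lambda)$, applying Proposition \ref{prop:summary} on the window $[s,t]$, and taking a ratio of Poisson intensities, with the branching structure coming from the independence of outdegrees of disjoint subpopulations. One point worth flagging: the denominator you derive, $u_i\!\left(t-s,u(T-t,\lambda)\right) = u_i(T-s,\lambda)$, differs from the denominator $u_i(T-t,\lambda)$ printed in \eqref{eq:transition}, but yours is the correct one --- the printed formula is a typo, as confirmed both by your normalisation check via \eqref{r eq} and by the fact that the paper itself uses the denominator $u_{\tau(v)}(T-t_\ell,\lambda)$ (i.e.\ $u_i(T-s,\lambda)$ with $s = t_\ell$) when it invokes this lemma in \eqref{eq:sokolov}.
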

\begin{proof}
Consider the set $\{ u \in \mathcal{Z}(s) : I(u) = 1 \}$ of marked particles at time $s$. Using the branching property in conjunction with the independent increments of Poisson processes, for any $t > s$ the random variables
\begin{align*}
 \left\{ \mu_{s,t}( u) : u \in \mathcal{Z}(s) : I(u) = 1 \right\}
\end{align*}
are independent. In other words, the process $\left( N(t) \right)_{ 0 \leq t \leq T}$ is a continuous-time Galton-Watson process. 

The equation \eqref{eq:starting} for the starting probabilities is an immediate consequence of Lemma \ref{taylor} --- indeed, marked particles occur in the type $i$ time $0$ population $\mathcal{Z}_i(0)$ according to a Poisson process of rate $u_i(T,\lambda)$. 

We now turn to proving \eqref{eq:transition}. On the one hand, we observe that by Lemma \ref{taylor}, marked particles occur within the time $t$ type $i$ population according to a Poisson process of rate $u_i(T-t,\lambda)$. In particular, setting $\xi = u(T-t,\lambda)$ in Proposition \ref{prop:summary}, we see that particles of type $i$ satisfying $\mu_{s,t}( u) = \alpha$ occur within the time $s$ population according to a Poisson process of rate $r_i^\alpha(t-s,u(T-t,\lambda))$. 

On the other hand, particles of type $i$ satisfying $I(u) =1$ occur within the time $s$ population according to a Poisson process of rate $u_i(T,\lambda)$.

In particular, by dividing $r_i^\alpha(t-s,u(T-t,\lambda))$ through by $u_i(T-t,\lambda)$, we obtain the $\mathcal{F}_s^*$-conditional probability that a marked particle of type $i$ at time $s$ has $\mu_{s,t}(u) = \alpha$, which is precisely \eqref{eq:transition}.
\end{proof}

We are now equipped to understand the law of the ancestral forests under $\mathbb{Q}_x^{\lambda,T}$ up to a combinatorial factor that depends on the forest $\mathcal{H}$ but is independent of $x$ and $\lambda$. Recall from Section \ref{sec:ancestral} that the energy of the forest $\mathcal{H}$ in $\mathbb{H}_k(m)$ over a mesh $\mathsf{t}$ is given by 
\begin{align} \label{eq:energy2}
\mathsf{E}_{\mathsf{t},x} \left( \mathcal{H} , \lambda \right) := (-1)^{k - \rho} x^\rho \prod_{ \ell = 0}^{m-1} \prod_{ v \in V^{(\ell)} } D_{\tau(v)}^{\mu(v)}[ u] \left( \Delta t_\ell , u( T-t_{\ell+1} , \lambda) \right).
\end{align}

\begin{cor} \label{cor:tree}
Let $\mathsf{t}  = (t_0, t_1,\ldots,t_m)$ be a mesh of $[0,T]$. Then there is a combinatorial factor $C_\mathcal{H}$ depending only on the forest $\mathcal{H}$ such that 
\begin{align*}
\mathbb{Q}_x^{\lambda,T} \left( \mathrm{For}_{\mathsf{t}}(\textbf{b}) = \mathcal{H} \right) =C_\mathcal{H}\lambda^k e^{ - \langle x, u(T,\lambda) \rangle}\mathsf{E}_{\mathsf{t},x} \left( \mathcal{H} , \lambda \right) .
\end{align*} 
\end{cor}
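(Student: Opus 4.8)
The plan is to identify the ancestral forest $\mathrm{For}_{\mathsf{t}}(\textbf{b})$ under $\mathbb{Q}_x^{\lambda,T}$ with the genealogy, observed along the mesh $\mathsf{t}$, of the multitype time-inhomogeneous Galton--Watson process $(N(t))$ of \emph{marked} particles from Lemma \ref{lem:GW}. Under this identification the generation-$\ell$ vertices of the forest are exactly the marked particles present at time $t_\ell$, an internal vertex $v$ of type $\tau(v)$ has offspring vector $\mu(v)$, and the leaves (generation $m$) are the sampled points. Consequently $\mathbb{Q}_x^{\lambda,T}(\mathrm{For}_{\mathsf{t}}(\textbf{b}) = \mathcal{H})$ factorizes, up to a purely combinatorial multiplicity, as the product of (i) the probability from \eqref{eq:starting} that there are exactly $\rho_i$ marked founders of each type $i$, i.e. $N(0) = \rho(\mathcal{H})$, and (ii) the transition probability \eqref{eq:transition} for each internal vertex. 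The combinatorial multiplicity --- counting the ways the distinguishable population particles and the uniform labels of the $\lambda$-sample realize the abstract leaf-labelled forest $\mathcal{H}$, together with the factorials $1/\rho!$ and $\prod_v 1/\mu(v)!$ --- depends only on $\mathcal{H}$ and is absorbed into $C_\mathcal{H}$; since the statement only claims existence of such a factor, it need not be evaluated.

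First I would write out this product explicitly. The starting factor from \eqref{eq:starting} contributes $\tfrac{x^\rho}{\rho!}\, u(T,\lambda)^\rho\, e^{-\langle x,u(T,\lambda)\rangle}$, which already supplies the full exponential $e^{-\langle x,u(T,\lambda)\rangle}$ and the prefactor $x^\rho$ appearing in the energy \eqref{eq:energy2}. Each internal vertex $v \in V^{(\ell)}$ contributes the transition factor from \eqref{eq:transition}, whose numerator is the rate $r_{\tau(v)}^{\mu(v)}(\Delta t_\ell, u(T-t_{\ell+1},\lambda))$ and whose normalizing denominator is the rate of marked type-$\tau(v)$ particles present at time $t_\ell$, namely $u_{\tau(v)}(T-t_\ell,\lambda)$ by Lemma \ref{taylor} (the semigroup identity \eqref{semigroup} is what guarantees \eqref{eq:transition} is a genuine probability with this normalization). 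I would then substitute the defining relation $r_i^\alpha(t,\xi) = \tfrac{(-1)^{|\alpha|+1}\xi^\alpha}{\alpha!} D_i^\alpha[u](t,\xi)$ from \eqref{outdegree rates}, which turns each numerator into the derivative $D_{\tau(v)}^{\mu(v)}[u](\Delta t_\ell, u(T-t_{\ell+1},\lambda))$ times a sign $(-1)^{|\mu(v)|+1}$, a factorial $1/\mu(v)!$, and a power $(u(T-t_{\ell+1},\lambda))^{\mu(v)}$.

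The heart of the argument is the telescoping of the powers of $u$. Using the child-counting identity $(u(T-t_{\ell+1},\lambda))^{\mu(v)} = \prod_{w\text{ child of }v} u_{\tau(w)}(T-t_{\ell+1},\lambda)$, the numerator powers collected over all generations together with the founder powers $u(T,\lambda)^\rho$ run over \emph{all} vertices as $\prod_{\ell=0}^{m}\prod_{v\in V^{(\ell)}} u_{\tau(v)}(T-t_\ell,\lambda)$, whereas the transition denominators run over the \emph{internal} vertices as $\prod_{\ell=0}^{m-1}\prod_{v\in V^{(\ell)}} u_{\tau(v)}(T-t_\ell,\lambda)$. The quotient is therefore the product over the leaves $V^{(m)}$ of $u_{\tau(v)}(T-t_m,\lambda) = u_{\tau(v)}(0,\lambda) = \lambda_{\tau(v)}$; since $\phi$ puts the leaves in type-respecting bijection with $[k]$, this equals $\lambda^k$. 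A parallel bookkeeping of the signs shows $\prod_{v\text{ internal}} (-1)^{|\mu(v)|+1} = (-1)^{k-\rho}$, since $\sum_{v\text{ internal}}(|\mu(v)|+1) \equiv |k|-|\rho| \pmod 2$. Reassembling, the surviving $x$- and $\lambda$-dependent factors are exactly $\lambda^k e^{-\langle x,u(T,\lambda)\rangle}\mathsf{E}_{\mathsf{t},x}(\mathcal{H},\lambda)$, and everything else is the constant $C_\mathcal{H}$.

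The main obstacle I anticipate is precisely this telescoping bookkeeping: one must carefully track how the power $(u(T-t_{\ell+1},\lambda))^{\mu(v)}$ from each parent redistributes onto its children one generation later, so that the numerator product reindexes cleanly over vertices while the denominators cancel all internal contributions and leave only the leaf factors $u(0,\lambda)=\lambda$. A secondary point requiring care --- though not an obstacle, since we need not compute it --- is justifying that the passage from the microscopic genealogy of distinguishable marked particles and uniform sample labels to the abstract forest $\mathcal{H}$ contributes a multiplicity that is genuinely independent of $x$ and $\lambda$, so that it may be pulled out as $C_\mathcal{H}$.
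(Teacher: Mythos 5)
Your proposal is correct and takes essentially the same route as the paper's own proof: both identify the forest with the genealogy of the marked Galton--Watson process of Lemma \ref{lem:GW}, factorize the probability (up to a purely combinatorial multiplicity) into the founder probability \eqref{eq:starting} times the per-internal-vertex transition probabilities, substitute the rate formula \eqref{outdegree rates}, and telescope the powers of $u$ across generations to leave exactly $\lambda^k e^{-\langle x, u(T,\lambda)\rangle}\mathsf{E}_{\mathsf{t},x}(\mathcal{H},\lambda)$. The only minor differences are cosmetic: you make the sign bookkeeping $\prod_{v}(-1)^{|\mu(v)|+1}=(-1)^{k-\rho}$ explicit where the paper absorbs it into the energy, and your normalizing denominator $u_{\tau(v)}(T-t_\ell,\lambda)$ is the one the paper actually uses in its computation (the displayed statement of \eqref{eq:transition} contains a small typo in that denominator).
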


\begin{proof}
Using the fact that the marked particles form a continuous-time Galton-Watson tree to obtain the first equality below, we see that there is a combinatorial factor $C'_\mathcal{H}$ such that
\begin{align} \label{eq:gaspard}
&\mathbb{Q}_x^{\lambda,T} \left( \mathrm{For}_{\mathsf{t}}(\textbf{b}) = \mathcal{H} \right) \nonumber \\
&=C'_\mathcal{H}\mathbb{P}\left( N(0) = \rho \right) \prod_{ \ell = 0}^{m-1} \prod_{ v \in V^{(\ell)} }\mathbb{Q}_x^{ \lambda,T} \left( \mu_{t_\ell,t_{\ell+1}}(u) = \mu(v) \big|  \mathcal{F}^*_{t_\ell} , I(u) = 1, \text{$u$ is of type $\tau(v)$} \right).
\end{align}
Now on the one hand, by \eqref{eq:starting}, we have
\begin{align} \label{eq:pogorelich}
\mathbb{P} \left( N(0) = \rho \right) = \frac{ \left( x u(T,\lambda) \right)^\rho e^{ - \langle x, u(T,\lambda) \rangle } }{ \rho!} .
\end{align}
On the other hand, using the formula \eqref{eq:transition} for the transition rates, we have 
\begin{align} \label{eq:sokolov}
\mathbb{Q}_x^{ \lambda,T} \left( u_{t_\ell,t_{\ell+1}}(u) = \mu(v) \big|  \mathcal{F}^*_{t_\ell} , I(u) = 1, \text{$u$ is of type $\tau(v)$} \right)
&=\frac{ r_{\tau(v)}^{ \mu(v)}  \left( \Delta t_\ell  , u(T-t_{\ell+1}, \lambda)   \right) }{ u_{\tau(v)} (T-t_\ell,\lambda)}.
\end{align}
Using equations \eqref{eq:pogorelich} and \eqref{eq:sokolov} in \eqref{eq:gaspard}, we obtain
\begin{align*}
\mathbb{Q}_x^{\lambda,T} \left( \mathrm{For}_{\mathsf{t}}(\textbf{b}) = \mathcal{H} \right) =C'_\mathcal{H}\frac{ x^\rho  u(T,\lambda)^\rho e^{ - \langle x, u(T,\lambda) \rangle } }{ \rho!}  \prod_{ \ell = 0}^{m-1} \prod_{ v \in V^{(\ell)} }\frac{ r_i^\alpha   \left( \Delta t_\ell  , u(T-t_{\ell+1}, \lambda)   \right) }{ u_{\tau(v)} (T-t_\ell,\lambda)}. 
\end{align*}
Now using the definition \eqref{outdegree rates} of $r_i^\alpha(t,\lambda)$ as well as the definition of forest energy \eqref{eq:energy2}, for a different combinatorial factor $C_\mathcal{H}$ we have 
\begin{align*}
\mathbb{Q}_x^{\lambda,T} \left( \mathrm{For}_{\mathsf{t}}(\textbf{b}) = \mathcal{H} \right) =C_\mathcal{H} e^{ - \langle x, u(T,\lambda) \rangle } \mathsf{E}_{\mathsf{t},x} \left( \mathcal{H} , \lambda \right)  \left\{ u(T,\lambda)^\rho   \prod_{ \ell = 0}^{m-1} \prod_{v \in V^{(\ell) } } \frac{ u (T-t_{\ell+1} , \lambda)^{ \mu(v) }   }{ u_{\tau(v)} (T - t_\ell, \lambda)   }   \right\}.
\end{align*}
We now note that we have a cascade of cancellations in the final term above. Indeed since $  \prod_{ \ell = 0}^{m-1} \prod_{v \in V^{(\ell) } }   u (T-t_{\ell+1} , \lambda)^{ \mu(v) }   = \prod_{ \ell = 1}^m \prod_{v \in V^{(\ell)} }  u_{\tau(v)}(T-t_\ell , \lambda)$ we have 
\begin{align*}
 u(T,\lambda)^\rho   \prod_{ \ell = 0}^{m-1} \prod_{v \in V^{(\ell) } } \frac{ u (T-t_{\ell+1} , \lambda)^{ \mu(v) }   }{ u_{\tau(v)} (T - t_\ell, \lambda)   }   = u(0,\lambda)^k = \lambda^k,
\end{align*}
proving the result.
\end{proof}

We are now equipped to prove Theorem \ref{thm:CSBP1}, which is a consequence of Corollary \eqref{cor:tree} once we have established that
\begin{align*}
C_\mathcal{H} = 1/k! \text{ for every $\mathcal{H}$ in $\mathbb{H}^k(m)$} .
\end{align*}
It is possible to obtain the combinatorial factor $C_\mathcal{H}$ in two different ways. One approach, the more direct one, is to use a direct counting argument involving taking a random labelling of the leaves of a Galton-Watson tree and studying the probability that the resulting labelled tree is equal to some particular $\mathcal{H} $ in $\mathbb{H}^k(m)$. We opt for an alternative approach using the recent extension of Fa\`a di Bruno's formula that we hope will clarify several structural aspects of our formulas. Indeed, adapting Theorem 2.1 of Johnston and Prochno \cite{JP} by using the following Remark 2.2 of that paper, we see that if $f:\mathbb{R}^d \to \mathbb{R}$ and $F^{(1)},\ldots,F^{(m-1)}: \mathbb{R}^d \to \mathbb{R}^d$ are smooth functions, then for a multi-index $k$ their $k^{\text{th}}$ derivative satisfies
\begin{align} \label{eq:fdb2}
D^k \left[ f \circ F^{(1)} \circ \ldots \circ F^{(m)} \right] (\lambda) = \sum_{ \mathcal{H} \in \mathbb{H}^k(m) } D^\rho[f] \circ F^{[0,m-1]} (\lambda)  \prod_{ \ell = 0}^{m-1} \prod_{ v \in V^{(\ell)} } D_{\tau(v)}^{\mu(v) } [ F^{(\ell) } ] \circ F^{[ l+1,m]}
\end{align}
where $F^{[i,j]} := F^{(i)} \circ \ldots \circ F^{(j)}$ if $j \geq i$, and is equal to the identity map otherwise. Now consider writing the map $\lambda \mapsto e^{ -  \langle x, u(T,\lambda) \rangle }$ in terms of the chain composition 
\begin{align*}
e^{ - \langle x , u(T,\lambda) \rangle } = f_x \circ F^{(1)} \circ \ldots \circ F^{(m-1)},
\end{align*}
where $f_x(\lambda) = e^{ - \langle x, \lambda \rangle }$ and $F^{(\ell)} (\lambda ) := u( \Delta t_\ell , \lambda)$. It follows by \eqref{eq:fdb2} and \eqref{eq:energy2} that 
\begin{align} \label{eq:partition}
(-1)^k D^k e^{ - \langle x , u(T,\lambda)  \rangle} = e^{ - \langle x, u(T,\lambda) \rangle}  \sum_{ \mathcal{H} \in \mathbb{H}^k(m) }  \mathsf{E}_{x,\mathsf{t}} \left( \mathcal{H} , \lambda \right). 
\end{align}
On the other hand, we note that by definition, we may write $\{ \S = k\}$ in terms of the disjoint union
\begin{align} \label{eq:disjoint}
\{ \S = k \} = \bigcup_{ \mathcal{H} \in \mathbb{H}^k(m)  } \{ \mathrm{For}_{\mathsf{t}}(\textbf{b})= \mathcal{H} \}.
\end{align}
In particular, combining \eqref{eq:disjoint} with Corollary \ref{cor:tree}, on the one hand we obtain
\begin{align} \label{sibelius}
\Q_x^{ \lambda, T} \left( \S = k \right) = \lambda^k e^{ - \langle x, u(T,\lambda) \rangle} \sum_{ \mathcal{H} \in \mathbb{H}^k(m)  }C_\mathcal{H}  \mathsf{E}_{x,\mathsf{t}} \left( \mathcal{H} , \lambda \right). 
\end{align}
On the other hand, a direct computation using Poisson processes and interchanging differentiation and expectation (which is justified by Lemma \ref{techlem}), we have
\begin{align} \label{eq:mahler}
\Q_x^{ \lambda, T} \left( \S = k \right) &= P_x  \frac{ \lambda^kZ(T)^k }{ k!}  e^{ - \langle \lambda, Z(T) \rangle} \nonumber \\
&= \frac{ (-1)^k \lambda^k }{ k!} D^k e^{ - \langle x, u(T,\lambda) \rangle } \nonumber \\
&= \frac{ \lambda^k}{k!} e^{ - \langle x, u(T,\lambda) \rangle}  \sum_{ \mathcal{H} \in \mathbb{H}^k(m) }  \mathsf{E}_{x,\mathsf{t}} \left( \mathcal{H} , \lambda \right),
\end{align}
where the final equality above follows from \eqref{eq:partition}. In order for \eqref{eq:mahler} and \eqref{sibelius} to hold for all $\lambda, \mathsf{t}$, it must be the case that $C_{\mathcal{H}} = 1/k!$, thereby completing the proof of Theorem \ref{thm:CSBP1}.

\section{Proofs of the local structure of CSBP coalescence} \label{sec:local proof}

\subsection{Small time asymptotics of CSBP genealogy and $\Lambda$-coalescents} \label{sec:smalltime}
Recall that for a forest $\mathcal{H}$ in $\mathbb{H}^k(1)$, the energy across a mesh $(0,t)$ is given by 
\begin{align} \label{eq:special2}
\mathsf{E}_{(0,t),x} ( \mathcal{H} , \lambda ) := (-1)^{k- \rho} x^\rho \prod_{ v \in V^{(0)} } D_{ \tau(v) }^{\mu(v)}[u] (t, \lambda), \qquad \text{$\mathcal{H}$ in $\mathbb{H}^k(1)$}.
\end{align}
(This is the special case $m=1$ with $t_1 = t$ of the formula \eqref{eq:energy2} for the energy of the tree.) Recall from Definition \ref{def:special} the forest $\mathcal{H}^k_{c,\alpha}$ of $\mathbb{H}^k(1)$ in which all but one tree is a stick (a stick being a tree with a single leaf, in which the root has the same type as the leaf), and the single non-stick tree has root type $c$ and leaf types $\alpha$. We note that the energy of $\mathcal{H}^k_{c,\alpha}$ is given by
\begin{align} \label{eq:special3}
\mathsf{E}_{(0,t),x} ( \mathcal{H}^k_{c,\alpha}  , \lambda ) := (-1)^{|\alpha| - 1}  x^{k- \alpha + \mathbf{e}_c} D_c^\alpha [u](t,\lambda) \prod_{j=1}^d D_j^{\mathbf{e}_j}[u](t,\lambda)^{k_j - \alpha_j}.
\end{align}
In particular, plugging \eqref{eq:special3} into Theorem \ref{thm:CSBP2}, we have  
\begin{align} \label{eq:special4}
&\mathbb{P}_x^{k,t} \left( \mathrm{For}_{(0,t)}(\textbf{b}) = \mathcal{H}^k_{c,\alpha} , \mathcal{Z}(t) \succ k  \right) \nonumber \\
&=  (-1)^{|\alpha| - 1} \frac{x^{k- \alpha + \mathbf{e}_c}}{k!}  \int_{\mathbb{R}_{ \geq 0}^d } \pi^k( \mathrm{d} \lambda) \lambda^k e^{ - \langle x, u(t,\lambda) \rangle}  D_c^\alpha [u](t,\lambda) \prod_{j=1}^d D_j^{\mathbf{e}_j}[u](t,\lambda)^{k_j - \alpha_j}.
\end{align}
The following lemma gives a first characterisation of the small-$t$ asymptotics of the integral in \eqref{eq:special4} in terms of the branching mechanism. 

\begin{lem} \label{lem:swap}
We have the first integral equation 
\begin{align*}
\lim_{t \to 0} \frac{1}{t} \mathbb{P}_x^{k,t} \left( \mathrm{For}_{(0,t)}(\textbf{b}) = \mathcal{H}^k_{c,\alpha} , \mathcal{Z}(t) \succ k \right) =  (-1)^{|\alpha|}  \frac{ x^{ k - \alpha + \mathbf{e}_c} }{k!}  \int_{ \mathbb{R}_{ \geq 0}^d } \pi^k( \mathrm{d} \lambda)  \lambda^k  e^{ - \langle x, \lambda \rangle } D_c^\alpha [ \psi]( \lambda).
\end{align*}
\end{lem}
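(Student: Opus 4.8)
The plan is to start from the explicit formula \eqref{eq:special4} for $\mathbb{P}_x^{k,t}\left(\mathrm{For}_{(0,t)}(\textbf{b})=\mathcal{H}^k_{c,\alpha},\mathcal{Z}(t)\succ k\right)$ and to show that one may pass the operation $\lim_{t\to0}\frac{1}{t}$ through the integral $\int_{\mathbb{R}_{\geq0}^d}\pi^k(\mathrm{d}\lambda)$, thereby reducing the statement to a pointwise small-$t$ expansion of the integrand. Writing the integrand as $\lambda^k e^{-\langle x,u(t,\lambda)\rangle}D_c^\alpha[u](t,\lambda)\prod_{j=1}^d D_j^{\mathbf{e}_j}[u](t,\lambda)^{k_j-\alpha_j}$, the first observation is that it vanishes identically at $t=0$: since $u(0,\lambda)=\lambda$ we have $D_c^\alpha[u](0,\lambda)=D^\alpha\lambda_c$, which is zero because $\alpha$ is a non-zero multi-index different from $\mathbf{e}_c$. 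Consequently the quotient by $t$ converges to the $t$-derivative of the integrand at $t=0$.

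Next I would compute this $t$-derivative. By the product rule, and because the factor $D_c^\alpha[u](t,\lambda)$ vanishes at $t=0$, the only surviving term is the one in which this factor is differentiated; the remaining factors are simply evaluated at $t=0$, where $e^{-\langle x,u(0,\lambda)\rangle}=e^{-\langle x,\lambda\rangle}$ and $D_j^{\mathbf{e}_j}[u](0,\lambda)=\partial_{\lambda_j}\lambda_j=1$. To differentiate $D_c^\alpha[u]$ I use the defining PDE: $\partial_t D_c^\alpha[u](t,\lambda)=D^\alpha\partial_t u_c(t,\lambda)=-D^\alpha[\psi_c(u(t,\lambda))]$, and evaluating at $t=0$ (where $u(0,\cdot)$ is the identity map) gives $-D^\alpha\psi_c(\lambda)=-D_c^\alpha[\psi](\lambda)$. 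Thus pointwise in $\lambda$ the integrand divided by $t$ tends to $-\lambda^k e^{-\langle x,\lambda\rangle}D_c^\alpha[\psi](\lambda)$, and multiplying by the prefactor $(-1)^{|\alpha|-1}x^{k-\alpha+\mathbf{e}_c}/k!$ from \eqref{eq:special4} produces exactly the claimed right-hand side, the sign $(-1)^{|\alpha|-1}\cdot(-1)=(-1)^{|\alpha|}$ working out correctly.

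The main obstacle is justifying the interchange of the limit and the integral, that is, a dominated-convergence argument uniform in small $t$. Here I would use the integral representation $D_c^\alpha[u](t,\lambda)=-\int_0^t D^\alpha[\psi_c(u(s,\lambda))]\,\mathrm{d}s$, which yields the bound $|D_c^\alpha[u](t,\lambda)|/t\leq\sup_{0\leq s\leq t}|D^\alpha[\psi_c(u(s,\lambda))]|$. Expanding $D^\alpha[\psi_c(u(s,\lambda))]$ via the multivariate Fa\`a di Bruno formula of \cite{JP} reduces matters to controlling the derivatives $D^\gamma\psi_c$ together with the $\lambda$-derivatives of the components of $u$. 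The key analytic input is that the derivatives of the branching mechanism are bounded: from the L\'evy--Khintchine representation \eqref{eq:lkrep} one has, for $|\gamma|\geq2$, the identity $D^\gamma\psi_c(\theta)=(-1)^{|\gamma|}\int_{\mathbb{R}_{\geq0}^d}r^\gamma e^{-\langle\theta,r\rangle}\nu_c(\mathrm{d}r)$ up to the constant $2\beta_c$ when $\gamma=2\mathbf{e}_c$, with the drift affecting only first-order derivatives; these are controlled using the integrability condition \eqref{eq:integrability}. Combining these bounds with the exponential decay furnished by $e^{-\langle x,u(t,\lambda)\rangle}$ and the contracting factors $D_j^{\mathbf{e}_j}[u](t,\lambda)^{k_j-\alpha_j}$ should produce a dominating function integrable against $\lambda^k\pi^k(\mathrm{d}\lambda)$; that the target integral is itself finite (its integrand $\lambda^k e^{-\langle x,\lambda\rangle}D_c^\alpha[\psi](\lambda)$ has precisely this integrable structure, using $x\succ k$) confirms that the bound is of the right order. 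I expect the careful verification of this domination across the full range $\lambda\in\mathbb{R}_{\geq0}^d$, and in particular in the tail $\lambda\to\infty$ where the decay of $e^{-\langle x,u(t,\lambda)\rangle}$ must be quantified uniformly in small $t$, to be the technical heart of the argument.
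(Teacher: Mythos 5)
Your proposal follows essentially the same route as the paper's proof: start from \eqref{eq:special4}, note the integrand vanishes at $t=0$, compute its $t$-derivative there via the product rule, the PDE \eqref{csbp pde multi}, and the initial conditions $D_j^{\mathbf{e}_j}[u](0,\lambda)=1$, $D_c^\alpha[u](0,\lambda)=0$, and then pass $\lim_{t\to 0}\frac{1}{t}$ through the $\pi^k$-integral. The paper justifies that interchange only by asserting joint continuous differentiability of $H(t,\lambda)$ and differentiating through the integral, so your dominated-convergence sketch is, if anything, more explicit about that technical step; your sign bookkeeping, $(-1)^{|\alpha|-1}\cdot(-1)=(-1)^{|\alpha|}$, also matches the lemma exactly (and silently corrects the sign slip in the paper's displayed equation \eqref{eq:zero}).
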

\begin{proof}
Letting $H(t) := \mathbb{P}_x^{k,t} \left( \mathrm{For}_{(0,t)}(\textbf{b}) = \mathcal{H}^k_{c,\alpha}    \right)$, since $H(0) = 0$ we would like to compute $\frac{ \partial}{ \partial t} H(t) \big|_{t = 0}$. Now using the definition of $\pi^k( \mathrm{d} \lambda)$ (see Theorem \ref{thm:po c} and the preceding equation \eqref{solo def}) we have
\begin{align*}
\pi^k( \mathrm{d} \lambda) \lambda^k = C_k \prod_{ i : k_i = 0 } \delta_0(  \mathrm{d} \lambda_i)  \prod_{ j : k_j > 0 } \lambda^{k_j - 1}   \mathrm{d} \lambda_j  
\end{align*}
where $C_k = \prod_{ i : k_i > 0} k_i$. In particular, by \eqref{eq:special4} we have 
\begin{align*}
H(t) = (-1)^{|\alpha|-1}  C_k \frac{x^{ k - \alpha + \mathbf{e}_c}}{k!}  \int_{\mathbb{R}_{ \geq 0}^d }  \prod_{ i : k_i = 0 } \delta_0(  \mathrm{d} \lambda_i)  \prod_{ j : k_j > 0 } \mathrm{d} \lambda_j   \lambda_j^{k_j - 1}    H(t,\lambda),
\end{align*}
where $H(t,\lambda)$ is given by 
\begin{align} \label{eq:H}
H(t,\lambda) :=  e^{ - \langle x, u(t,\lambda) \rangle }D_c^\alpha [u](t,\lambda) \prod_{j=1}^d D_j^{\mathbf{e}_j}[u](t,\lambda)^{k_j - \alpha_j}.
\end{align} 
The integrand $H(t,\lambda)$ is continuously differentiable in $(t,\lambda)$ in the set $\mathbb{R}_{ \geq 0} \times \{ \lambda \in \mathbb{R}_{ \geq 0}^d : \lambda \prec k \}$ (where, as above, $\lambda \prec k$ means $\lambda_i = 0$ whenever $k_i = 0$). In particular, we may differentiate through the integral
\begin{align*}
\frac{ \partial}{ \partial t} H(t) =  (-1)^{|\alpha|-1}  C_k \frac{x^{ k - \alpha + \mathbf{e}_c}}{k!}  \int_{\mathbb{R}_{ \geq 0}^d }  \prod_{ i : k_i = 0 } \delta_0(  \mathrm{d} \lambda_i)  \prod_{ j : k_j > 0 } \mathrm{d} \lambda_j   \lambda_j^{k_j - 1} \frac{ \partial}{ \partial t} H(t,\lambda) 
\end{align*}
The proof follows after we establish the following equation:
\begin{align} \label{eq:zero}
\frac{ \partial}{ \partial t } H(\lambda, t) \big|_{t = 0 } = e^{ - \langle x ,\lambda \rangle } D_c^\alpha [ \psi] ( \lambda). 
\end{align}
To prove \eqref{eq:zero}, recall from \eqref{csbp pde multi} that $u(t,\lambda)$ is smooth in $(t,\lambda)$ and solves the partial differential equation
\begin{align} \label{eq:q1}
\frac{ \partial}{ \partial t} u(t,\lambda) = - \psi \left( u(t,\lambda) \right) \qquad u(0,\lambda) = \lambda,
\end{align} 
so that in particular, 
\begin{align} \label{eq:q2}
D_i^{\mathbf{e}_i} [ u ] (t,\lambda) \Bigg|_{t = 0 } = 1 \qquad \text{and} \qquad D_c^\alpha [ u] (t,\lambda) \Bigg|_{t = 0 } = 0 \text{ when $\alpha \neq \mathbf{e}_c$}.
\end{align}
Now by interchanging differential operators $\frac{ \partial}{ \partial t}$ and $D^\alpha$, we have 
\begin{align} \label{eq:q4} 
\frac{ \partial}{ \partial t} D^\alpha_c [u](t,\lambda)  \Bigg|_{t = 0 }= - D^\alpha_c [ \psi] (\lambda).
\end{align}
Now differentiate $H(\lambda,t)$ in \eqref{eq:H} using the product rule. Appealing to \eqref{eq:q1}, \eqref{eq:q2} and \eqref{eq:q4}, we obtain \eqref{eq:zero}, completing the proof of Lemma \ref{lem:swap}.
\end{proof}

We are now ready to prove Theorem \ref{thm:rate}, which expresses the integral in Lemma \ref{lem:swap} in terms of an integral against a measure on $[0,1]^d$ depending on the branching mechanism. Recall that the  $c^{\text{th}}$ component of the branching mechanism is given by 
\begin{align} \label{eq:lkrep2}
\psi_c(\la) = - \sum_{ j = 1}^d \kappa_{c,j} \lambda_j + \beta_c \lambda_c^2 + \int_{\mathbb{R}_{\geq 0}^d} \left( e^{ - \langle \la , r \rangle } - 1 +  \lambda_i r_i \ind_{r_i \leq 1} \right) \nu_c(\mathrm{d}r).
\end{align}                                                                                                                                                                                                                                                                                                                                                                                                                                                                                                                                                                                                                                                                                                                                                                                                                                                                                                                                                                                                                                                                                                                                                                                                                                                                                                                                                                                                                                                                                                                                                                                                                                                                                                                                                                                                                                                                                                                                                                                                                                                                                                                                                                                                                                                                                                                                                                                                                                                                                                                                                                                                                                                                                                                                                                                                                                                                                                                                                                                                                                                                                                                                                                                                                                                                                                                                                                                                                                                                                                                                                                                                                                                                                                                                                                                                                                                                                                                                      
We also recall the associated measure on $[0,1)^d$ defined by
\begin{align} \label{pitman measure}
\Lambda^\psi_c(x,\mathrm{d}s) := \frac{ 2 \beta_c }{ x_c } \delta_0 +x_c s_c^2 T_x^\# \nu_c  ( \mathrm{d}s),
\end{align} 
where $T_x^{ \# }\nu_c$ is the pushforward of $\nu$ under the map $T_x:\mathbb{R}_{ \geq 0}^d \to [0,1)^d$ with components $T_x(r)_i := \frac{ r_i}{ x_i + r_i}$. 

Finally, in the proof of Theorem \ref{thm:rate} we will require the following property of integration against $ \pi^k(  \mathrm{d} \lambda) $, which is easily proved by using the definition of the gamma integral. 
\begin{align} \label{eq:gamma fact}
 \int_{ \mathbb{R}_{\geq 0}^d } \pi^k(  \mathrm{d} \lambda) \lambda^k e^{ - \langle y, \lambda \rangle } = \frac{k!}{y^k} \qquad \text{for $y \in \mathbb{R}_{ > 0}^d$.}
\end{align}

\bp[Proof of Theorem \ref{thm:rate}]
Let $c \in \{1,\ldots,d\}$ and let $\alpha \neq 0,\mathbf{e}_c$ be a multi-index. By virtue of Lemma \ref{techlem} we may differentiate through the integral in the L\'evy-Khintchine representation \eqref{eq:lkrep2}, so that 
\begin{align} \label{big alpha}
D^{\alpha}_c [ \psi ] (\lambda) =  - \sum_{ j = 1}^d \kappa_{c,j} \ind_{ \alpha = \mathbf{e}_j } + 2 \beta_c \ind_{\alpha = 2 \mathbf{e}_c} + (-1)^{|\alpha|} \int_{\mathbb{R}_{ \geq 0}^d} r^\alpha e^{ - \langle \lambda , r \rangle} \nu_c( \mathrm{d} r). 
\end{align}
Using \eqref{big alpha} and Lemma \ref{lem:swap} to obtain the first equality below, and Fubini's theorem to obtain the second, we have 
\begin{align*}
&\lim_{t \to 0} \frac{1}{t} \mathbb{P}_x^{k,t} \left( \mathrm{For}_{(0,t)}(\textbf{b}) = \mathcal{H}^k_{c,\alpha} , \mathcal{Z}(t) \succ k  \right) \\ 
&= \frac{ x^{k- \alpha + \mathbf{e}_c } }{k!} \int_{ \mathbb{R}_{ \geq 0 }^d } \pi^k (  \mathrm{d} \lambda) \lambda^k e^{ - \langle \lambda, x \rangle }  \left( - \sum_{ j = 1}^d \kappa_{c,j} \ind_{ \alpha = \mathbf{e}_j } + 2 \beta_c \ind_{\alpha = 2 \mathbf{e}_c} + \int_{ \mathbb{R}_{ \geq 0 }^d } r^\alpha e^{ - \langle \lambda , r \rangle } \nu_c (\mathrm{d}r) \right)\\
&= \frac{ x^{k- \alpha + \mathbf{e}_c } }{k!} \left( - \sum_{ j = 1}^d \kappa_{c,j} \ind_{ \alpha = \mathbf{e}_j } + 2 \beta_c \ind_{\alpha = 2 \mathbf{e}_c} \right)   \int_{ \mathbb{R}_{ \geq 0 }^d } \pi^k (  \mathrm{d} \lambda) \lambda^k e^{ - \langle \lambda, x \rangle }\\
& +                \frac{ x^{k- \alpha + \mathbf{e}_c } }{k!} \int_{ \mathbb{R}_{ \geq 0}^d } \nu_c( \mathrm{d} r) r^\alpha \int_{\mathbb{R}_{ \geq 0}^d} \pi^k ( \mathrm{d} \lambda) \lambda^k e^{ - \langle x +r, \lambda \rangle } .
\end{align*}  
Using the gamma integral identity \eqref{eq:gamma fact}, this simplifies to 
\begin{align*}
&\lim_{t \to 0} \frac{1}{t} \mathbb{P}_x^{k,t} \left( \mathrm{For}_{(0,t)}(\textbf{b}) = \mathcal{H}^k_{c,\alpha} , \mathcal{Z}(t) \succ k   \right) \\
&= \frac{ x^{k- \alpha + \mathbf{e}_c } }{k!} \left( - \sum_{ j = 1}^d \kappa_{c,j} \ind_{ \alpha = \mathbf{e}_j } + 2 \beta_c \ind_{\alpha = 2 \mathbf{e}_c} \right)  \frac{k!}{x^k}  +      \frac{ x^{k- \alpha + \mathbf{e}_c } }{k!} \int_{ \mathbb{R}_{ \geq 0}^d } \nu_c( \mathrm{d} r) r^\alpha \frac{k!}{ (x+r)^k} \\
&=- \sum_{ j = 1}^d \kappa_{c,j} \ind_{ \alpha = \mathbf{e}_j } \frac{x_c}{x_j} + 2 \beta_c \ind_{\alpha = 2 \mathbf{e}_c} \frac{1}{x_c} + x_c \int_{ \mathbb{R}_{ \geq 0}^d } \nu_c( \mathrm{d} r)  \left( \frac{ r }{ x+ r } \right)^\alpha \left( \frac{ x}{ x+r} \right)^{k-\alpha}\\
&=- \sum_{ j = 1}^d \kappa_{c,j} \ind_{ \alpha = \mathbf{e}_j } \frac{x_c}{x_j} + 2 \beta_c \ind_{\alpha = 2 \mathbf{e}_c} \frac{1}{x_c} + x_c \int_{ [0,1]^d }T_x^{ \#} \nu_c( \mathrm{d} s) s^\alpha (1 -s)^{ k -\alpha},
\end{align*}  
where the final equality above follows simply from the definition of the pushforward measure $T_x^{\#} \nu_c$. That completes the proof of Theorem \ref{thm:rate}. 
\ep

\section*{Acknowledgements}
The first author was supported in the first part by the ERC grant \emph{Integrable Random Structures} and in the latter part by the FWF grant \emph{Asymptotic Geometric Analysis and Applications}.


\end{document}